\documentclass[12pt]{amsart}
\usepackage[T1]{fontenc}
\usepackage{tikz-cd}
\usepackage{amsmath}
\usepackage{calligra}
\usepackage{amssymb}
\usepackage{amsthm}
\usepackage{xcolor}
\usepackage{appendix}
\usepackage[margin=1.2in]{geometry}
\usepackage{hyperref}
\usepackage{mathrsfs} 
\usepackage{yfonts}
\usepackage{egothic}
\usepackage{aurical}
\usepackage{enumitem}

\usepackage{pifont}

\newtheorem*{thm-non}{Theorem}
\newtheorem{thm}{Theorem}
\numberwithin{thm}{subsection}
\newtheorem{dfn}{Definition}
\numberwithin{dfn}{subsection}
\newtheorem{prop}{Proposition}
\numberwithin{prop}{subsection}
\newtheorem{cor}{Corollary}
\numberwithin{cor}{subsection}
\newtheorem{lem}{Lemma}
\numberwithin{lem}{subsection}
\newtheorem{rem}{Remark}

\newtheorem{D-prop}{Definition-Proposition}

\newcommand{\con}[1]{{\stackrel{\scriptscriptstyle{#1}}{\nabla}}\phantom{}}

\newcommand{\D}{\mathscr{D}}

\newcommand{\F}{\mathscr{F}}
\newcommand{\ft}{\frak{t}}

\newcommand{\fg}{\frak{g}}

\DeclareMathAlphabet{\mathcalligra}{T1}{calligra}{m}{n}

\newcommand{\C}{\mathbb{C}}
\newcommand{\R}{\mathbb{R}}
\newcommand{\hH}{\mathbb{H}}
\newcommand{\oO}{\mathbb{O}}
\newcommand{\K}{\mathbb{K}}

\newcommand{\cJ}{\mathcal{J}}

\newcommand{\cT}{\mathcal{T}}

\setcounter{section}{-1}

\title[LG models, Monge-Ampère domains and (pre-)Frobenius manifolds]
{Landau--Ginzburg models, Monge--Ampère domains and (pre-)Frobenius manifolds}\author{Noemie C. Combe}
\begin{document}

\begin{abstract}

Kontsevich suggested that the Landau–Ginzburg (LG) model presents a good formalism for homological mirror symmetry. In this paper, we propose to investigate the LG theory from the viewpoint of Koopman–von Neumann's  construction. 
New advances are thus provided, namely regarding a conjecture of Kontsevich--Soibelman (on a version of the Strominger--Yau--Zaslow mirror problem). We show that there exists a Monge--Ampère domain $\mathscr{Y}$, generated by a space of density of probabilities, parametrising mirror dual Calabi--Yau manifolds. This provides torus fibrations over $\mathscr{Y}$. The mirror pairs are obtained via the Berglund--Hübsch--Krawitz construction. We also show that Monge--Ampère manifolds are potential pre-Frobenius manifolds. Our method allows to recover certain results concerning Lagrangian torus fibrations. We illustrate our construction on a concrete toy model, which allows us, additionally, to deduce a relation between von Neumann algebras, Monge--Ampère manifolds and pre-Frobenius manifolds.

\end{abstract}
\maketitle
\setcounter{tocdepth}{1}

\medskip 

{{\bf Keywords:}  Affine differential geometry, Calabi--Yau manifolds, Monge--Ampère equation, Symmetric spaces, Frobenius manifolds}


{\bf 2020 Maths Subject Classification:} Primary: 53A15, 53C35, 35J96, 35Q99,14J33, 53D45 Secondary: 70GXX

\section{Introduction} 
Kontsevich suggested that the Landau–Ginzburg (LG) model presents a good formalism for homological mirror symmetry~\cite{Kont98}. In this paper, we propose to consider the LG theory from the standpoint of the construction of Koopman--von  Neuman (KvN). This allows us to obtain advances regarding the conjecture~\cite{KoS01} on a version of the Strominger--Yau--Zaslow mirror conjecture and to recover certain results on Lagrangian torus fibrations such as depicted in \cite{AAK}. 

The  Kontsevich--Soibelman conjecture asserts that in the limit, both mirror dual manifolds $X$ and $X^\vee$ become fiber bundles with toroidal fibers, over the same base $\mathscr{Y}$. The latter is based on a version of the Strominger--Yau--Zaslow (SYZ) conjecture~\cite{SYZ}, suggesting a certain duality between torus fibrations. 
 
~
 Using LG theory/LG models {\it \`a la} Koopman-von Neumann, we show that there exists a Monge--Ampère domain $\mathscr{Y}$ parametrising mirror dual Calabi--Yau manifolds $X$ and $X^\vee$ producing Lagrangian torus fibrations over $\mathscr{Y}$  (Theorem~\ref{T:FINAL}). The mirror pairs are obtained by the Berglund--Hübsch--Krawitz construction; the Monge--Ampère domain is a given by a space of density of probabilities.

The  difference between Koopman-von Neumann(KvN) version of LG theory and the LG model defined in \cite{AAK,AAEKO,Cecotti,CR} relies in the fact that the KvN--LG theory provides a Hilbert space, which corresponds to the space of states, and this Hilbert space generates a space of probability densities. The KvN--LG theory implies the LG model but the converse is not true. Hence, our construction allows to recover results on LG models related with the  Lagrangian torus fibrations.  As a corollary of our result, we are able to recover results ~\cite[Sec. 2--3]{AAK} in a completely different way. 

{\it $\diamond$ By an abuse of language,  when we mention the LG theory it means the LG theory from Koopman-von Neumann's  viewpoint.}

~

 We cite some related  works of \cite{AAK,AAEKO,KoS01,LW,Man98,SYZ,BH,K}  where LG models, Monge--Ampère manifolds and torus fibrations are used in a Homological Mirror Symmetry perspective.

\subsubsection{}
In this paper, on the one hand, we prove that using LG theory and Koopman-von Neumann's approach, one can show that
the weighted projective space, corresponding to the Hilbert space of states, is parametrised by a real Monge--Ampère manifold, forming a pre-Frobenius manifold. As a result, it follows that a pair of mirror-dual Calabi--Yau manifolds can be parametrised by one Monge--Ampère manifold, forming Lagrangian torus fibrations. 
All our results are proved using methods of {\it affine geometry}.

\smallskip

\subsubsection{} On the other hand, the LG theory provides a state space of an $n$-dimensional quantum system, represented as the set of all $n\times n$ positive semidefinite complex matrices of trace 1 known as density matrices. We consider an enriched version of this object, by allowing the space of all $n\times n$ positive definite matrices to have coefficients in a division algebra (without restrictions on the trace) and show that this space is not only a potential pre-Frobenius domain but also an Elliptic Monge--Ampère domain. This domain contains a Frobenius manifold generated by an algebraic torus. 

The considered model has many applications. For instance by taking the real cone, we provide a Monge--Ampère domain and this space parametrises complex tori, forming the simplest example of Calabi--Yau manifolds. This is reminiscent to the construction \`a la Strominger--Yau--Zaslow (SYZ)  in \cite[Sec. 8.3]{KoS01}. From our results, it follows that the complex cone provides a bridge from von Neumann Algebras to Monge--Ampère manifolds and to Frobenius manifolds.

\subsubsection{}
We mark a terminological difference between the LG {\it theory} and LG {\it model}. In this paper, LG theory refers to the original construction given by Landau and Ginzburg for superconductivity, expressed using the approach of Koopman-von Neuman. The LG model  refers to developments of \cite{Vafa,Cecotti,CR,Chiodo,LW} and many others.

\subsubsection{}
In this article, we adopt the definition of Frobenius manifolds given in  \cite[p.19]{Man99}, where a Frobenius manifold is a potential pre-Frobenius manifold satisfying the associativity condition. Such a framework of Frobenius manifolds inscribes itself as a continuation of the vision started by Yu. Manin in \cite[p.19-20]{Man99} and forms a continuation for the Hessian geometry school \cite{Shi84,SY,Kito,To04}. 

We highlight that investigating relations between sources of Frobenius manifolds is part of the mirror problem \cite[p.3]{Man98}: 
`` {\it Isomorphisms of Frobenius manifolds of different classes remain the most direct expression, although by no means the final one, of various mirror phenomena. From this vantage point, [...]  one looks for isomorphisms between Frobenius manifolds (and their submanifolds) constructed by different methods.}"

Therefore, a statement showing that the LG theory/LG model forms a certain bridge between classes of Frobenius manifolds is fundamental. 

\subsection{Main result}
We demonstrate that there exists a real Monge--Ampère manifold  $\mathscr{Y}$ parametrising a Hilbert space $ \mathfrak{H}$, obtained from the Landau–Ginzburg theory and Koopman--von Neumann theory. This construction $\pi: \mathfrak{H}\to\mathscr{Y}$ generates weighted projective spaces and a torus fibration $\pi: \mathfrak{H}\to \mathscr{Y}$. From well-known works \cite{Vafa,Cecotti}, resulting in the famous Landau--Ginzburg/Calabi--Yau correspondence, one can construct Calabi--Yau manifolds/orbifolds in weighted projective spaces.  
Relying on this, we are able to show that there exist pairs of mirror-dual Calabi--Yau manifolds/orbifolds $(X,X^\vee)$, lying in their respective weighted projective space and parametrized by a real Monge--Ampère manifold  $\mathscr{Y}$, where $2dim_\R(\mathscr{Y})=dim_\R(X)$ such that this construction provides a torus fibration: 
\[
\begin{tikzcd}
&\mathfrak{H} \arrow[dr]\arrow[dl] &\\
X \arrow[dr,"Torus \, fiber"]\arrow[rr,"Mirror\, pair"] & & X^\vee \arrow[ll] \arrow[dl] \\
&\mathscr{Y}  &
\end{tikzcd}
\]

The mirror pairs are constructed via the Berglund--Hübsch--Krawitz~\cite{BH,K} method  and the Theorem 4 of Chiodo--Ruan~\cite{CR}. The Monge-Ampère domain is a space of probability densities. This space also satisfies the axioms of a potential pre-Frobenius domain.

 The sketch of proof is the following. 

\subsection{Sketch of proof}

\begin{enumerate} [label=\roman*., itemsep=0pt, topsep=0pt]
\item In Sec.~\ref{S:Hessian} we introduce Hessian manifolds because it allows to define Frobenius manifolds. Indeed, we show that affine Hessian manifolds satisfy the Associativity Equation if they are flat, see Lemma \ref{L:1.5.1} and Proposition~\ref{P:Codazzi1}.

\item In Sec. \ref{S:2}, we discuss the geometrization of the WDVV equation, playing an important role in the notion of Frobenius manifolds. Through the notion of {\it potential pre-Frobenius manifolds} we discuss an intermediate structure, which is formed from manifolds satisfying the five first axioms of a Frobenius manifold, the last axiom is not necessary (associativity condition). In this sense, a Frobenius manifold is a potential pre-Frobenius manifold, but the converse is not true. We show that a Hessian manifold forms a class of potential pre-Frobenius manifolds.

\item In Sec.\ref{S:PDE} we introduce Monge–Ampère domains/manifolds and in Theorem \ref{T:main} prove that the Elliptic Monge--Ampère manifold forms a {\it potential pre-Frobenius manifolds}. This leads us to a discussion on the existence of isometrically immersed Frobenius (sub)manifolds in an  Elliptic Monge--Ampère manifold proved in Theorem~\ref{T:preF} and Proposition~\ref{P:Codazzi2}. Monge--Ampère manifolds are present in \cite[Sec.3.2]{KoS01}, in the scope of studying the geometric mirror symmetry {\it à la} Strominger--Yau--Zaslow (SYZ).
\item In Sec.~\ref{S:LG1}, we investigate the Landau--Ginzburg theory from the viewpoint  of Koopman-von Neuman\cite{Koo,vN} as it turns out to be useful to prove our statements. The LG theory comes from the BCS superconductors and relativistic extension of abelian Higgs models. They attract the attention of both Condensed Matter and High Energy communities.

On the other side, a Landau-Ginzburg model is summarised as a pair $(X,W)$ consisting of a non-compact K\"ahler manifold $X$ and a holomorphic Morse function $W : M \to \C$, called superpotential. The LG model applies to the cases of Saito spaces, $L^2$-Hodge structures and Calabi--Yau manifolds~\cite{Cecotti,Chiodo,CR,LW,Wil,To04,Vafa}.

Using the original version of the LG theory and Koopman-von Neumann's construction, we show in Theorem \ref{T:LG} that there exists a real Monge--Ampère domain parametrising a weighted projective space (the latter is provided by the LG theory). In particular, there exists a torus bundle $\pi:\mathfrak{H}\to \mathscr{Y}$, where $\mathscr{Y}$ is a real Monge–
Ampère domain and $\mathfrak{H}$ is the Hilbert space coming from the LG theory. The space $\mathfrak{H}$ is formed from square integrable functions with respect to a density function, defined over a phase space. The Monge--Ampère domain is a space of density of probabilities. 

Applying Theorem \ref{T:main} we deduce that the weighted projective space, provided by the LG theory, is parametrised by a  potential pre-Frobenius manifold, see Corollary \ref{T:END}. In particular, Theorem \ref{T:FINAL} implies that there exists a real Monge-Ampère domain $\mathscr{Y}$ parametrising a pair of mirror-dual Calabi--Yau hypersurfaces in their respective weighted projective space. This construction forms a torus fibration. The real dimension of the  Calabi--Yau hypersurfaces is twice the real dimension of $\mathscr{Y}$.  From our construction it follows that we can recover certain results of \cite{AAK}. 

This ends our proof. 

\end{enumerate}

\subsection{LG Toy model}\label{S:EXIN}
In Sec.~\ref{S:ToyModel1}--Sec.~\ref{S:Pre-Frob} we construct a toy model, in order to have a better understanding of the relation between the Landau--Ginzburg theory, Frobenius manifolds and Calabi--Yau manifolds in the framework of the SYZ and KS conjecture.

This toy model is given by the cone $\overline{\mathscr{P}}_n(\K)$ ($\K$ is a real division algebra) of symmetric(/hermitian) semi-positive definite matrices of size $n\times n$ with coefficients in a real division algebra $\K$. 
The real division algebra $\K$ includes real numbers $\R$, complex numbers $\C$, quaternions $\hH$ or octonions $\oO$ (however for the latter $n=3$). For simplicity, we omit the singular locus given by  $x^TAx=0$, where $A$ is a symmetric matrix given thus the open cone $\mathscr{P}_n(\K)$. We show that $\mathscr{P}_n(\K)$  are Monge--Ampère domains. This has several implications. 

\begin{enumerate}
    \item We show that the cone $\mathscr{P}_n(\R)$ is a real Monge--Ampère domain of real dimension $n$ parametrizing principally polarized tori of complex dimension $n$, which are Calabi--Yau manifolds. This furnishes an example of a torus fibaration.
    \item The complex cone $\overline{\mathscr{P}}_n(\C)$ is the space of semi-positive definite hermitian matrices. It contains a hypersurface given by the space of semi-positive definite $n \times n$ matrices of trace 1. This is a very important object in relation with the Landau--Ginzburg theory, as it is used to represent the space of quantum states of an $n$-dimensional quantum system. By showing that $\mathscr{P}_n(\C)$ is a complex Monge--Ampère domain, it implies (by YAU) that the LG model provides Calabi--Yau manifolds. 
  
    \item We prove generally that an irreducible cone $\mathscr{P}_n(\K)$ generates a Monge--Ampère domain (Proposition~\ref{P:pdsmPreF}), and that it forms a pre-Frobenius domain (Theorem \ref{T:preF}, Proposition \ref{P:pre-Fro}). From Theorem~ \ref{T:Hel}, Propositions~\ref{P:exist}--\ref{P:Matrix} it follows that they contain an isometrically immersed Frobenius manifold, being an algebraic torus (Theorem~\ref{T:Frobenius}, Corollary~\ref{T:Final}). The Frobenius manifold is given by $\mathscr{F}=\exp \tilde{\frak{a}}$, where $\tilde{\frak{a}}$ is a Cartan subalgebra (Proposition \ref{P:Matrix}). 

\end{enumerate}

\smallskip 

{$\diamond$ Whenever the context is clear we write $\mathscr{P}$ instead of  $\mathscr{P}_n(\K)$.}

\

In particular, we are interested in the following two cases: the cone $\mathscr{P}(\R)$ over $\R$ and the cone $\mathscr{P}(\C)$ over $\C$.
\begin{itemize}
\item[\ding{169}] In the first case, the cone over $\R$ illustrates the case of a Monge--Ampère domain parametrizing principally polarized tori of dimension $n$, which are Calabi--Yau manifolds. We have two different proofs of the existence of a pre-Frobenius structure on $\mathscr{P}_n(\R)$. One method relies on the fact that $\mathscr{P}_n(\R)$ forms an (elliptic) Monge--Ampère domain; the other via symmetric spaces ($\mathscr{P}_n(\R)$ can be identified with the quotient of Lie groups: $Gl_n(\R)/O_n(\R)$).


\item[\ding{169}]  In the second case (the cone over $\C$) applying the main Theorem of \cite{Connes} allows to deduce the existence of a relation connecting pre-Frobenius domains, Monge--Ampère domains and von Neumann algebras.

\end{itemize}

\smallskip 

\thanks{
{\bf Acknowledgements} This research is part of the project No. 2022/47/P/ST1/01177 co-founded by the National Science Centre and the European Union's Horizon 2020 research and innovation program, under the Marie Sklodowska Curie grant agreement No. 945339 \includegraphics[width=1cm, height=0.5cm]{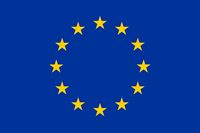}.
I am very much obliged to IHES for supporting my stay in 2022 (November--December), where stimulating conversations with Maxim Kontsevich allowed me to the create this article. I am thus very grateful to Maxim Kontsevich. I
would like to thank Andrzej Dabrowski for proof reading the sections 5 and 6 of this article. My gratitude goes to Hanna Nencka for introducing me to the superconductivity Landau Ginzburg theory à la Koopman--von Neuman. 
}

\section{Hessian manifolds, Codazzi tensors and Monge--Ampère manifolds}\label{S:Hessian}

\subsection{Affine and Hessian manifolds}\label{S:1.1}
\subsubsection{}\label{S:2.2.1}
Let $M$ be a smooth $n$-dimensional manifold. An {\it affine structure} on $M$ is defined by a collection of coordinate charts $\{U_a,\phi_a\}$, where $\{U_a\}$ is an open cover of $M$ and $\phi_a:U_a\to \R^n$  is a local coordinate system, such that the coordinate change $\phi_b\circ \phi_a^{-1}$ is an affine transformation of $\phi_a(U_a\cap U_b)$ onto $\phi_b(U_a\cap U_b)$. Manifolds with an affine structure are usually called affine manifolds or affinely flat manifolds. We highlight the following fact. 
An affine structure on $M$ induces a flat, torsionless affine connection $\con{0}$ on $M$, and reciprocally.

\begin{itemize}
\item[\ding{169}] {\it Throughout the paper: $\con{0}$ refers to  a flat, torsionless affine connection on a manifold. }
\end{itemize}

\subsubsection{Affine vector fields}
We recall the definition and properties of affine vector fields, since affine structures have already been discussed in Sec.\ref{S:2.2.1}.  In a  Frobenius manifold those affine vector fields coincide with Euler vector fields. Throughout our work the manifolds are assumed to be finite-dimensional.

In the context of affine spaces and affine manifolds, affine vector fields  appear naturally. We recall their properties. From those properties, it follows that affine vector fields coincide with the Euler vector fields in the context of Frobenius manifolds and $F$-manifolds.

Let $\mathbb{E}$ be an affine space. Let us denote the Lie algebra of vector fields on $\mathbb{E}$ by $\Upsilon(\mathbb{E})$. A vector field $E$ on $\mathbb{E}$ is {\it affine} if it generates a one-parameter group of affine transformations. The notation $E$ of the vector field refers to the {\it Euler vector field.} In Sec.~\ref{S:Euler} we will see that on a Frobenius manifold Euler vector fields are affine vector fields.

\smallskip 

Recall the following classical result. 
\begin{prop}\label{P:Euler}
The following statements are equivalent.
\begin{enumerate}
 \item $E$ is an {\it affine} vector field.
 \item $\con{0}(\con{0} E)=0$.
    \item $\con{0}_Y\con{0}_Z E=\con{0}_{\con{0}_YZ}E$ for all vector fields $Y,Z\in \Upsilon(M)$.
    \item The coefficients of $E$ are affine functions. Given, $$E=\sum_m E^{m}\partial_m$$ we have $E^{m}=a^m_j x^j + b^m$, where $a^m_j$ and $b^m$ are constants in $\R$.
\end{enumerate}
\end{prop}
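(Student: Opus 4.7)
The plan is to exploit the existence of global affine coordinates $(x^1,\ldots,x^n)$ on $\mathbb{E}$ in which $\con{0}$ has vanishing Christoffel symbols, and then establish the equivalences through the chain $(4) \Leftrightarrow (2) \Leftrightarrow (3) \Leftrightarrow (1)$. The last link is the substantive one; the other two are essentially bookkeeping in the affine chart.

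First I would dispatch $(2) \Leftrightarrow (4)$. In affine coordinates one has $(\con{0} E)^{m}{}_{k} = \partial_k E^m$ and $(\con{0}\con{0} E)^{m}{}_{jk} = \partial_j \partial_k E^m$, so the $(1,2)$-tensor in (2) vanishes identically iff each coefficient $E^m$ depends affinely on $(x^1,\ldots,x^n)$, i.e.\ $E^m = a^m_j x^j + b^m$. The equivalence $(2) \Leftrightarrow (3)$ is then a direct unpacking of the iterated covariant derivative applied to a vector field,
\[
\con{0}(\con{0} E)(Y,Z) \;=\; \con{0}_Y \con{0}_Z E - \con{0}_{\con{0}_Y Z} E,
\]
so that the vanishing of this $(1,2)$-tensor is tensorially equivalent to the identity in (3).

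The core step is $(1) \Leftrightarrow (3)$. By definition, an affine transformation of $\mathbb{E}$ is a diffeomorphism preserving the flat torsion-free connection, hence the flow of $E$ consists of affine transformations precisely when $\mathcal{L}_E \con{0} = 0$. I would then invoke the standard identity, valid for any torsion-free connection $\nabla$ and any vector field $E$,
\[
(\mathcal{L}_E \nabla)(Y,Z) \;=\; \nabla_Y \nabla_Z E - \nabla_{\nabla_Y Z} E + R(E,Y)Z,
\]
which is obtained by applying the Leibniz rule for $\mathcal{L}_E$ to $\nabla_Y Z$ together with the torsion-free relation $[Y,Z] = \nabla_Y Z - \nabla_Z Y$ and the definition of the curvature tensor. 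Specialising to $\nabla = \con{0}$ and using flatness $R \equiv 0$, the right-hand side collapses to exactly the expression whose vanishing is condition (3), closing the loop.

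The main (minor) obstacle is the Lie-derivative identity above: the sign conventions and the role of torsion-freeness must be tracked carefully, and one needs to observe that the curvature term drops out only because $\con{0}$ is flat. Once this identity is in hand, everything else is a coordinate computation in the affine chart, and the proposition reduces to the conceptual statement that flatness of $\con{0}$ converts the Killing-type symmetry equation $\mathcal{L}_E \con{0} = 0$ into the linear second-order PDE $\partial_j \partial_k E^m = 0$, whose general solution is exactly the family of affine vector fields in (4).
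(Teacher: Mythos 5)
Your proof is correct. Note, however, that the paper itself offers no proof of Proposition~\ref{P:Euler}: it is stated as a recalled classical result, so there is no argument in the text to compare yours against. Your chain $(4)\Leftrightarrow(2)\Leftrightarrow(3)\Leftrightarrow(1)$ is sound: the coordinate computation in an affine chart, the unpacking of the iterated covariant derivative, and the identity $(\mathcal{L}_E\con{0})(Y,Z)=\con{0}_Y\con{0}_Z E-\con{0}_{\con{0}_Y Z}E+R(E,Y)Z$ (with the curvature term killed by flatness) are all standard and correctly deployed; I verified the sign conventions in the Lie-derivative identity and they check out. The only point worth flagging is the implicit use of the equivalence between ``generates a one-parameter group of affine transformations'' and $\mathcal{L}_E\con{0}=0$: in general this gives only a local one-parameter group of local affine maps, but on the affine space $\mathbb{E}$ the solutions $E^m=a^m_jx^j+b^m$ are complete, so the global statement holds and the loop genuinely closes.
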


Let $M$ be an affine manifold. Denote the Lie algebra of vector fields on $M$ by $\Upsilon(M)$. 
Let $\texttt{aff}(M)$ be the space of affine endomorphisms of $M$. Then, this forms a sub Lie algebra of the Lie algebra $\Upsilon(M)$. A vector field $E$ on $M$ is {\it affine} if in local coordinates it appears as a vector field in $\texttt{aff}(M)$. 

Given $\con{0}$ an affine flat torsionless connection, corresponding to the affine structure on $M$, the vector field $E\in \Upsilon(M)$ is {\it affine}  if and only if $$\con{0}_E(X)=[E,X]=Lie_E(X)$$ which is the Lie derivative of $X$.

\subsubsection{Hessian manifolds}
A Hessian manifold $(M,g,\con{0})$ is a manifold $M$ endowed with a Hessian metric $g$ and an affine flat connection $\con{0}$. A Riemannian metric $g$ on a manifold is said to be {\it Hessian} if in terms of an affine coordinates system with respect to $\con{0}$ (that means an affine local coordinate system $(x^1 ,..., x^n)$  around a given point such that $ \con{0}(dx^i)=0$) the metric is expressed as:
\[g=\sum_{i,j}\frac{\partial^2 \Phi}{\partial x^i \partial x^j}dx^i dx^j,\]
where $\Phi$ is a local smooth real-valued function. The metric tensor is symmetric since $\Phi$ is a smooth function (this is an application of the generalised Schwartz Theorem). The function $\Phi$ is called a potential.

\subsection{Codazzi tensors}\label{S:Codazzi}
We elaborate on the construction of Hessian manifolds and discuss Codazzi tensors.

Investigations on Codazzi tensors are important since Gauss--Codazzi equations form a sufficient and necessary condition for having the integrability of a given system (see \cite{Schouten}). This leads us in the end to Frobenius manifolds.

\subsubsection{}
A major object related to those Hessian structures are the Codazzi tensors.  Let $SEnd_{TM}$ be a vector bundle, where $TM$ is the tangent bundle of $M$, which consists of smooth symmetric (0,2) tensor fields of $TM$. The space of smooth sections of $SEnd_{TM}$ is denoted $S(M)$. The tensor $\kappa \in S(M)$ is a Codazzi tensor if:

\[\nabla_X\kappa(Y,Z)=\nabla_Y\kappa(X,Z),\] where $X,Y,Z \in TM$ and $\nabla_Y\kappa$ is a covariant derivative. The set of Codazzi tensors forms a vector bundle over $M$. 

\subsubsection{} For example, the Riemannian metric $g$ fulfils the Codazzi condition above, since one satisfies:

 \[\nabla_Xg(Y,Z)=\nabla_Yg(X,Z),\] where $X,Y,Z \in TM$. In particular, if $g$ is Hessian then this implies the existence of a 
 symmetric rank three tensor $A=(XYZ)\Phi$, where in local coordinates $X=\partial/\partial y^i, Y=\partial/\partial y^j, Z=\partial/\partial y^k$. The vector fields $X,Y,Z$ are called flat vector fields. The rank three symmetric tensor satisfies $A=\nabla_Xg(Y,Z)$ for $g(Y,Z)=(YZ)\Phi$.

If $(M,g)$ is a simply connected manifold of constant sectional curvature $K$ (possibly vanishing) a Codazzi tensor $\kappa$ can be given as: 

\begin{equation}\label{E:Hess}
\kappa= Hess(f) + Kgf, 
\end{equation}

where $f:M \to R$ is a smooth function.  

Conversely, on a manifold of constant sectional curvature $K$, any smooth function generates a Codazzi tensor via the formula Eq.~\ref{E:Hess}.

\subsection{Generalised Codazzi tensors}
Using the notation $\partial_{a}$ for the operator $\frac{\partial}{\partial x^a}$ (where $(x^a)$ are some local coordinates) the components of the metric tensor are written as
$g_{ij}=\partial_i \partial_j\Phi.$

\subsubsection{} As mentioned earlier, the metric tensor is a Codazzi tensor. In particular, for vector fields $X,Y,Z$ on $M$, $A(X,Y,Z)=\nabla_X  g(Y,Z)$ forms a totally symmetric (0,3)-tensor. We have $A(X,Y,Z)= A(X,Z,Y)=A(Y,X,Z)=A(Y,Z,X)=A(Z,Y,X)=A(Z,X,Y)$ by the Codazzi relation $\nabla_{X} g(Y,Z)=\nabla_{Y} g(X,Z)$ and because $g$ is symmetric.  

\subsubsection{} In local coordinates $(x^a)$, $(a=1,\cdots, n)$, the (0,3)-tensor  $\nabla_X  g(Y,Z)$ can be written as $\partial_k g_{ij}$ and by Codazzi's equation we have $\partial_k g_{ij}=\partial_j g_{ik}$, where $X=\partial_k$ and $Y=\partial_j$. If we assume that $g$ is Hessian, then the Codazzi equality $\partial_k g_{ij}=\partial_j g_{ik}$ becomes $\partial_k \partial_i \partial_j \Phi= \partial_j \partial_i \partial_k \Phi$. 
We denote  $\partial_k \partial_i \partial_j \Phi$ simply by $\Phi_{kij}.$

\subsubsection{Higher order Codazzi tensors}
An order $k$ Codazzi tensor is a smooth section $A$ of the vector bundle of $(0,k)$-tensor fields over $M$ satisfying the following equation:
\begin{equation}\label{E:CodGen}
    \nabla_{X_0}A(X_1,X_2,\cdots, X_k)=\nabla_{X_1}A(X_0,X_2,\cdots, X_k ) 
\end{equation}
for any tangent vector fields $X_0, X_1, X_2,\cdots, X_k$ on $M$.  

Since $\Phi$ is by construction a smooth function, we get $\Phi\in C^{\infty}(M)$. This allows the following generalisation to any order $k$ Codazzi tensor. If $A(X_1,\cdots, X_{k})=(X_1\cdots X_{k})\Phi$, where $\Phi$ is $C^\infty$
then the equation Eq. \eqref{E:CodGen} holds. We have thus  order $k$ Codazzi tensors, which are clearly totally symmetric tensors, by the smoothness assumption of $\Phi.$  

For example, it is easy to show that the rank three symmetric tensors $A(X,Y,Z)=(XYZ)\Phi$ satisfy the order three Codazzi relation
\[ \nabla_X A(X,Y,Z)= \nabla_Y A(X,Z,W),\] for vector fields $X,Y,Z$ and $W$ on $M$.

\subsection{On the associativity Equation}\label{S:AE} We survey briefly the Associativity Equation and recent progress related to it.
The Associativity Equation originates in the problem of finding a quasi-homogeneous function $\Phi$ with variables at $x=(x^1,\cdots,x^n)$ such that its third derivatives (for any $x$) are structure constants of an associative algebra $A_x$, with an $x$-independent unity. This is also known as the WDVV equation, named after Witten--Dijkgraaf--Verlinde--Verlinde. The Associativity Equations (or WDVV equations) are given by the following PDE:

\begin{equation}
 \forall a,b,c,d: \quad 
\sum_{e,f}\quad \partial_a\partial_b\partial_e\Phi ~ g^{ef}~ \partial_f \partial_c\partial_d\Phi= \sum_{e,f} \quad \partial_b\partial_c\partial_e\Phi ~ g^{ef}~ \partial_f\partial_a\partial_d\Phi. 
\end{equation} 
where, $\partial_{a}$ stands for the operator $\frac{\partial}{\partial x^a}$ for  some local coordinates $(x^a)$; $g^{ef}$ from the  contravariant components of a  non-degenerate Riemannian metric $g$ and $\Phi$ is a smooth function. The covariant components of $g$ are denoted $g_{ef}$. For the non-degenerate matrix $[g_{ef}]$ corresponding to $g$, the following relation $[g^{ef}]=[g_{ef}]^{-1}$ holds. 
As a result of the {\it geometrization} of the Associativity Equation emerged the notion of {\it Frobenius manifolds} (see \cite[p.20]{Man99}). 

In the physical context, the solutions of the WDVV equations express the moduli space of topological conformal field theories. 
They play a crucial role in the formulation of mirror symmetry for Calabi--Yau 3-folds. Particular solutions of the WDVV with certain properties are generating functions for the Gromov--Witten invariants of Kähler and symplectic manifolds.

This topic represent the culmination of several different mathematical paths  \cite{CoMa,CoVa,Dub96,Fei,Hit,KaKP,KoMa,LoMa,Man05,Man98,KonoMa,Sa,To04} (to cite only a few examples).  For instance: 
\begin{enumerate}
\item One has {\it formal} Frobenius manifolds~\cite{KoMa,KaKP}. Quantum cohomology is an example of such manifolds. In this formal case, formal solutions to the Associativity Equations are the same as cyclic algebras over the homology operad $H_*(\overline{M}_{0,n+1})$ of the moduli spaces of $n$-pointed stable curves of genus zero.
\item Isomonodromic deformations~\cite{Sa} in the framework of Saito's space and Hodge structures. 

\item The continuation of the school of Dubrovin--Novikov (investigations relying on integrable systems, bi-Hamiltonian systems, and equation of hydrodynamical type): \cite{BN,CoVa,Dub96,DN,Fei,Mokh95,KonoMa}. 

\item Recent developments on Frobenius manifolds have led Kontsevich to introduce the notion of $F$-bundles \cite{Kont22}. 
\end{enumerate}

\subsection{Associativity Equation $\&$ Codazzi tensors}

In this section, we show that Hessian manifolds with vanishing curvature exhibit an important geometric structure that allows solving the Associativity Equation, in terms of Codazzi tensors.

\begin{lem}\label{L:1.5.1}
Let $(M,g,\con{0})$ be a Hessian manifold. If $M$ is flat (i.e. has vanishing curvature)  then the Associativity Equation is satisfied.   
\end{lem}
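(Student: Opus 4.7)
The plan is to work in affine coordinates with respect to $\con{0}$ and show that the Riemannian curvature of the Levi-Civita connection of $g$ can be written entirely in terms of the structure constants $C^{c}_{ab}:=g^{cd}\Phi_{abd}$, whose vanishing is literally the Associativity Equation.

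First, I would fix affine coordinates $(x^{1},\ldots,x^{n})$ with respect to $\con{0}$, so that $\con{0}(dx^{i})=0$ and the Hessian hypothesis becomes $g_{ab}=\partial_{a}\partial_{b}\Phi=\Phi_{ab}$. By Sec.~\ref{S:Codazzi} the third partials $C_{abc}:=\Phi_{abc}$ form a totally symmetric $(0,3)$-Codazzi tensor, and this total symmetry collapses the three-term Koszul formula: the Christoffel symbols of the Levi-Civita connection of $g$ reduce to
\[
\Gamma^{c}_{ab}=\tfrac12\, g^{cd}\bigl(\partial_{a}g_{bd}+\partial_{b}g_{ad}-\partial_{d}g_{ab}\bigr)=\tfrac12\, g^{cd}\Phi_{abd}=\tfrac12\, C^{c}_{ab}.
\]

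The main step would be to insert these Christoffels into
\[
R^{d}_{\ cab}=\partial_{a}\Gamma^{d}_{bc}-\partial_{b}\Gamma^{d}_{ac}+\Gamma^{d}_{ae}\Gamma^{e}_{bc}-\Gamma^{d}_{be}\Gamma^{e}_{ac},
\]
and track each contribution. The fourth-derivative terms $\tfrac12 g^{dh}\Phi_{abch}$ produced by $\partial_{a}\Gamma^{d}_{bc}$ and $\partial_{b}\Gamma^{d}_{ac}$ are symmetric under $a\leftrightarrow b$ and hence cancel in $R^{d}_{\ cab}$. Using $\partial_{a}g^{dh}=-g^{df}g^{hk}\Phi_{afk}$, the surviving contributions from the derivatives of the inverse metric combine to $-\tfrac12\bigl(C^{d}_{ae}C^{e}_{bc}-C^{d}_{be}C^{e}_{ac}\bigr)$, while the $\Gamma\Gamma$ pieces contribute $+\tfrac14\bigl(C^{d}_{ae}C^{e}_{bc}-C^{d}_{be}C^{e}_{ac}\bigr)$. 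After exploiting the total symmetry of $C_{abc}$ to rename dummy indices and bring everything into a common form, the two contributions add to
\[
R^{d}_{\ cab}=\tfrac14\bigl(C^{d}_{be}C^{e}_{ac}-C^{d}_{ae}C^{e}_{bc}\bigr).
\]

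Flatness of $M$ then reads $C^{d}_{ae}C^{e}_{bc}=C^{d}_{be}C^{e}_{ac}$ for all indices, which is precisely the associativity of the product $\partial_{a}\cdot\partial_{b}:=C^{c}_{ab}\partial_{c}$ on $TM$. Lowering the free upper index with $g_{dk}$ and relabelling, this identity becomes
\[
\sum_{e,f}\Phi_{abe}\, g^{ef}\,\Phi_{fcd}=\sum_{e,f}\Phi_{bce}\, g^{ef}\,\Phi_{fad},
\]
which is exactly the Associativity/WDVV Equation of Sec.~\ref{S:AE}. The main obstacle is the bookkeeping inside the curvature computation: one has to correctly carry the minus sign in $\partial g^{\cdot\cdot}$, use the Codazzi total symmetry of $C_{abc}$ to recognize that the $\partial g^{\cdot\cdot}\cdot\Phi$ contributions and the $\Gamma\Gamma$ contributions are proportional to the same tensor $C^{d}_{be}C^{e}_{ac}-C^{d}_{ae}C^{e}_{bc}$, and verify that their coefficients $-\tfrac12$ and $+\tfrac14$ add to a nonzero multiple rather than cancelling. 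Once this algebraic identity is secured, the statement of the lemma is a direct reading.
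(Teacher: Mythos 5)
Your computation is correct: in affine coordinates $\Gamma^{c}_{ab}=\tfrac12\sum_d g^{cd}\Phi_{abd}$, the fourth-order contributions $\tfrac12 g^{dh}\Phi_{abch}$ to $\partial_a\Gamma^d_{bc}$ are symmetric under $a\leftrightarrow b$ and cancel in the curvature, the $\partial(g^{-1})$ terms contribute $-\tfrac12\bigl(C^{d}_{ae}C^{e}_{bc}-C^{d}_{be}C^{e}_{ac}\bigr)$ and the $\Gamma\Gamma$ terms $+\tfrac14\bigl(C^{d}_{ae}C^{e}_{bc}-C^{d}_{be}C^{e}_{ac}\bigr)$, so $R^{d}{}_{cab}=\tfrac14\bigl(C^{d}_{be}C^{e}_{ac}-C^{d}_{ae}C^{e}_{bc}\bigr)$ and lowering the index with $g$ turns flatness into exactly the Associativity Equation of Sec.~\ref{S:AE}. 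This is, however, a genuinely different route from the paper's. The paper starts from a quoted expression for the curvature of a Hessian metric that still contains the fourth derivative $\partial_c\partial_d g_{ab}=\Phi_{abcd}$ together with the quadratic term $\sum_{e,f} g^{ef}\partial_c g_{af}\partial_d g_{eb}$; setting the curvature to zero it solves for $\Phi_{abcd}$ as that quadratic expression and then uses the \emph{total symmetry} of the fourth derivative under permutation of $a,b,c,d$ to transfer that symmetry to the quadratic term, which is the WDVV identity. You instead compute the Levi--Civita connection explicitly and show the fourth-derivative terms cancel identically, so the curvature \emph{is} (a multiple of) the associator of the product $\partial_a\circ\partial_b=\sum_c C^{c}_{ab}\partial_c$ --- in agreement, up to the overall factor $\tfrac14$, with the formula $R_{acdb}=\sum_{e,f}g^{ef}(A_{eab}A_{fcd}-A_{ead}A_{fcb})$ that the paper itself invokes later in the proof of Theorem~\ref{T:preF}. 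Your route buys independence from the quoted curvature formula (which as printed carries a stray index $l$) and yields the stronger conclusion that for a Hessian manifold flatness is \emph{equivalent} to the Associativity Equation, not merely sufficient for it; the paper's route is shorter but hinges on the symmetry of $\Phi_{abcd}$ rather than on an explicit verification that the fourth-derivative terms drop out of the curvature.
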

\begin{proof}
 For the Hessian manifold $M$, the curvature tensor $R_{ijkl}$ is given by: 
\[R_{abcd}=\partial_c\partial_lg_{ab}-\sum_{ef} g^{ef} \partial_c g_{af} \partial_d g_{eb}. \]
If the Riemannian curvature of $M$ is zero, then it implies that 
$$\partial_k\partial_l g_{ab}=\sum_{e,f} g^{ef} \partial_k g_{af} \partial_l g_{eb}.$$ 

By assumption $g$ is Hessian, so we have that $\partial_c\partial_dg_{ab}=\partial_c\partial_d \partial_a\partial_b\Phi= \Phi_{cdab}$. 

Since the tensor $\Phi_{cdab}$ is fully symmetric, $\Phi_{cdab}=\Phi_{acdb}$. 
By hypothesis, the curvature tensor is null. Therefore,  
$\Phi_{acdb}=\sum_{e,f} g^{ef} \partial_b g_{af} \partial_d g_{ec}$.
The equality $\Phi_{cdab}=\Phi_{acdb}$ implies thus that  

$$\sum_{e,f} g^{ef} \partial_c g_{af} \partial_d g_{eb}=\sum_{e,f} g^{ef} \partial_b g_{af} \partial_d g_{ec}.$$ This corresponds to the Associativity Equation.

\end{proof}

We show the following statement. 
\begin{prop}\label{P:Codazzi1}
Let $(M,g,\con{0})$ be a Hessian manifold, satisfying the Associativity Equation. Then the  following equation is satisfied:

\begin{equation}\label{E:Codazzi}
    \sum_f \Phi^f_{ab}\con{0}_f g_{cd}=  \sum_f  \Phi^f_{bc}\con{0}_f g_{ad}.
\end{equation}
\end{prop}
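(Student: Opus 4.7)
The plan is to show that equation \eqref{E:Codazzi} is, up to raising one index by $g^{ef}$, a direct transcription of the Associativity Equation under the hypotheses of the proposition. Since $(M,g,\con{0})$ is Hessian, I can work in affine coordinates $(x^a)$ adapted to $\con{0}$, so that $\con{0}$ acts as ordinary partial differentiation on the components $g_{cd}$. This already reduces the right- and left-hand sides of \eqref{E:Codazzi} to expressions involving partial derivatives of the metric and of the potential.

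First I would unpack notation. In the affine coordinates the Hessian condition gives $g_{ab}=\partial_a\partial_b\Phi$, hence
\[
\con{0}_f g_{cd}=\partial_f g_{cd}=\partial_f\partial_c\partial_d\Phi=\Phi_{fcd}.
\]
The symbols $\Phi^f_{ab}$ are understood as the rank-three Codazzi tensor $A(X_a,X_b,X_e)=\Phi_{abe}$ with one index raised by the inverse metric:
\[
\Phi^f_{ab}=\sum_{e}g^{ef}\Phi_{eab}.
\]
The full symmetry of $\Phi_{abc}$, already recorded in Sec.~\ref{S:Codazzi}, together with the symmetry of $g^{ef}$, will let me reorder indices freely inside sums.

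Next I would substitute these identifications into both sides of \eqref{E:Codazzi}. The left-hand side becomes
\[
\sum_f\Phi^f_{ab}\,\con{0}_f g_{cd}=\sum_{e,f}g^{ef}\Phi_{eab}\Phi_{fcd}=\sum_{e,f}\partial_a\partial_b\partial_e\Phi\;g^{ef}\;\partial_f\partial_c\partial_d\Phi,
\]
and an identical manipulation on the right-hand side produces
\[
\sum_f\Phi^f_{bc}\,\con{0}_f g_{ad}=\sum_{e,f}\partial_b\partial_c\partial_e\Phi\;g^{ef}\;\partial_f\partial_a\partial_d\Phi.
\]
The equality between these two expressions is precisely the statement of the Associativity Equation displayed in Sec.~\ref{S:AE}, which holds by assumption.

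There is no real obstacle: the content of the proposition is a re-writing of the WDVV/Associativity Equation in Codazzi-tensor language, made possible by the Hessian hypothesis (which forces $\partial_f g_{cd}=\Phi_{fcd}$) and by Lemma~\ref{L:1.5.1} (which already placed flat Hessian manifolds in the realm where the Associativity Equation is natural). The only care needed is to keep track of which indices are summed and which pair of derivatives corresponds to $g_{cd}$ versus to the third derivative of $\Phi$; the full symmetry of $\Phi_{abc}$ makes this bookkeeping automatic.
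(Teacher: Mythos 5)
Your proposal is correct and follows essentially the same route as the paper's own proof: identify $\Phi^f_{ab}$ as the partial dualization $\sum_e\Phi_{abe}g^{ef}$, use the Hessian property to write $\con{0}_f g_{cd}=\partial_f\partial_c\partial_d\Phi$, and observe that Eq.~\eqref{E:Codazzi} then becomes a verbatim transcription of the Associativity Equation. The only cosmetic difference is that you unpack \eqref{E:Codazzi} down to the WDVV form while the paper substitutes in the opposite direction, which is the same argument read backwards.
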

\begin{proof}
Consider the space of rank three symmetric tensors.  Let $\Phi_{ijk}$ be a (0,3)-symmetric covariant tensor.  
Using partial dualization, we build the (1,2)-tensors via the metric $g$.  That is: 
\begin{equation}\label{E:Dual}
\sum_{k} \Phi_{ijk}g^{kf}=\Phi^f_{ij},
\end{equation} 
where the metric tensor components are given by $[g^{ef}]=[g_{ef}]^{-1}$. 
This operation provides a mixed tensor, one times contravariant and two times covariant.

Given $(M,g,\con{0})$ a Hessian manifold, the Associativity Equation can be expressed in terms of Codazzi tensors of order 2 and 3 as in Eq.\eqref{E:Codazzi}.

Indeed,  recall that the Associativity Equation is usually expressed as follows:    $$\forall a,b,c,d: \quad 
\sum_{e,f}\quad \partial_a\partial_b\partial_e\Phi ~ g^{ef}~ \partial_f \partial_c\partial_d\Phi= \sum_{e,f} \quad \partial_b\partial_c\partial_e\Phi ~ g^{ef}~ \partial_f\partial_a\partial_d\Phi.$$

From Eq.\eqref{E:Dual}, we can replace $\sum_e\Phi_{abe}g^{ef}$ by $\Phi^f_{ab}$. The symmetric rank 3 tensor  $\partial_f \partial_c\partial_d\Phi$ can be expressed as $\con{0}_f g_{cd}$. We apply the same rules for the right hand side of the equality. That is $\sum_e\partial_b\partial_c\partial_e\Phi ~ g^{ef}$ is expressed as $\Phi^f_{bc}$
and  $\partial_f\partial_a\partial_d\Phi$  is expressed as $\con{0}_f g_{ad}$. This generates the equation Eq.~\eqref{E:Codazzi}.
 \end{proof}

\section{(Pre-)Frobenius structures}\label{S:2}

\smallskip

\subsection{Pre-Frobenius manifold} 
We have discussed the case of affine Hessian manifolds and shown that the Associativity Equation can be satisfied if those manifolds are flat. While the Associativity Equation remains an object expressed in the language of differential geometry in \cite{Dub96}, a more algebraic version can be given. We explain below this approach by starting with the preliminary structure of {\it pre-Frobenius manifolds.} Then, the transition from pre-Frobenius manifolds to Frobenius manifolds is stated. 

\subsubsection{Pre-Frobenius structures}

 Let us recall the definition of a pre-Frobenius structure, from \cite[p.~18-19]{Man99}. The following construction is established for classical categories of manifolds: $C^\infty$, real analytic. The tangent sheaf is denoted $\cT_M$. 

A potential pre-Frobenius manifold consists of the following data (i)--(v).
\begin{enumerate}[label=(\roman*)]\label{Enum}
\item  A Riemannian manifold $M$ is equipped with an affine flat structure.  
The existence of such an affine structure implies the existence of a flat affine connection $\con{0}$ on the tangent space, and reciprocally. Therefore, on $M$ there exists a  flat (affine) torsionless connection $\con{0}$. 
\item[] 
    \item There exists a non-degenerate symmetric bilinear form $g$ on $\cT_M$, where $g$ is compatible with the connection $\con{0}$. 
       \item[] 
       
    \item  There exists a rank three symmetric tensor $A$ such that $A(X,Y,Z)=(XYZ)\Phi$, where $X,Y,Z$ are flat vector fields. In local flat coordinates, denoted $(x^a)$, one has the following: 
$A_{abc}=\partial_a\partial_b\partial_c \Phi,$ where $\Phi$ is a real potential function and the vector fields are $X=\partial_a, Y= \partial_b, Z =\partial_c$.

\item[]
 
\item There exists a bilinear composition law $\circ: \cT_{M} \times \cT_{M} \to \cT_{M} $.  This is given by the partial dualization $A\cdot g^{-1}$, providing a (1,2)-tensor field. The components of these tensor fields are given, in local coordinates, by   $\partial_a\circ \partial_b =\sum_{c}A_{ab}^c\partial_c,$ where $A_{ab}^c=\sum_eA_{abe}g^{ec}$.

\item[]

\item The rank three symmetric tensor $A$ satisfies for flat vector fields $X,Y,Z$ the relation $A(X,Y,Z)=g(X\circ Y,Z)=g(X,Y\circ Z)$.

\end{enumerate}

The following lemma allows to understand the geometric properties relating the multiplication and the covariant derivative.
\begin{lem}
Consider the affine space of connections on $M$. The difference of two connections $\nabla_1-\nabla_2$ coincides with a section of $\Omega^1_M \otimes End( \cT_{M})$ where $\Omega^1_M$ is the sheaf of 1-forms on $M$.  One has the following identification $\Omega^1_M \otimes End( \cT_{M})\cong Hom(\cT_{M}\otimes\cT_{M},\cT_{M}).$ 

Reciprocally, given a connection $\con{0}$ on $\cT_{M}$ and a section $\mathcal{A}$ of $Hom(\cT_{M}\otimes\cT_{M},\cT_{M})$ one can build a pencil of connections:

\begin{equation} \label{E:Connections}
\con{t}_X(Y)=\con{0}_X(Y)+t \mathcal{A},\end{equation}
where $t$ is a real parameter. 
The section $\mathcal{A}$ generates a multiplication operation $X\circ ^\mathcal{A} Y $ on $\cT_{M}$.

\end{lem}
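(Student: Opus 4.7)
The statement is a standard differential-geometric fact, and my plan is to split the proof into two directions corresponding to the two halves of the lemma, both of which reduce to checking $C^\infty$-linearity properties.

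For the first part, I would begin by observing that on any affine space of connections, the difference $\nabla_1-\nabla_2$ is tensorial. Concretely, one picks vector fields $X,Y\in\Upsilon(M)$ and a smooth function $f\in C^\infty(M)$, and verifies the two $C^\infty$-linearities: linearity in $X$ is immediate since each connection is itself $C^\infty$-linear in its lower slot, and linearity in $Y$ follows from a one-line computation in which the two Leibniz terms $X(f)Y$ cancel:
\begin{equation*}
(\nabla_1-\nabla_2)_X(fY)=\bigl(f\nabla_{1,X}Y+X(f)Y\bigr)-\bigl(f\nabla_{2,X}Y+X(f)Y\bigr)=f(\nabla_1-\nabla_2)_X Y.
\end{equation*}
Hence $\nabla_1-\nabla_2$ defines a global section of $T^{*}M\otimes\mathrm{End}(\cT_M)=\Omega^1_M\otimes\mathrm{End}(\cT_M)$. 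For the announced identification, I would invoke the canonical isomorphism $\Omega^1_M\otimes\mathrm{End}(\cT_M)\cong\mathrm{Hom}(\cT_M,\mathrm{Hom}(\cT_M,\cT_M))\cong\mathrm{Hom}(\cT_M\otimes\cT_M,\cT_M)$, which is just currying in the locally free $\cO_M$-module setting (or, pointwise, the adjunction between tensor product and $\mathrm{Hom}$).

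For the converse direction, given $\con{0}$ and a tensor $\mathcal{A}\in\mathrm{Hom}(\cT_M\otimes\cT_M,\cT_M)$, I would verify directly that $\con{t}_X(Y):=\con{0}_X(Y)+t\,\mathcal{A}(X,Y)$ satisfies the two axioms of a linear connection. Additivity and $C^\infty$-linearity in $X$ come for free from the corresponding properties of $\con{0}$ and of $\mathcal{A}$, while the Leibniz rule in the $Y$-slot follows from the $C^\infty$-linearity of $\mathcal{A}$ in $Y$:
\begin{equation*}
\con{t}_X(fY)=\con{0}_X(fY)+t\,\mathcal{A}(X,fY)=X(f)Y+f\con{0}_X(Y)+tf\,\mathcal{A}(X,Y)=X(f)Y+f\con{t}_X(Y).
\end{equation*}
As $t$ varies in $\R$, this produces the announced pencil. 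The associated multiplication is then obtained by setting $X\circ^{\mathcal{A}} Y:=\mathcal{A}(X,Y)$, which makes sense because $\mathcal{A}$ is a $C^\infty$-bilinear map $\cT_M\times\cT_M\to\cT_M$ and therefore defines a fibrewise bilinear operation on $TM$.

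There is no genuine obstacle here: the only subtle point is ensuring that the Leibniz-rule computation works in the $Y$-slot (since in the $X$-slot one never produces a derivative term to begin with), and this is exactly where the $C^\infty$-linearity of $\mathcal{A}$ in both arguments is used. I would keep the exposition short, emphasising that the whole lemma is a formal consequence of the fact that connections form an affine space modelled on the tensorial sections of $\Omega^1_M\otimes\mathrm{End}(\cT_M)$, and that this affine-space structure is precisely what later allows the construction of the pencil $\con{t}$ appearing in the axioms of a (pre-)Frobenius manifold.
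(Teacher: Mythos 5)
Your proof is correct and is precisely the standard argument that the paper invokes without writing out (its proof reads, in full, ``The proof follows from basic properties of affine differential geometry''). You supply the details the paper omits --- tensoriality of $\nabla_1-\nabla_2$ via cancellation of the Leibniz terms, the currying isomorphism, and the Leibniz-rule check for $\con{t}$ --- but the route is the same one the paper has in mind, so there is nothing to reconcile.
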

\begin{proof}
The proof follows from basic properties of affine differential geometry. 
\end{proof}
Throughout the rest of the paper we shall write for simplicity $\circ$ instead of $\circ^\mathcal{A}$. 

\subsubsection{Identity vector field}
A vector field $e$ is called the identity if $e\circ X=X$ for all $X$.
If $A$ has a potential, then for all flat vector fields $X, Y$, 
$eXY\Phi=g(X,Y)$. 
If $e=\sum\limits_q \varepsilon^q\partial_q$ is a vector field where $\partial_q$ is flat then we have that:
\[A(e,X,Y)=\sum_q\varepsilon^q\partial_qXY\Phi=eXY\Phi=g(X,Y).\]

\subsubsection{Euler vector fields}\label{S:Euler}
A convenient way of expressing the quasi-homogeneity of the potential function $\Phi(x)$  is given in terms of an Euler vector field, that we define as being $E=\sum_a E^a(x)\partial_a$. A function is said to be quasi-homogeneous if $ \Phi(c^{m_1}x^{1}, \cdots, c^{m_n}x^{n})=c^{m}\Phi(x^1,\cdots, x^n)$  where $c$, $m$ and $m_i$ are non-null constants. 

 Translating the quasi-homogeneity in terms of Euler vector fields leads to having $Lie_E(\Phi(x))=m\Phi(x).$ The Euler vector fields are expressed locally as $E(x)= \sum_i (a_{j}^ix^j+b^i )\partial_i$, which shows that these are {\it affine vector fields} (see discussion in Proposition \ref{P:Euler}).

More globally if $(M,g,A)$ a pre-Frobenius manifold the Euler vector field satisfies for all vector fields $X,Y$ the following relation: 
\begin{equation}\label{E:Euler} [E,X\circ Y]-[E,X]\circ Y - X\circ [E,Y]= \beta X\circ Y\end{equation}
where $\beta$ is a constant (see \cite[p.24]{Man99}).

\subsection{Frobenius manifolds}

Recall that a {\it Frobenius algebra} over a field of characteristic 0 is a unital, commutative, associative algebra (finite dimensional) equipped with a non-degenerate symmetric bilinear form $\langle-,-\rangle$ where:  $\langle \, x\circ\,  y,z\rangle=\langle x,y\, \circ\,  z\rangle$, for all $x,y,z$ lying in the Frobenius algebra. 

According to \cite{Dub96} the  manifold $M$ admits the structure of a Frobenius manifold if:
\begin{itemize}
\item at any point of $M$ the tangent space has the structure of a Frobenius algebra; 
\item the invariant inner product $\langle-,-\rangle $ is a flat metric on $M$;
\item the unity vector field satisfied $\con{0}e=0$;
\item The rank 4 tensor $(\nabla_W A) (X,Y,Z)$ is fully symmetric;
\item The vector field $E$ is determined on M such that $\con{0}(\con{0}E)=0$. 

\end{itemize}

The Euler vector field is clearly an affine vector field c.f. Prop.\ref{P:Euler}. Its existence is tightly related to the fact that we have an affine structure. Therefore, this definition can be encapsulated in the following more condensed. 

\begin{dfn}\label{D:preF+F}  
Assume $M$ is a potential pre-Frobenius manifold (resp. domain) satisfying conditions (i)--(v). If  $(\cT_M,\circ)$ is a Frobenius algebra, then $M$ is a Frobenius manifold (resp. locus).
\end{dfn}

In particular, a pre-Frobenius manifold $M$ is Frobenius if and only if the  connections $\con{t}$ defined in Eq. \eqref{E:Connections} are flat (see \cite[Theorem 1.5]{Man99}). This is in fact equivalent to satisfying the Associativity Equations.

\begin{rem}
One can check that the Euler characteristic of a Frobenius manifold is  0. 
\end{rem}

\section{Monge-Ampère manifolds are potential pre-Frobenius manifolds}\label{S:PDE}

\subsection{Monge--Ampère domains and geometrization of the elliptic Monge--Ampère equation}

\subsubsection{Geometrization of an elliptic Monge--Ampère equation (GEMA)}
If $\D$ is a strictly convex bounded subset of $\mathbb{R}^n$ then for any nonnegative function $f$ on $\D$ and continuous $\tilde{g}:\partial \D \to \mathbb{R}^n$ there is a unique convex smooth function $\Phi\in C^{\infty}(\mathscr{D})$ such that 
\begin{equation}\label{E:EMA}
\det \mathrm{Hess}(\Phi)= f, 
\end{equation} in $\D$ and $\Phi=\tilde{g}$ on $\partial \D,$ (see \cite{RT} Eq. (1.1) and Eq. (1.2))

\medskip

The {\it geometrization of an elliptic Monge--Ampère equation} refers to the geometric data generated by $(\mathscr{D}, \Phi)$, where \begin{itemize}
    \item $\mathscr{D}$ is a strictly convex domain 
    \item $\Phi$ a real convex smooth function (with arbitrary and smooth boundary values of $\Phi$)  
    \end{itemize}
    such that Eq.~\eqref{E:EMA} is satisfied.   For simplicity, we use the symbol $(\mathscr{D},\Phi)$ to refer to a GEMA.

\subsubsection{Codazzi tensors and EMA}
Positive definite Codazzi tensors can be determined by elementary symmetric functions of eigenvalues $P_s(k)$  where, $s=1,2,...,n$. If $k$ is symmetric, then it has $n$ real eigenvalues $l_{1},\cdots, l_{n}$, roots of the equation $\det(k-l g)=0$. If the sectional curvature differs from zero, then the elementary symmetric polynomial $P_n(k)$ is given by $$P_n(k)=\frac{\det(Hess(f)+Kgf)}{\det(g)}$$ and this forms an equation of  Monge-Amp\`ere type. In the case where $k$ is positive definite, this provides elliptic solutions.  

\subsubsection{Affine structure on GEMA}
As a more general approach, we consider a domain $\mathscr{D}$ being a GEMA (instead of a Monge-Ampère manifold). 
\begin{lem}\label{L:Aff}
 $(\mathscr{D},\Phi)$ is equipped with an affine flat structure.  
\end{lem}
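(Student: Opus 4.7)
The statement is essentially built into the setup, so the plan is to unpack the definitions and verify the axioms of Section~\ref{S:1.1} directly. The key observation is that the GEMA data $(\mathscr{D},\Phi)$ is given with $\mathscr{D}$ a strictly convex bounded open subset of $\R^n$; thus $\mathscr{D}$ comes equipped with a distinguished global chart, namely the inclusion $\iota:\mathscr{D}\hookrightarrow\R^n$, sending a point to its standard Cartesian coordinates $(x^1,\ldots,x^n)$.

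First, I would use $\{(\mathscr{D},\iota)\}$ as a one-element atlas on $\mathscr{D}$. Because the atlas consists of a single chart, the cocycle condition on coordinate changes from the definition in Sec.~\ref{S:2.2.1} is vacuously satisfied: there are no overlaps $\phi_b\circ\phi_a^{-1}$ to check, hence trivially all transition functions are affine. This already endows $\mathscr{D}$ with an affine structure in the sense required. Equivalently, following the dictionary recalled in Sec.~\ref{S:2.2.1}, one can define $\con{0}$ on $\mathscr{D}$ to be the restriction of the canonical flat torsion-free connection of $\R^n$, i.e.\ the unique connection determined by $\con{0}(dx^i)=0$ for all $i=1,\ldots,n$. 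Its flatness and vanishing torsion are inherited from $\R^n$, and the $(x^i)$ are flat affine coordinates with respect to it.

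Second, I would note for completeness that this structure is compatible with the Monge--Amp\`ere data: since $\Phi\in C^\infty(\mathscr{D})$ is strictly convex (because $\det\mathrm{Hess}(\Phi)=f>0$ in the elliptic regime, together with convexity forcing positive definiteness of $\mathrm{Hess}(\Phi)$), the symmetric tensor $g_{ij}=\partial_i\partial_j\Phi$ is positive definite. Thus, in these very coordinates, $g=\sum_{i,j}\partial_i\partial_j\Phi\, dx^i dx^j$ is a Hessian Riemannian metric with respect to $\con{0}$, which will be useful later when passing to the (pre-)Frobenius structure of Theorem~\ref{T:main}.

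\textbf{Main difficulty.} There is no substantive obstacle here: the lemma is essentially a tautological unpacking of the hypothesis ``$\mathscr{D}\subset\R^n$'', and its role is purely setting up notation ($\con{0}$, the flat coordinates $(x^i)$, and the flat vector fields $\partial_i$) so that the GEMA can be fed into the Hessian/Codazzi machinery developed in Sec.~\ref{S:Hessian}. The only point requiring a brief comment is strict convexity of $\Phi$, which is a direct consequence of ellipticity of Eq.~\eqref{E:EMA} together with the imposed convexity in the GEMA data.
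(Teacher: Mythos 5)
Your proof is correct, but it takes a different route from the paper's. You treat the lemma as a tautology about subsets of $\R^n$: the single global chart $\iota:\mathscr{D}\hookrightarrow\R^n$ makes the cocycle condition of Sec.~\ref{S:2.2.1} vacuous, and $\con{0}$ is just the restriction of the standard flat torsion-free connection. The paper instead argues via the invariance of Eq.~\eqref{E:EMA} under unimodular linear transformations of $(x^1,\dots,x^n)$ (citing Calabi), and concludes that the class of coordinate systems adapted to the Monge--Amp\`ere equation forms an affine atlas. The difference is not just stylistic: your argument shows that $\mathscr{D}$ carries \emph{an} affine flat structure (inherited from the ambient space, independently of $\Phi$ and of the equation), whereas the paper's argument is aimed at showing that this structure is \emph{intrinsic to the GEMA data} --- the admissible coordinate changes are exactly those preserving the form of the Monge--Amp\`ere operator, so the affine structure is the one canonically attached to $(\mathscr{D},\Phi)$ rather than an arbitrary choice. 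Your version is more elementary and logically sufficient for the literal statement of the lemma; the paper's version buys the naturality that is implicitly used later when tensors built from partial derivatives of $\Phi$ (the metric $g_{ij}$ and the tensor $A_{ijk}$ of Lemma~\ref{L:rank2,3}) are claimed to transform as covariant tensors under the allowed coordinate changes. Your closing remark on strict convexity is fine but note that positivity of $\mathrm{Hess}(\Phi)$ requires $f>0$; the paper's setup only assumes $f$ nonnegative, with convexity of $\Phi$ imposed as part of the data.
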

\begin{proof}

Eq.~\eqref{E:EMA} is invariant under unimodular linear transformations of the variables $(x^1,\cdots, x^n)\in \R^n$ (see~\cite{Cal}). This coincides with the definition stated in Sec.\ref{S:2.2.1}. Therefore, $\mathscr{D}$ has an affine flat structure. 
\end{proof}

\begin{cor}\label{C:con}
$(\mathscr{D},\Phi)$ is equipped with flat and torsionless affine connection $\con{0}$.
\end{cor}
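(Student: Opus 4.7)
The plan is to obtain the corollary as an immediate consequence of Lemma~\ref{L:Aff} together with the general correspondence between affine structures and flat torsionless connections recalled in Sec.~\ref{S:2.2.1}.

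First, I would invoke Lemma~\ref{L:Aff} to equip the domain $(\mathscr{D},\Phi)$ with an affine flat structure, that is, a collection of coordinate charts whose transition maps on overlaps are affine transformations of $\R^n$. The key input here is the unimodular invariance of the elliptic Monge--Ampère equation~\eqref{E:EMA}, which ensures that the ambient affine coordinates $(x^1,\dots,x^n)$ on $\mathscr{D}\subset\R^n$ form a compatible atlas for such a structure.

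Next, I would apply the basic fact highlighted in Sec.~\ref{S:2.2.1}: an affine structure on a smooth manifold $M$ induces, and is in turn induced by, a flat torsionless affine connection $\con{0}$ on $M$. Concretely, in any affine chart one defines $\con{0}$ by declaring the coordinate vector fields $\partial/\partial x^i$ to be parallel; the affine compatibility of transition maps guarantees that this prescription is chart-independent, yielding a globally defined connection on $\mathscr{D}$. Its Christoffel symbols vanish in affine coordinates, so both the curvature and the torsion tensors vanish identically.

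Combining these two steps gives the claim: the affine flat structure from Lemma~\ref{L:Aff} produces the desired flat, torsionless affine connection $\con{0}$ on $(\mathscr{D},\Phi)$. There is no real obstacle here: the argument is a one-line consequence of the preceding lemma and the standard equivalence between affine atlases and flat symmetric connections, and so the proof is essentially a matter of citing these two inputs correctly.
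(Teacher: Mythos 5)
Your proposal is correct and follows exactly the same route as the paper: invoke Lemma~\ref{L:Aff} to obtain the affine flat structure on $\mathscr{D}$, then apply the equivalence from Sec.~\ref{S:2.2.1} between affine structures and flat torsionless connections. The extra detail you supply about defining $\con{0}$ by making the coordinate vector fields parallel in each affine chart is a harmless elaboration of the same standard fact the paper cites.
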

\begin{proof}
By Lemma \ref{L:Aff}, we may assume that the domain $\mathscr{D}$ is equipped with an affine flat structure. This is equivalent to the existence of a flat, torsionless affine connection $\con{0}$ on this domain.
Therefore, $(\mathscr{D},\Phi)$ is equipped with a flat torsionless affine connection, as expected.
\end{proof}

\subsubsection{Hessian structure of GEMA}
In the following, our focus goes mainly to tensors defined by second and third derivatives. In particular, this implies the construction of existence of a Riemannian metric, which is Hessian.

\begin{lem}\label{L:rank2,3}
$(\mathscr{D},\Phi)$ is equipped with a Hessian metric $g$ and a rank three symmetric tensor $A$, which are defined in local coordinates as follows:

\begin{equation}\label{E:EQ1}
g_{ij}(x)=\frac{\partial^2\Phi}{\partial x^i \partial x^j},
\end{equation}

\begin{equation}\label{E:EQ2}
A_{ijk}(x)=\frac{\partial^3\Phi}{\partial x^i \partial x^j\partial x^k}, 
\end{equation}
where $i,\, j\ ,k\, =1,2,\cdots, n$.
\end{lem}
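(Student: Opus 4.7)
The plan is to read off $g$ and $A$ directly from the potential $\Phi$ using the prescribed local formulas, then verify that these expressions patch together into globally defined tensor fields, and finally check the claimed symmetry and non-degeneracy properties.

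First, I would invoke Corollary~\ref{C:con} to fix a flat torsionless affine connection $\con{0}$ on $\mathscr{D}$. This means that the transition maps between any two admissible local coordinate systems are affine (indeed unimodular affine by \cite{Cal}), i.e.\ of the form $x \mapsto Ux+v$ with constant matrix $U$. Under such a transition all derivatives of the transition of order $\ge 2$ vanish, so the chain rule reduces to the tensorial transformation laws for a $(0,2)$-tensor acting on $\partial_i\partial_j\Phi$ and for a $(0,3)$-tensor acting on $\partial_i\partial_j\partial_k\Phi$. Hence the local prescriptions~\eqref{E:EQ1} and~\eqref{E:EQ2} define genuine tensor fields $g$ and $A$ on $\mathscr{D}$, independent of the affine chart.

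Second, since $\Phi \in C^\infty(\mathscr{D})$, the generalised Schwarz theorem on the equality of mixed partial derivatives yields immediately the symmetry $g_{ij}=g_{ji}$ and the total symmetry of $A_{ijk}$ under all permutations of $(i,j,k)$.

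Third, I would establish non-degeneracy (indeed positive-definiteness) of $g$ from the hypotheses on the GEMA $(\mathscr{D},\Phi)$: strict convexity of $\Phi$ ensures that $[\partial_i\partial_j\Phi(x)]$ is a positive-definite symmetric matrix at each $x \in \mathscr{D}$, and this is corroborated by $\det g = f \ge 0$ coming from~\eqref{E:EMA} together with the selection of the convex branch of solutions. Consequently $g$ is a Riemannian metric, and by its very definition it is Hessian with potential $\Phi$ with respect to the affine coordinates provided by $\con{0}$, as in Sec.~\ref{S:1.1}.

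The argument is largely formal: the only piece of genuine content is the observation that the affine-flat atlas on $\mathscr{D}$ forces all coordinate transitions to have vanishing higher-order derivatives, so that iterated partials of the potential upgrade from mere local expressions to honest tensors. This is the step I would expect a reader to wish to see spelled out, although it is purely a chain-rule computation; everything else is a direct consequence of the smoothness of $\Phi$ and its strict convexity.
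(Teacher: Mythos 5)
Your proposal is correct and follows essentially the same route as the paper: both use the affine flat structure from Lemma~\ref{L:Aff} (so that coordinate transitions are affine and the arrays of iterated partial derivatives of $\Phi$ transform as covariant tensors), smoothness of $\Phi$ for total symmetry, and convexity of $\Phi$ for positive-definiteness, hence non-degeneracy, of $g$. You merely spell out the chain-rule step that the paper leaves implicit; the only caveat is that your side remark that $\det g = f \ge 0$ ``corroborates'' positive-definiteness adds nothing (a non-negative determinant does not imply definiteness), but since you correctly derive definiteness from strict convexity this does not affect the argument.
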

\begin{proof}
By Lemma~\ref{L:Aff}, $\mathscr{D}$ has an affine flat structure. If one allows only linear transformations of the variables $(x^1,\cdots, x^n)\in \R^n$,
then the array of all partial derivatives of $\Phi$ of any given order can be interpreted as the components of a covariant tensor, symmetric in all pairs of indices. Therefore, $g$ and $A$ are symmetric tensors. 
By assumption $\Phi$ is convex. Therefore, the isometric tensor defined in Eq.~\eqref{E:EQ1} is positive definite. 
The domain of $\mathscr{D}$ is thus equipped with a Riemannian metric $$ds^2=g_{ij}dx^idx^j.$$ 
Given that the metric tensor is expressed as a Hessian of $\Phi$ (see Eq.~\eqref{E:EQ1}) 
the Riemannian metric $g$ is a Hessian metric.
Moreover, to each component $g_{ij}$ one can associate naturally the contravariant tensor $g^{ij}$. 
Therefore, we have proved that there exists a non-degenerate Hessian metric $g$ and a symmetric rank three tensor $A$ on the domain $\mathscr{D}$.
\end{proof}

\subsubsection{} Whenever it is clear from context that the domain $\mathscr{D}$ is a manifold we will call the object $(\mathscr{D},\Phi)$ a Monge--Ampère manifold. This latter coincides with the notion defined in \cite[Sec. 3.2]{KoS01}. We show in Sec. \ref{S:GEMA} that a Monge--Ampère manifold forms a pre-Frobenius manifold.

\subsection{Pre-Frobenius structures on GEMA}\label{S:GEMA}
We now prove that the quintuple $(\mathscr{D},\Phi,g,A, \con{0})$ has a pre-Frobenius structure. 
\begin{thm}\label{T:main}
Let $\mathscr{D}$ be a domain (resp. manifold). 
The quintuple $(\mathscr{D},\Phi,g,A, \con{0})$ is a potential pre-Frobenius domain (resp. manifold).
\end{thm}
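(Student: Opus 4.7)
The plan is to verify each of the five axioms (i)--(v) of the definition of a potential pre-Frobenius domain directly, using the lemmas proved in Section~\ref{S:PDE} as the supply of building blocks. Essentially no new geometric input is needed beyond Lemmas~\ref{L:Aff} and~\ref{L:rank2,3} and Corollary~\ref{C:con}; the remainder is a tensorial bookkeeping exercise assembling the objects $\Phi$, $g$, $A$, $\con{0}$ into a single structured quintuple.

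For axiom~(i), Lemma~\ref{L:Aff} provides the affine flat structure on $\mathscr{D}$, while Corollary~\ref{C:con} supplies the associated flat torsionless connection $\con{0}$. For axiom~(ii), Lemma~\ref{L:rank2,3} produces the non-degenerate Hessian symmetric bilinear form $g_{ij}=\partial_i\partial_j\Phi$; the compatibility of $g$ with $\con{0}$ is checked by passing to flat coordinates (granted by~(i)), where $\con{0}(dx^i)=0$ and $\con{0}_k g_{ij}=\partial_k\partial_i\partial_j\Phi$ is fully symmetric in $(i,j,k)$. This symmetry is exactly the Codazzi relation for $g$ with respect to $\con{0}$ discussed in Sec.~\ref{S:Codazzi}. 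For axiom~(iii), the same Lemma~\ref{L:rank2,3} furnishes the rank three totally symmetric tensor $A_{ijk}=\partial_i\partial_j\partial_k\Phi$ with $\Phi$ as its potential, so that $A(X,Y,Z)=(XYZ)\Phi$ for the flat vector fields $X=\partial_i$, $Y=\partial_j$, $Z=\partial_k$.

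For axiom~(iv), I would define the composition $\circ\colon \cT_{\mathscr{D}}\times\cT_{\mathscr{D}}\to\cT_{\mathscr{D}}$ by partial dualization of $A$ through $g^{-1}$, following the mechanism of Eq.~\eqref{E:Dual}: in flat coordinates, $\partial_a\circ\partial_b:=\sum_c A^c_{ab}\,\partial_c$ with $A^c_{ab}:=\sum_e A_{abe}\, g^{ec}$. Non-degeneracy of $g$ makes $g^{ec}$ well defined, symmetry of $A$ in $(a,b)$ makes $\circ$ commutative, and the tensorial nature of the construction ensures global coherence on $\mathscr{D}$. For axiom~(v), a one-line computation in flat coordinates gives
\[
g(\partial_a\circ\partial_b,\partial_d)=\sum_c A^c_{ab}\, g_{cd}=\sum_{c,e} A_{abe}\, g^{ec}\, g_{cd}=\sum_e A_{abe}\,\delta^e_d=A_{abd},
\]
and the companion identity $g(\partial_a,\partial_b\circ\partial_c)=A_{abc}$ then follows from the full symmetry of $A$ established in~(iii).

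Combining the five verifications yields the quintuple $(\mathscr{D},\Phi,g,A,\con{0})$ as a potential pre-Frobenius domain, and a manifold structure on $\mathscr{D}$ upgrades it to a potential pre-Frobenius manifold. I do not anticipate any serious obstacle: all the analytic content has been absorbed into the preceding lemmas through the Monge--Ampère data, so what remains is the formal check that the pieces fit together. The subtlest point is the compatibility clause in~(ii), and this is handled cleanly by the Codazzi identity for Hessian metrics derived in Sec.~\ref{S:Codazzi}.
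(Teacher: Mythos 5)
Your proposal is correct and follows essentially the same route as the paper: axioms (i)--(iii) are discharged by Lemma~\ref{L:Aff}, Corollary~\ref{C:con} and Lemma~\ref{L:rank2,3}, the product is defined by partial dualization $A^c_{ab}=\sum_e A_{abe}g^{ec}$, and axiom (v) is the same flat-coordinate computation. The only difference is that you make explicit the compatibility clause in (ii) via the Codazzi/total-symmetry argument, which the paper merely asserts; this is a welcome clarification rather than a deviation.
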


\begin{proof}

\begin{enumerate}[label=\alph*)]
    \item By Lemma~\ref{L:Aff} and Corollary \ref{C:con} the domain $\mathscr{D}$ has an affine flat structure. Therefore, this satisfies  the statement (i) in the above definition.

 \item[]
 \item By Lemma~\ref{L:rank2,3}, there exists a symmetric (non degenerate) bilinear form $g$.  In local coordinates, $g_{ij}$ is by Eq.~\eqref{E:EQ1}. Therefore, this satisfies  the statement (ii) in the above definition. This metric is compatible with the connection $\con{0}$. 

 \item[]
\item There exists a symmetric (non degenerate) rank three tensor $A$, defined in Eq.~\eqref{E:EQ2}. Therefore, this satisfies  the statement (iii) in the above definition. 

 \item[]
\item  The multiplication operation $\circ$ on $\cT_M$ can be defined using the covariant derivative. In particular, we have $\nabla_{X}(Y):=X\circ Y.$  In local coordinates, this is defined as: $\partial_a\circ \partial_b =\sum_{c}A_{ab}^c\partial_c,$ where $A_{ab}^c=\sum_eA_{abe}g^{ec}$ and $g^{ec}$ is the contravariant. So, (iv) is satisfied. 

 \item[]
\item  Let us check the equality in (v). In local coordinates: 
\end{enumerate}
\[g(\partial_a \circ \partial_b,\partial_c)= g(\sum_{e}A_{ab}^e\partial_e,\partial_c)= \sum_{e}A_{ab}^eg(\partial_e,\partial_c) =\sum_{e}\sum_{f} A_{abf} g^{fe} g_{ec}=A_{abc}= \partial_a\partial_b\partial_c\Phi.\]

 On the other side, 
 \[g(\partial_a, \partial_b \circ\partial_c)=g(\partial_a,\sum_{e}A_{bc}^e\partial_e)=\sum_{e}A_{bc}^eg(\partial_a,\partial_e)=\sum_{e}\sum_{f} A_{bcf} g^{fe} g_{ea}=  A_{bca}=\partial_b\partial_c\partial_a\Phi.\]
 By symmetry of $A$, we have $\partial_b\partial_c\partial_a\Phi=\partial_a\partial_b\partial_c\Phi$.
Therefore,  $g(\partial_a \circ \partial_b,\partial_c)= g(\partial_a, \partial_b \circ\partial_c)=\partial_a\partial_b\partial_c\Phi.$
 In other words, for flat vector fields this is: 
 $$A(X,Y,Z)=g(X\circ Y,Z)=g(X,Y\circ Z).$$
 Therefore, we have demonstrated that  $(\mathscr{D},\Phi,g,A, \con{0})$ provides a pre-Frobenius structure on $\mathscr{D}$. 
\end{proof}

In particular, an immediate corollary is that:

\begin{cor}
    The Monge--Ampère manifold forms a pre-Frobenius manifold.
\end{cor}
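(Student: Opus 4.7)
The plan is to observe that this corollary is essentially a specialization of Theorem~\ref{T:main} to the case where the domain $\mathscr{D}$ is a manifold. By the author's own convention (stated just before Sec.~\ref{S:GEMA}), a Monge--Ampère manifold is simply a pair $(\mathscr{D},\Phi)$ of a GEMA in which $\mathscr{D}$ happens to carry a manifold structure, and this coincides with the notion used in \cite[Sec.~3.2]{KoS01}. So the first step is to unpack the definitions and make this identification explicit, which removes all ambiguity about what has to be checked.

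Second, I would invoke Theorem~\ref{T:main} directly. That theorem promotes the raw GEMA data $(\mathscr{D},\Phi)$ to the quintuple $(\mathscr{D},\Phi,g,A,\con{0})$ via the canonical constructions of Lemma~\ref{L:Aff}, Corollary~\ref{C:con}, and Lemma~\ref{L:rank2,3}, namely the affine flat structure, the flat torsionless connection $\con{0}$, the Hessian metric $g_{ij}=\partial_i\partial_j\Phi$, and the rank three symmetric tensor $A_{ijk}=\partial_i\partial_j\partial_k\Phi$. Since Theorem~\ref{T:main} verifies axioms (i)--(v) of a potential pre-Frobenius structure in the manifold case as well, no additional work is needed beyond quoting it.

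The last step is purely cosmetic: one only needs to observe that the product $\circ$ defined from $A$ and $g^{-1}$, together with the identity $A(X,Y,Z)=g(X\circ Y,Z)=g(X,Y\circ Z)$ verified in part (e) of the proof of Theorem~\ref{T:main}, continues to make sense pointwise on a manifold since all constructions are local and tensorial. Hence every fiber of $\cT_\mathscr{D}$ inherits the required algebraic data, and $(\mathscr{D},\Phi,g,A,\con{0})$ fulfils Definition~\ref{D:preF+F} minus the associativity condition, i.e.\ it is a potential pre-Frobenius manifold.

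There is no genuine obstacle here; the only thing to be careful about is terminology, namely that ``pre-Frobenius'' in the corollary is shorthand for ``potential pre-Frobenius'' in the sense of Sec.~\ref{S:2}, and that the assumption that $\mathscr{D}$ is a manifold (rather than a mere domain) is precisely what the parenthetical ``resp.\ manifold'' in Theorem~\ref{T:main} was designed to accommodate. The corollary then follows in one line.
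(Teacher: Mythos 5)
Your proposal is correct and matches the paper's approach exactly: the paper presents this corollary as an immediate consequence of Theorem~\ref{T:main} (which is stated for a domain, resp.\ manifold), with no further argument given. Your additional remarks on terminology and on the local, tensorial nature of the constructions are accurate but not needed beyond citing the theorem.
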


\subsection{Frobenius loci and Associativity Equations} We discuss the possibility that $(\mathscr{D},\Phi,g,A, \con{0})$ contains a locus satisfying the Associativity Equations. Such a locus is called a {\it Frobenius locus}.

\begin{thm}\label{T:preF}
Let  $(\mathscr{D}, \Phi, g, A, \con{0})$ be the geometrization of EMA, where $\mathscr{D}$ is a strictly convex bounded domain and $\Phi$ is a smooth convex function with smooth boundary values.
Assume that it contains a non-empty flat locus $\mathscr{N}$. Then, $\mathscr{N}$ is a Frobenius locus and the WDVV equation is satisfied on $\mathscr{N}$.
\end{thm}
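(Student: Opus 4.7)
The plan is to apply Lemma \ref{L:1.5.1} to the flat locus $\mathscr{N}$, and then invoke Definition \ref{D:preF+F} together with the remark that a pre-Frobenius manifold is Frobenius precisely when the Associativity Equation holds. First, I would restrict all the data. By Theorem \ref{T:main}, the ambient $(\mathscr{D},\Phi,g,A,\con{0})$ is a potential pre-Frobenius domain: $\con{0}$ is the flat affine connection from Lemma \ref{L:Aff}; $g$ is the Hessian metric $\partial_i\partial_j\Phi$; $A$ is the symmetric rank three tensor $\partial_i\partial_j\partial_k\Phi$; and the product $\circ$ with compatibility (v) comes from partial dualization of $A$ via $g^{-1}$. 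Restricting to $\mathscr{N}\subset\mathscr{D}$, each of these structures pulls back without obstruction: $\Phi|_{\mathscr{N}}$ remains smooth and convex, $g|_{\mathscr{N}}$ is still Hessian with potential $\Phi|_{\mathscr{N}}$, and axioms (i)--(v) carry over to $\mathscr{N}$.

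Second, I would exploit the flatness hypothesis. By assumption the Riemannian curvature of $g|_{\mathscr{N}}$ vanishes, so $(\mathscr{N},g|_{\mathscr{N}},\con{0}|_{\mathscr{N}})$ is a flat Hessian manifold. This is exactly the hypothesis of Lemma \ref{L:1.5.1}, so the Associativity (WDVV) Equation
\[
\sum_{e,f}\partial_a\partial_b\partial_e\Phi\,g^{ef}\,\partial_f\partial_c\partial_d\Phi
=\sum_{e,f}\partial_b\partial_c\partial_e\Phi\,g^{ef}\,\partial_f\partial_a\partial_d\Phi
\]
holds identically on $\mathscr{N}$. This immediately gives the second conclusion of the theorem; one may optionally repackage it in Codazzi form via Proposition \ref{P:Codazzi1}.

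Third, combining the preserved pre-Frobenius data with the Associativity Equation yields, via Definition \ref{D:preF+F} together with the remark after it (which identifies Frobenius-ness with flatness of the pencil $\con{t}$ of Equation \eqref{E:Connections}, equivalently with WDVV), that $(\cT_{\mathscr{N}},\circ)$ is a pointwise Frobenius algebra, so $\mathscr{N}$ is a Frobenius locus inside $\mathscr{D}$.

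The main obstacle I anticipate is pinning down what "flat locus" selects. If flatness means vanishing of the Riemannian curvature of $g$ restricted to $\mathscr{N}$, then Lemma \ref{L:1.5.1} applies verbatim and the argument is essentially a restriction plus citation. A subtler issue is verifying that all pre-Frobenius data genuinely restricts to $\mathscr{N}$: in particular, that the flat affine coordinates on $\mathscr{D}$ induce admissible affine coordinates on $\mathscr{N}$, so that Equations \eqref{E:EQ1} and \eqref{E:EQ2} retain their Hessian and rank-three form on the sublocus. This would likely require either that $\mathscr{N}$ be an open sublocus of $\mathscr{D}$ (in which case the restriction is automatic) or a local normal-form argument at each point of $\mathscr{N}$ guaranteeing compatibility of the affine structures. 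Granting that, the rest of the proof is a direct assembly of Theorem \ref{T:main}, Lemma \ref{L:1.5.1}, and Definition \ref{D:preF+F}.
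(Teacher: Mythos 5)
Your proposal is correct and follows essentially the same route as the paper: restrict the pre-Frobenius data to $\mathscr{N}$ and use flatness of the Hessian metric to force the WDVV equation. The only cosmetic difference is that the paper re-derives the curvature formula $R_{acdb}=\sum_{e,f}g^{ef}(A_{eab}A_{fcd}-A_{ead}A_{fcb})$ inline instead of citing Lemma~\ref{L:1.5.1}, and it is no more careful than you are about justifying that the structure genuinely restricts to $\mathscr{N}$.
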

\begin{proof}
By construction $\mathscr{N}\subset \mathscr{D}$ inherits a pre-Frobenius structure. 
By hypothesis, $\mathscr{N}$ is flat. 

Let us compute the covariant derivative $A_{acd;b}$ of the tensor $A_{acd}$. Recall that $\sum_e g^{ef}\cdot A_{ecd}=A_{cd}^f$. The covariant derivative $A_{acd;b}$ of the rank three symmetric tensor is given by the following formula: 
 \begin{equation}\label{}
A_{acd;b}=-\frac{1}{2}\frac{\partial^4 \Phi}{\partial x^a\partial  x^c\partial x^d \partial x^b}+ \sum_{e,f} g^{ef}(A_{ecd}A_{fab}+A_{aed}A_{fcb}+A_{ace}A_{fdb}).
\end{equation}

This is symmetric in all pairs of indices. 
From this, one can verify that the Riemann curvature tensor can be expressed in terms of the rank three symmetric tensors as follows:
 \begin{equation}\label{E:C}
 R_{acdb}=\sum_{e,f} g^{ef}(A_{eab}A_{fcd}-A_{ead}A_{fcb}).
 \end{equation}
By the flatness hypothesis on $\mathscr{N}$, the curvature tensor vanishes on $\mathscr{N}$. Therefore, from Eq.\ref{E:C} it follows that we obtain the equality  $\sum_{e,f} g^{ef}A_{eab}A_{fcd}=\sum_{e,f} g^{ef}A_{ead}A_{fcb}.$ 
Naturally this can be expressed as $\sum_{e,f} A_{eab}g^{ef}A_{fcd}=\sum_{e,f} A_{ead}g^{ef}A_{fcb}.$  
By symmetry of the rank 3 symmetric tensors, we can rewrite this as $\sum_{e,f} A_{abe}g^{ef}A_{fcd}= \sum_{e,f} A_{fcb}g^{ef}A_{ead}$.   
Therefore, it follows that on $\mathscr{N}$ the following holds:
 \begin{equation} \forall a,b,c,d: \quad 
\sum_{e,f}\partial_a\partial_b\partial_e\Phi g^{ef}\partial_f \partial_c\partial_d\Phi= \sum_{e,f} \partial_f\partial_c\partial_b\Phi g^{ef}\partial_e\partial_a\partial_d\Phi. 
\end{equation}
In other words, those are the Associativity Equations. Therefore, if there exists a flat pre-Frobenius locus $\mathscr{N}$ in a pre-Frobenius domain then it satisfies  the WDVV equation and forms therefore a Frobenius locus. \end{proof}

Therefore, this statement allows to establish geometrically a relation between the elliptic Monge--Amp\`ere equation and the WDVV equation. 

\begin{prop}\label{P:Codazzi2}
 Let $\mathscr{N}$ be an $m$-dimensional submanifold of an $(m+k)$-dimensional domain $\mathscr{D}$. Suppose that there exists an isometric immersion $f:\mathscr{N}\to \mathscr{D}$. The domain $\mathscr{D}$ lies in the Euclidean space such that $\mathscr{D}$ is a GEMA. 
The submanifold $\mathscr{N}$ is Frobenius if and only if the Riemannian curvature tensor of $\mathscr{D}$ is expressed as: \[R(X,Y,Z,W)=g(\alpha(X,Z),\alpha(Y,W))-g(\alpha(Y,Z),\alpha(X,W))\]
where $\alpha$ is a second fundamental form for the given immersion and $X,Y,Z,W$ are vector fields on $\mathscr{N}$.
\end{prop}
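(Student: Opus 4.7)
The approach is to recognize the displayed equality as a Gauss-equation condition that forces the intrinsic curvature of $\mathscr{N}$ to vanish, after which Theorem \ref{T:preF} yields the Frobenius property.

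First I would write down the Gauss equation for the isometric immersion $f:\mathscr{N}\to\mathscr{D}$: for vector fields $X,Y,Z,W$ tangent to $\mathscr{N}$,
$$R^{\mathscr{D}}(X,Y,Z,W) = R^{\mathscr{N}}(X,Y,Z,W) + g(\alpha(X,Z),\alpha(Y,W)) - g(\alpha(Y,Z),\alpha(X,W)),$$
where $R^{\mathscr{N}}$ is the intrinsic Riemann curvature of $\mathscr{N}$. Comparing this with the formula in the statement, one sees at once that the displayed equality holds if and only if $R^{\mathscr{N}}\equiv 0$, i.e.\ $\mathscr{N}$ is flat.

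Second, since $\mathscr{D}$ is a GEMA, Theorem \ref{T:main} equips it with a pre-Frobenius structure $(\Phi,g,A,\con{0})$. The isometric immersion $f$ pulls this data back to $\mathscr{N}$: the Hessian metric becomes the induced Riemannian metric, the flat torsionless $\con{0}$ restricts to $\mathscr{N}$, and the rank-three symmetric tensor $A$ descends to the third covariant derivative of the restricted potential. Theorem \ref{T:preF} then says that flatness of $\mathscr{N}$ is equivalent to $\mathscr{N}$ being a Frobenius locus, via the formula Eq.~(\ref{E:C}) expressing $R^{\mathscr{N}}$ in terms of $A$ and $g^{-1}$, whose vanishing coincides with the WDVV identity. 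Combining this equivalence with the Gauss equation gives the claimed biconditional.

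The principal obstacle is to justify rigorously that the pre-Frobenius data on $\mathscr{D}$ restrict coherently along $f$, and, in particular, that the induced connection on $\mathscr{N}$ used in the Gauss equation coincides with the pulled-back $\con{0}$ used in Theorem \ref{T:preF} (as happens intrinsically on any Hessian submanifold whose affine structure is inherited from the ambient flat chart). Once this compatibility is in hand, the converse implication — that a Frobenius submanifold satisfies the stated curvature identity — follows from the same Gauss equation applied in reverse, together with the fact that formula~(\ref{E:C}) forces $R^{\mathscr{N}}=0$ as soon as the WDVV equation holds.
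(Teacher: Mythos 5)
Your proposal is correct and follows essentially the same route as the paper: the Gauss equation identifies the displayed curvature identity with the vanishing of the intrinsic curvature of $\mathscr{N}$, and Theorem~\ref{T:preF} (together with Eq.~\eqref{E:C}) converts flatness into the Frobenius property and back. Your extra remark about checking that the pre-Frobenius data of $\mathscr{D}$ restrict coherently along $f$ is a point the paper passes over silently, but it does not change the argument.
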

\begin{proof}
 Assume that $\mathscr{N}$ is a Frobenius manifold. Then, by construction, $\mathscr{N}$ is flat. Let $X$ and $Y$ be vector fields on $\mathscr{N}$. At every point in $\mathscr{N}$, one defines $\tilde{\nabla}_X(Y)$ the tangential component and by $\alpha(X,Y)$ its normal component. One has:
 \[\nabla_{X}Y=\tilde{\nabla}_{X}Y+\alpha(X,Y),\]
 where $\tilde{\nabla}$ is a covariant derivative of a Levi--Civita connection on $\mathscr{N}$ and $\alpha(X,Y)$, is a differentiable field of normal vectors to $\mathscr{N}$.

 By Gauss equations, we have the following relation
\[R(W, Z, X, Y) = R(W, Z, X, Y) + g(\alpha(X, Z), \alpha(Y, W))- g(\alpha(Y, Z),\alpha(X, W)),\]
where $X, Y, Z,$ and $W$ are arbitrary tangent vectors to $\mathscr{N}$, $R$ is the Riemannian curvature tensor of $\mathscr{D}$ and $R'$ is the Riemannian curvature tensor of $\mathscr{N}$. Since $\mathscr{N}$ is flat, $R(W, Z, X, Y) =0$. Hence the relation $R(W, Z, X, Y) = g(\alpha(X, Z), \alpha(Y, W))- g(\alpha(Y, Z),\alpha(X, W))$ for vector fields $X, Y, Z,$ and $W$ on  $\mathscr{N}$.

Reciprocally, if $R(W, Z, X, Y) = g(\alpha(X, Z), \alpha(Y, W))- g(\alpha(Y, Z),\alpha(X, W))$ for vector fields $X, Y, Z,$ and $W$ on  $\mathscr{N}$ then it means that $R=0$. This exactly means that $\mathscr{N}$ is flat. By Theorem \ref{T:preF} it satisfies the WDVV equation. So, it is a Frobenius manifold. 
 
\end{proof}


 \section{Landau--Ginzburg theory and its relations to Frobenius manifolds}\label{S:LG1}
We examine the theory proposed by Landau and Ginzburg to explain the phenomenon of superconductivity, in the light of Frobenius manifolds. This theory remains interesting for the community of solid state and elementary particle physicists. Appendix~\ref{S:LG} presents an introduction to LG theory.  In \cite{Cecotti,LW,Chiodo}, LG models are applied in the context of Saito spaces and Calabi--Yau manifolds. We stick to the original LG construction to prove that the Hilbert space coming from the LG theory is parametrised by a Monge-Ampère domain. This step serves as an in-between step for proving our final statement. We explain how the Monge--Ampère manifold can be obtained.

\subsection{Monge problem}
Originally, the Monge problem was to find the optimal transport $T:\R^d\to \R^d$ between two distribution of masses $\rho_1$ in a domain $A\subset \R^d$ to a distribution $\rho_2$
on a domain $B\subset \R^d$ such that $$\int_A \rho_1 dx=\int_B \rho_2 dx$$ and the total mass remained preserved under transport. 
\subsection{Measures}
Let $(\Omega,\mathcal{F},P)$ be a  measure space. A measure $P$ is said to be absolutely continuous w.r.t the measure $\lambda$ if for every measurable set $C$, the equality $\lambda(C)=0$ implies $P(C)=0.$  The measurable function is given by $P(C)=\int_C  \rho d\lambda,$ $\forall C \subset \mathcal{F}$, where $\rho$ is called the density of the measure $P$ and $\rho= \frac{dP}{d\lambda}$ is the Radon--Nikodym derivative.

\subsection{Monge--Ampère}\label{S:8.3}
Consider the (probability) measure space $(\R^d,\mathcal{F},\mu)$, where $\mathcal{F}$ is formed from Borel sets in $\R^d$ and $\mu$ is a Borel measure.
By $\mathcal{P}(\R^d)$ we denote the space of Borel probability measures on $\R^d$.

Let $T:\R^d \to \R^d$ be a Borel transformation defined $\mu$-almost everywhere such that if the measure of a measurable set $A\subset \mathcal{K}\subset \R^d$ (where $\mathcal{K}$ is a compact)  vanishes, then the measure of $T^{-1}(A)$ vanishes too. Given a Borel set $M \subset \R^d$ one can {\it pushforward} $\mu$ through $T$, resulting in a Borel probability measure $T_{\diamond}\mu$ on $\R^d$ given by: 
\begin{equation}\label{E:nu}
    T_{\, \diamond\, }\mu[M]:=\mu[T^{-1}(M)].
\end{equation}

By~\cite{Bre1,Bre2}, there exists a unique map $h$ such that $$T=\nabla U\circ h,$$ where $h$ preserves the volumes and $U$ is a convex Lipschitz function in the neighborhood of $\mathcal{K}$.

Obtaining the Monge-Ampère equation requires a few additional steps. Let $V^*$ be the Legendre transform of $U$ that is $V=U^*$. By Brenier's factorisation theorem~\cite{Bre1}, $\nabla U$ and $T$ map $\mathcal{K}$ into the same set $\mathcal{K}^*$ that is 
$T(\mathcal{K})=\nabla U(\mathcal{K})=\mathcal{K}^*$ and $\nabla V(\mathcal{K}^*)=\mathcal{K}.$
For any continuous function $f\in C^1$ on $\mathcal{K}$ we have the following:

\begin{equation}\label{E:MA-Brenier}
    \int_\mathcal{K} f(T(x))dx=\int_\mathcal{K} f(\nabla U(x))dx=\int_{\mathcal{K}^*} f(y)\det (D^2 V(y))dy
\end{equation}
where we have the change of variables $y=\nabla u(x)\in \mathcal{K}^*$ and $x=\nabla V(y)\in \mathcal{K}$.
One deduces the existence of a positive function $r(x)$, integrable in $\mathcal{K}^*$ such that 
\[\int_\mathcal{K} f(T(x))dx=\int_{\mathcal{K}^*} f(y)r(y)dy.\]
Therefore, we have that $V$ is a solution to the Monge--Ampère problem. The function $V$ is convex and we have $\det D^2 V(y)=r(y)$ almost everywhere on $\mathcal{K}^*.$

\subsection{Discretisation}
\label{S:Conf} 
We discuss a discrete version of the situation above. Consider a given set of $m$ objects in $\R^d$ all having the same mass and a  Monge-Ampère transport. This allows us to work with configuration spaces of $m$ marked points in $\R^d$.

\smallskip 
Given a smooth topological space $X$ and a positive integer $m$, define $X[m]:=X^m\setminus \Delta$ to be the $m$-th (ordered) configuration space of $X$, where $X^m$ is the Cartesian product of $m$ copies of $X$ (equipped with product topology) and $\Delta$ is the fat diagonal generated by $x_i=x_j$. This is the set of $m$-tuples of pairwise distinct points in $X$. 
\begin{lem}\label{L:Borel}
 To each configuration of $X[m]$ corresponds a counting measure defined on a Borel $\sigma$-algebra.   
\end{lem}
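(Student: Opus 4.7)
The plan is to associate to each configuration $\sigma = (x_1,\ldots,x_m) \in X[m]$ the finite sum of Dirac masses
\[
\mu_\sigma := \sum_{i=1}^{m}\delta_{x_i},
\]
where $\delta_{x_i}$ denotes the Dirac measure concentrated at $x_i$. For any Borel set $B\in \mathcal{B}(X)$, this yields $\mu_\sigma(B) = \#\{\, i \in \{1,\ldots,m\} : x_i \in B\,\}$, which is precisely the counting measure of the finite set $\{x_1,\ldots,x_m\}\cap B$. So the content of the lemma is to produce this assignment $\sigma \mapsto \mu_\sigma$ and check that $\mu_\sigma$ is a bona fide measure on a Borel $\sigma$-algebra.

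First I would equip $X$ with its Borel $\sigma$-algebra $\mathcal{B}(X)$ generated by the open sets of the underlying topology; since $X$ is smooth and in particular Hausdorff, each singleton $\{x_i\}$ is closed, hence Borel, and each Dirac measure $\delta_{x_i}$ is a well-defined Borel probability measure on $(X,\mathcal{B}(X))$. Next I would verify the measure axioms for $\mu_\sigma$: non-negativity and $\mu_\sigma(\emptyset)=0$ are immediate; countable additivity follows because a finite non-negative linear combination of measures is again a measure, and $\mu_\sigma$ is by construction such a combination of $m$ Dirac measures. Finally, the defining condition of $X[m]$ (removal of the fat diagonal $\Delta$) guarantees that the $x_i$ are pairwise distinct, so $\mu_\sigma$ has exactly $m$ atoms of mass one; equivalently, it coincides with the classical counting measure on the finite subset $\{x_1,\ldots,x_m\}\subset X$, extended by zero outside this subset.

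The main obstacle here is essentially cosmetic: the statement is a construction rather than a deep theorem, and the only point requiring care is ensuring that singletons are Borel (which is automatic once $X$ is Hausdorff). One small additional remark worth inserting is that the assignment $\sigma\mapsto \mu_\sigma$ is invariant under the natural $S_m$-action permuting the coordinates, so it descends to the unordered configuration space $X[m]/S_m$; this aligns the discrete construction with the absolutely continuous setting of Sec.~\ref{S:8.3}, where $\mu_\sigma$ plays the role of a discretized mass distribution to which the Monge--Ampère transport can subsequently be applied.
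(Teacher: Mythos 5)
Your proposal is correct and follows essentially the same route as the paper: the paper's proof likewise takes the Borel $\sigma$-algebra generated by the topology on $X$ and counts the marked points in each Borel set, which is exactly your sum of Dirac masses; you merely make explicit the verification of the measure axioms and the role of the Hausdorff property. (The paper later normalises by dividing by $m$ to obtain a probability measure, but that is done outside the lemma, so your unnormalised counting measure matches the statement as given.)
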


 \begin{proof}
    Assume $(X,\mathcal{U})$ is the topological space $X$ endowed with its topology $\mathcal{U}$. Then, the open sets $U_{\alpha}$ of the topology $\mathcal{U}$ generate a $\sigma$-algebra.
    This forms thus a {\it Borel} $\sigma$-algebra. On each Borel set we count how many marked points exist. This can be obtained by using a counting measure, which is a Borel measure.
 \end{proof}

So, for a configuration space $X[m]$, where $X=\R^d$ we can consider the measure space $(\Omega,\mathcal{F},\mu)=(\R^d,\mathcal{F},\mu)$ where:

\begin{itemize}
    \item $\mathcal{F}$ is the $\sigma$-algebra generated by Borel sets of $\R^d$. 
    \item Given a Borel set $M$, the measure $\mu$ is taken to be the  counting measure given by the number of marked points in $M$ divided by the total number of marked points (here $m$).
\end{itemize}

\begin{prop}\label{P:Conf}
Let $X=\R^d$ and $X[m]$ the configuration space of $m$ marked points on $\R^d$. 
Then, there exists a class of paths in the configuration space $X[m]$ can be parametrised by a transport $T$. 
\end{prop}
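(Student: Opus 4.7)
The plan is to identify paths in $X[m]$ with trajectories obtained by applying a Monge--Amp\`ere transport pointwise to a fixed configuration, invoking the Brenier factorization recalled in Section~\ref{S:8.3}.

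First, I would attach to any configuration $\mathbf{x}=(x_1,\ldots,x_m)\in X[m]$ its normalized counting measure $\mu_{\mathbf{x}}=\frac{1}{m}\sum_{i=1}^m \delta_{x_i}$, which by Lemma~\ref{L:Borel} is a Borel probability measure on $\R^d$. A Brenier transport $T=\nabla U\circ h$ with strictly convex potential $U$ and volume-preserving $h$ then pushes $\mu_{\mathbf{x}}$ to $T_{\diamond}\mu_{\mathbf{x}}$ in the sense of equation~\eqref{E:nu}, and the support of the pushforward is precisely the new configuration $T(\mathbf{x})=(T(x_1),\ldots,T(x_m))$. At this stage the transport already exhibits the two endpoints of a prospective path.

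Second, to promote these endpoints into a continuous trajectory, I would use the displacement interpolation $T_t:=(1-t)\,\mathrm{Id}+tT$ for $t\in[0,1]$ and set $\gamma_T(t):=(T_t(x_1),\ldots,T_t(x_m))$. The key technical point is that strict convexity of $U$ makes $\nabla U$ strictly monotone, hence injective; this property is preserved under convex combinations with the identity, so $T_t$ is injective for every $t\in[0,1]$ and $\gamma_T(t)$ never meets the fat diagonal $\Delta$. Consequently the path stays inside $X[m]=X^m\setminus\Delta$, continuity in $t$ is immediate from the affine dependence, and $\gamma_T$ connects $\mathbf{x}$ to $T(\mathbf{x})$. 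Letting $T$ vary over all admissible Brenier transports on $\R^d$ produces the announced class of paths, each parametrised by its associated transport.

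The main obstacle is a regularity issue: Brenier's theorem only provides $\nabla U$ almost everywhere with respect to Lebesgue measure, whereas $\mu_{\mathbf{x}}$ is purely atomic and supported on a Lebesgue-negligible set, so pointwise evaluation of $\nabla U$ at the $x_i$'s is not automatically well-defined. To bypass this, I would either mollify $\mu_{\mathbf{x}}$ into smooth absolutely continuous measures $\mu_\epsilon$, apply Caffarelli's regularity theory to obtain smooth potentials $U_\epsilon$, and pass to the limit $\epsilon\to 0$, or restrict attention to the generic subclass of transports whose Brenier potential is $C^1$ on a neighborhood of $\{x_1,\ldots,x_m\}$, using Alexandrov differentiability of convex functions to ensure this locus is non-empty and large enough to be the natural parametrizing space of the paths.
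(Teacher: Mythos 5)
Your proposal follows the same conceptual route as the paper: associate to each configuration its counting measure via Lemma~\ref{L:Borel}, push it forward through a transport in the sense of Eq.~\eqref{E:nu}, and read off a path in $X[m]$ parametrised by that transport. The difference is one of completeness rather than of strategy. The paper's own proof stops at the level of endpoints --- it observes that two configurations correspond to two measures related by a pushforward and then asserts that this ``parametrizes a path'' --- whereas you actually build the path: the displacement interpolation $T_t=(1-t)\,\mathrm{Id}+tT$ (which the paper only mentions in a remark after Theorem~\ref{T:LG}), the monotonicity argument showing $T_t$ is injective so that $\gamma_T(t)$ never hits the fat diagonal $\Delta$, and continuity in $t$. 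These are exactly the steps needed to justify that the curve lives in $X[m]=X^m\setminus\Delta$ rather than merely in $X^m$, and the paper omits them. You also flag the genuine regularity obstruction --- Brenier's theorem gives $\nabla U$ only Lebesgue-a.e., while $\mu_{\mathbf{x}}$ is purely atomic --- which the paper does not address at all; your mollification or genericity workaround is reasonable, though for two uniform atomic measures on $m$ points the cleanest fix is to note that the optimal transport reduces to an optimal matching (a permutation minimising total quadratic cost), whose cyclical monotonicity directly yields the non-crossing property of the interpolated trajectories. One small caution: your injectivity argument is stated for $\nabla U$, but the factorisation $T=\nabla U\circ h$ includes the measure-preserving rearrangement $h$, and convex combinations of $\mathrm{Id}$ with $\nabla U\circ h$ need not be monotone; you should either take $h=\mathrm{Id}$ (i.e.\ work with the optimal map itself) or interpolate between $h(\mathbf{x})$ and $T(\mathbf{x})$ rather than between $\mathbf{x}$ and $T(\mathbf{x})$. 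With that adjustment your argument is correct and strictly more rigorous than the one in the paper.
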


\begin{proof}
Take two different configurations in $X[m]$. They correspond to a pair of distinct points, say {\bf x} resp. {\bf y}. Such a pair of points can be related by a path in the space of configurations. 

Given that each configuration in $X[m]$ corresponds to a Borel probability measure on $\R^d$ (cf. Lemma \ref{L:Borel}), we can push forward the measure $\mu$ (illustrating the configuration {\bf x}) to another Borel measure $\nu$. This is done by using Eq. \eqref{E:nu}. 

Therefore, we can define an optimal transport which parameterizes a path in $X[m]$. 
\end{proof}

\subsection{Landau--Ginzburg theory à la Koopman-von Neumann theory versus Landau--Ginzburg model}\label{S:psi} 
Let us discuss the LG theory versus the LG model.  

\medskip 

An LG model is defined as a pair $(X,W)$, where $X$ is a non-compact K\"ahler manifold and $W:X\to \C$ is a holomorphic Morse function, called superpotential. Identifying the superpotential $W$ with a quasi homogeneous polynomial, the zero locus $X_W=\{W=0\}$ defines a hypersurface in a weighted projective space. Some additional conditions imply that the hypersurface $X_W$ is Calabi--Yau.

\medskip 

The LG theory viewed from the Koopman-von Neumann perspective {\it implies} the LG model. However, the converse is not true. An explication of this fact is given below. An LG model captures a {\it local} aspect of the LG--KvN theory. However, it omits important structures existing in the LG theory. For instance, the LG--KvN theory is based on considerations of wave functions (also known as order parameters) and free energy which, as such, is {\it not} explicitly mentioned in the LG model. 

\medskip 

This wave function, in the LG theory, being an element of a Hilbert space $\mathfrak{H}$ has a very important property, which is fundamental for our construction. These wave functions are complex-valued square-integrable functions, defined over a phase space $\mathcal{M}$. In particular, 
if $x$ is a point in $\mathcal{M}$ then $\psi(x)$ is a wave function defined at $x$, where the coordinates of $x$ are given by $x=(q^1,\cdots,q^N,p^1,\cdots p^N)$ ; $q^i$ are the position and $p^i$ the momentum.
von Neumann 
\medskip 

The square of the absolute value of a wave function $\psi$ (i.e. as the amplitude multiplied with its own complex conjugate) denoted $|\psi|^2=\psi\overline{\psi}$ is crucial for expressing the free energy formula of LG theory. Furthermore, $\rho(\psi):=|\psi|^2$ has a meaning as a probability density. In particular, the function $\rho(\psi)$ is a positive semi-definite function. As $\psi(x)$ is square-integrable it turns that $\rho(\psi)$ is integrable (i.e. $L^1$).

\medskip 

\begin{itemize}

 \item[\ding{169}] Given $\psi\in \mathfrak{H}$ we have a normalisation of the Hilbert space in the sense that $\int_{\mathcal{M}} |\psi|^2 d\lambda=1$. This condition implies that $|\psi|^2=\psi\overline{\psi}$ is a {\it density} of probability;  the  function $\psi$ defines a probability measure $P$: $$P(C)=\int_C|\psi|^2 \, d\lambda$$ where $C\in\mathcal{F}$ and $|\psi|^2$ is the Radon--Nikodym derivative $\frac{dP}{d\lambda}$.
    
 \end{itemize}

Via the Koopman--von Neumann's construction, it is postulated that there is an evolution of the wave function $\psi(x,t)$ with respect to a real parameter $t$, given by the Liouville equation. Note that the Liouville equation is satisfied as well by the classical probability density $\rho(\psi)$:
\[\imath \frac{\partial\rho(\psi,t)}{\partial t}=\widehat{L}\rho(\psi,t),\]
where 
$\widehat{L}=\imath \frac{\partial \widehat{H}}{\partial q^i}\frac{\partial}{\partial p^i}-\frac{\partial \widehat{H}}{\partial p^i}\frac{\partial}{\partial q^i}$ and $\widehat{H}$ is a Hamiltonian operator of the LG theory. 

\begin{itemize}
    \item[\ding{169}] The Hermitianity of $\widehat{L}$ is a necessary and sufficient condition to guarantee the unitarity of the evolution operator,  given by 
    \begin{equation}\label{E:Ut}
         U_t=\exp{-\imath\widehat{L}t}
    \end{equation}        

\end{itemize}

\medskip 

  Let $\mathcal{M}$ be a variety (a phase space) and consider the group of automorphisms $S_t$ (where $t$ is an affine parameter) having the positive integral invariant 
  
  $\int \rho d\lambda$, where $\rho$ is a positive, single valued analytic function on $\mathcal{M}$ (the integral is taken over an arbitrary region on $\mathcal{M}$). 
  
  Let $\psi(x)$ be a complex-valued function at a point $x$ in $\mathcal{M}$ such that: $\psi$ is single-valued; $\psi$ is measurable; the Lebesgue integrals $\int_{\mathcal{M}} (d^{2n}x) |\psi| d\lambda$ and $\int_{\mathcal{M}} (d^{2n}x) |\psi|^2 d\lambda$ are finite.

The totality of such functions $\psi$ form a Hilbert space $\mathfrak{H}$, the metric of which is given by the inner product $\langle \psi,\varphi\rangle= \int_{\mathcal{M}}(d^{2n}x) \psi(x)\overline{\varphi}(x)$.

One defines the one-parameter group of unitary transformation maps $U_t$ on $ \mathfrak{H}$, continuous in $\psi$ as given by  
$$U_t\, \psi(x)=\psi(S_t\, x),$$ for all real $t$. The transformation $U_t$ is a unitary, linear endomorphism of $\mathfrak{H}$ such that:
$$U_t(a\psi(x)+ b \varphi(x))=aU_t(\psi(x))+ b U_t( \varphi(x))$$ and $\langle U_t\psi,U_t\varphi\rangle =\langle \psi,\varphi\rangle. $

For any $\psi \in \mathfrak{H}$, we have 

$$\bigg[\frac{\partial}{\partial t}U_t\psi(x)\bigg]_{t=0}=\imath {\bf P} \psi(x),$$ where ${\bf P}$ is a self-adjoint linear operator on $\mathfrak{H}$.
\medskip 

To construct the Monge--Ampère operator/domain we need to investigate the spacial evolution of the wave function $\psi$. To do this, we look for the free energy of the LG theory. That is one more reason to consider LG theory instead of the LG model.

\begin{rem}
The presented theory above can be viewed as the Koopman--von Neumann theory. The Koopman--von Neumann construction \cite{Koo,vN}, implies that the wave functions lie in a Hilbert space $\mathfrak{H}$. Axioms of Koopman--von Neumann imply that the wave functions lie in the Hilbert space of square integrable functions, with respect to a density function over a phase space. The probability density on phase space is expressed in terms of the underlying wave function, in the Hilbert space $\mathfrak{H}$.
\end{rem}

\begin{itemize}
 \item[\ding{169}]  On the other side,  the property Eq.\eqref{E:Ut} guarantees the {\it conservation of the total probability.} 
\end{itemize}
These structures presented above, form the basics for the construction of the Monge--Ampère domain in Theorem~\ref{T:LG}, Corollary~\ref{T:END} and Theorem~\ref{T:FINAL}.

\medskip 

\begin{itemize}
\item[\ding{169}] Following from the fact that  $\psi\in \mathfrak{H}$ and $\exp{\imath \theta}\, \psi\in \mathfrak{H}$ belong to the {\it same equivalence class} we have a torus action on $\mathfrak{H}$ given by $\psi(x) \mapsto \exp{\imath \theta} \, \psi(x)$, where $\theta\in \R$. In particular, this leads us to work in the framework of weighted projective spaces. A weighted projective space is given by $$\mathbb{P}_{(w_1,\cdots,w_n)}=\bigg(\C^{n+1}\setminus \{0\}/\C^{\times}\bigg),$$ where $(w_1,\cdots,w_n)$ are the weights and the weighted action is given by

 $g \cdot (x_1,\cdots, x_n)\mapsto (g^{w_1}x_1,\cdots, g^{w_N}x_n)$ for $g\in \C^{\times}$.

\end{itemize}

\subsubsection{}\label{S:Chiodo}
Using the results in \cite{Vafa,Chiodo,CR}, we go back  to the LG model. 
According to \cite[Sec.6]{Vafa}, one can ``characterize a large class of Calabi--Yau manifolds using the fixed points of the Landau-Ginzburg theory, under renormalization group flows''. One relates the superpotential of the Landau--Ginzburg model to defining equations of Calabi--Yau manifolds in a weighted projective space.  

Given that we have established that $(\mathfrak{H},\mathscr{Y},\pi)$ generates weighted projective spaces (see the previous construction) we choose to rely on~\cite{BH,K} to construct mirror pairs of Calabi--Yau manifolds/orbifolds. 
Let us define a hypersurface inside this weighted projective space. It is defined via a quasi homogeneous polynomial $W$ in the variables $x_1,\cdots,x_n$ such that 

$$W(s^{l_1}x_1,\cdots,s^{l_n}x_n)=
s^dW(x_1,\cdots,x_n),$$ where, $s\in \C^{\times}$; 
$l_i$ 
are positive rational numbers satisfying $l_i=w_i/d$ with $gcd(w_1,\cdots,w_n,d)=1$  and where $w_i$ are the {\it weights} of the projective space.  
Then, $$X_W:=\bigg( \{W=0\}_{ {\C^{n+1} \setminus \{0\} }} /\C^\times  \bigg)$$ defines a quotient stack lying in a weighted projective space. We require $W=0$ to have at most a unique isolate singularity at 0 and that it satisfies the Calabi--Yau condition i.e. the sum  $\sum \limits_{i=1}^nl_i=1$, implying that  $X_W$ is Calabi--Yau. It is naturally possible to omit the singular locus (or the corresponding divisor $D$ with normal crossings)  by considering $X_W\setminus D$.

Let $Aut(X_W)$ be the group of diagonal symmetries leaving $W$ invariant i.e. given an element $(a_1,\cdots, a_n)\in Aut(X_W)$ we have the following equality $$W(a_1x_1,\cdots,a_nx_n)= W(x_1,\cdots,x_n).$$ Consider $J$ a subgroup of the automorphism group $Aut(W)$ defined by \break
 $J=(\exp{2\pi\imath l_1},\cdots, \exp{2\pi\imath l_n})$. Then, $J$ has a trivial action on $X_W$.  One considers the group ${\bf G}$ given by the quotient {\bf G}=$G/J$, acting  faithfully on $X_W$, where $G$ is a subgroup of $Aut(X_W)$ containing $J$ and such that $J\subset G\subset SL_N(\C)$. The last requirement implies that $X_W/{\bf G}$ is Calabi--Yau.

By the Berglund–Hübsch–Krawitz approach, one can construct a {\it mirror dual } of such a Calabi-Yau manifold, say $X_{W'}$, forming thus a mirror pair. In particular, the quasi homogeneous polynomial $W'$ of this mirror dual is obtained by {\it transposing} the exponents matrix (see \cite{K} for the details).  Note that there are a few restrictions on $W$ such as being invertible and non-degenerate. An invertible potential is non-degenerate if and only if it can be written as a sum of (decoupled) invertible potentials called atomic types. There are three such types. Defined in $\C^N$, they can be expressed as: 

\begin{itemize}
\item[] \[W_\text{Fermat} = x^m.\]
\item[] \[W_\text{loop}= x_1^{m_1}x_2+x_2^{m_2}x_3+\dotsb +x_{N-1}^{m_{N-1}}x_N+x_N^{m_N}x_1.\]
\item[] \[W_\text{chain}= x_1^{m_1}x_2+x_2^{m_2}x_3+\dotsb +x_{N-1}^{m_{N-1}}x_N+x_N^{m_N}.\]
\end{itemize}

One obtains thus another quasi homogeneous polynomial $W'$ defining a hypersurface lying in another weighted projective space. The group {\bf G'} satisfies analogous relations as its dual.  For more details, see \cite{BH,K}.  By Theorem 4~\cite{CR}, the Calabi Yau $X_W/${\bf G} and the Calabi--Yau  $X_{W'}/${\bf G'} form a mirror pair.

\subsection{Duality}

\begin{thm}\label{T:LG}
Let $\mathfrak{H}$ be the Hilbert space of square integrable functions  defined by the  Landau--Ginzburg/Koopman--von Neumann theory.
Then, there exists a real Monge--Ampère domain $\mathscr{Y}$ parametrising $\mathfrak{H}$, so that $\pi:\mathfrak{H} \to \mathscr{Y}$ forms a torus fibration. 

\end{thm}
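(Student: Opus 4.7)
The plan is to define $\pi$ as the map $\psi \mapsto |\psi|^2$ sending a normalised wave function to its probability density, show that its image $\mathscr{Y}$ carries a natural real Monge--Ampère structure, and verify that the fibres are orbits of the phase torus acting on $\mathfrak{H}$.

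First I would set up the projection. From Section~\ref{S:psi}, each $\psi \in \mathfrak{H}$ with $\int_{\mathcal{M}} |\psi|^2 \, d\lambda = 1$ yields a probability density $\rho(\psi) = \psi\overline{\psi}$ on the phase space $\mathcal{M}$, playing the role of a Radon--Nikodym derivative. Let $\mathscr{Y}$ be the set of probability densities so obtained, and $\pi(\psi) := |\psi|^2$. The phase action $\psi \mapsto e^{\imath \theta}\psi$, and more generally its component-wise extension on a finite orthonormal decomposition of $\mathfrak{H}$, manifestly preserves $|\psi|^2$. The unitarity of the evolution operator $U_t = \exp(-\imath \widehat{L} t)$ from Eq.~\eqref{E:Ut} guarantees that this phase action is by isometries of $\mathfrak{H}$, and its orbits are exactly the fibres of $\pi$; this identifies $\pi$ as a torus fibration (a circle in the simplest model, a higher-dimensional torus after the discretisation of Section~\ref{S:Conf}).

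Next I would upgrade $\mathscr{Y}$ to a real Monge--Ampère domain via optimal transport, as recalled in Section~\ref{S:8.3}. Fixing a reference density $\rho_0 \in \mathscr{Y}$ supported on a bounded strictly convex compact $\mathcal{K} \subset \R^d$, Brenier's factorisation theorem produces, for each $\rho \in \mathscr{Y}$, a unique convex Lipschitz potential $U_\rho$ such that $\nabla U_\rho$ pushes $\rho_0$ to $\rho$ in the sense of Eq.~\eqref{E:nu}. The Legendre transform $V_\rho = U_\rho^*$ then satisfies $\det D^2 V_\rho = r_\rho$ almost everywhere on $\mathcal{K}^*$ by Eq.~\eqref{E:MA-Brenier}, which is precisely the elliptic Monge--Ampère equation Eq.~\eqref{E:EMA}. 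Parametrising $\mathscr{Y}$ by the family $\{V_\rho\}$ and assembling them into a single convex smooth $\Phi$ on the appropriate finite-dimensional reduction endows $\mathscr{Y}$ with the structure of a geometrisation of an elliptic Monge--Ampère equation in the sense of Section~\ref{S:PDE}; Theorem~\ref{T:main} then upgrades $\mathscr{Y}$ to a potential pre-Frobenius domain, which is more than is strictly needed here.

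The hardest part will be reconciling the infinite-dimensional Hilbert space $\mathfrak{H}$ with the finite-dimensional notion of Monge--Ampère domain from Section~\ref{S:PDE}, which requires $\mathscr{Y} \subset \R^n$ strictly convex and bounded. The natural resolution is the discretisation of Section~\ref{S:Conf}: restricting to configurations of $m$ marked points in $\R^d$ replaces the probability densities by counting Borel measures, the base by a bounded strictly convex region in a finite-dimensional parameter space, and the fibre by the $m$-dimensional phase torus; equivalently, one works with a fixed finite orthonormal subset of $\mathfrak{H}$ and projects. A second technical point is the smooth dependence of the Brenier potential $V_\rho$ on $\rho$, needed to produce a single global convex smooth $\Phi$ on $\mathscr{Y}$; this is handled by the standard Caffarelli regularity for Monge--Ampère under strict convexity of $\mathcal{K}$ and densities bounded away from zero, both of which can be arranged in the LG--KvN setup since $|\psi|^2$ is smooth for $\psi$ in the analytic locus of $\mathfrak{H}$ singled out by the Koopman--von Neumann axioms.
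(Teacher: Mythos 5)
Your proposal follows essentially the same route as the paper's proof: the same projection $\pi(\psi)=|\psi|^2$ onto the space of probability densities, the same identification of the fibres with phase-torus orbits $\psi\sim e^{\imath\theta}\psi$, and the same Brenier--Caffarelli optimal-transport argument producing the elliptic Monge--Amp\`ere equation on the base. The only differences are cosmetic --- you fix a reference density $\rho_0$ and transport to all other densities rather than between arbitrary pairs, and you explicitly flag the finite- versus infinite-dimensional tension (resolving it via the discretisation of the configuration-space section) that the paper's own proof leaves implicit --- so the two arguments stand or fall together.
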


 \begin{proof}

\begin{itemize}
    \item[(1)] We first build the base space of the torus fibration. 
\end{itemize}

To do so we rely on Sec.\ref{S:psi}. Let $\psi(x) \in \mathfrak{H}$. By the normalisation property, one obtains a density of probability $\rho(\psi)=|\psi|^2$ with associated measure $P(C)=\int_{C}\rho d\lambda$, where $C\subset\mathcal{F}$ is a Borel set. By construction, $P$ is absolutely continuous with respect to the Lebesgue measure. 

\medskip 

To each complex valued-function lying in $\mathfrak{H}$ one can associate a density of probability with corresponding measure $\mu\in \mathcal{P}(\mathcal{M})$, where the measure lies in the space of absolutely continuous probability measures on $\mathcal{M}$. 
The space of probability distributions defined above forms our base space $\mathscr{Y}.$

The construction of this also implies the projection morphism $\pi$ from the space $\mathfrak{H}$ formed from the complex valued $L^2$ functions to the space of density of measures, denoted $\mathscr{Y}$. 

\begin{itemize}
    \item[(2)]  We now show that there exists a torus fiber bundle. 
\end{itemize}

Take any point of the base space $\rho(\psi)=(\rho_1,\cdots,\rho_n)(\psi) \in \mathscr{Y}$. By the above construction $\rho_i(\psi)=\psi_i\overline{\psi}_i$, where $\psi_i$ is a given wave function in $\mathfrak{H}$. Then, the fiber $\pi^{-1}(\rho)$ is given by the following equivalence class $(\psi_1,\cdots,\psi_n)\sim (\exp{\imath \theta_1}\, \psi_1,\cdots,\exp{\imath \theta_n}\, \psi_n)$ where $\theta_i\in \R$. This means that if $\psi\in \mathfrak{H}_{\rho}$ is a fiber  then  $y\psi\in \mathfrak{H}_{\rho} $ for $y\in \mathfrak{T}$, where $\mathfrak{T}$ is a torus.
Therefore, this illustrates a torus fiberation.
To be more precise we have the following factorisation:

\[
\begin{tikzcd}
 & \tilde{\mathfrak{H}}  \arrow{dr}{\tilde{f}} \\
\mathfrak{H} \arrow{ur}{\tilde{\pi}} \arrow{rr}{\pi} && \mathscr{Y}
\end{tikzcd}
\]
where $\tilde{\mathfrak{H}}$ is the quotient space of $\mathfrak{H}$ by the torus  $\mathfrak{T}$. The triple $(\mathfrak{H},\tilde{\mathfrak{H}},\tilde{\pi}) $ forms a $\mathfrak{T}$-principal bundle.

\begin{itemize}
    \item[(3)]  We  show that the base space $\mathscr{Y}$ is a Monge--Ampère domain.
\end{itemize}

Following Sec. \ref{S:8.3}, construct a transport map $T$ (irrotational and without crossings). The transport map pushes forward the measure $P$ to a measure $Q$ by using the construction depicted in Eq.~\ref{E:nu} i.e. 
 $$Q=T_{\diamond}P[M]:=P[T^{-1}(M)],$$

 where $M$ is a Borel set of $\mathcal{M}$ and 

the measure $Q$ corresponds to another  vector $\widetilde{\psi}$ in the Hilbert space $\mathfrak{H}$.

Let $P,Q\in  \mathcal{P}(\mathcal{M})$. Assume that $P$ vanishes on Borel subsets of $\mathcal{M}$ having Hausdorff dimension $N-1$. Then, there exists a convex function $U$ on $\mathcal{M}$ such that the gradient of $U$ pushes forward $P$ to $Q$ (\cite{Bre1,Bre2}). Note that $U$ is not necessarily unique. However the map $\nabla U$ is uniquely determined $P-$almost everywhere.

If the unicity of $U$ is required, then it can be obtained under the following extra conditions. One needs that the first moment $\int|y|dQ(y)$ of $Q$ should be finite and that $P$ is almost continuous w.r.t Lebesgue (that is $dP(x)=f(x)dx$ with $f$ vanishing outside a bounded smooth connected domain $D\subset \mathcal{M}$ (and bounded away from 0 and infinity on $D$).

Following Caffarelli's construction~\cite{Caf1,Caf2}, if $dP(x)=f(x)dx$ and $dQ(x)=g(y)dy$ the conditions on $f$ and $g$ ensure the regularity of $U$. One obtains therefore the elliptic Monge--Ampère relation:
\begin{equation}
    det\bigg[ \frac{\partial^2 U}{\partial x_i \partial x_j}\bigg]g(\nabla U(x))=f(x)
\end{equation}

Therefore, we can define paths in the space of states which are parametrised by a Monge--Ampère operator.
Such a transport construction holds for any pair of probability distributions in $\mathscr{Y}$. Therefore, $\mathscr{Y}$ is a Monge--Ampère domain. Note that there are some possible singularities of the domain, provided by the singular fibers.  
    
 This ends the proof of our statement.    
    
\end{proof}
\begin{rem}As a side remark about the proof, note that in the Monge--Ampère construction one can use a displacement interpolation procedure. Given a parameter $t \in [0, 1]$, the interpolant $\mu_t \in P(\mathcal{M})$ between two measures $\mu$ and $\nu$ is defined as
$$\mu_t:=\mu\diamond [(1-t)id + t\nabla U].$$
\end{rem}

We show the relation from the LG model to Frobenius manifolds.  
\begin{cor}\label{T:END}
The base space $\mathscr{Y}$ of the torus fiberation $(\mathfrak{H},\mathscr{Y},\pi)$ is 
a real (potential) pre-Frobenius domain.
\end{cor}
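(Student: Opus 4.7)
The plan is to reduce the corollary directly to two earlier results. By Theorem~\ref{T:LG} the base space $\mathscr{Y}$ of the torus fibration $\pi:\mathfrak{H}\to\mathscr{Y}$ is a real Monge--Ampère domain, i.e.\ a geometrization of the elliptic Monge--Amp\`ere equation in the sense of Sec.~\ref{S:PDE}. Theorem~\ref{T:main} in turn asserts that the quintuple $(\mathscr{D},\Phi,g,A,\con{0})$ attached to any such geometrization is a potential pre-Frobenius domain. Consequently it suffices to identify the data $(\mathscr{Y},\Phi,g,A,\con{0})$ explicitly coming out of the proof of Theorem~\ref{T:LG} and check that it is precisely an instance of a GEMA.

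First I would take the potential to be the convex Brenier/Caffarelli function $U$ produced in Theorem~\ref{T:LG}, whose gradient pushes forward the probability density $\rho(\psi)=|\psi|^{2}$ coming from $\mathfrak{H}$ to any other density in $\mathscr{Y}$ and which satisfies the elliptic Monge--Amp\`ere equation $\det(\mathrm{Hess}\,U)\cdot g(\nabla U)=f$. Setting $\Phi:=U$, Lemma~\ref{L:Aff} and Corollary~\ref{C:con} then supply an affine flat, torsion-free connection $\con{0}$ on $\mathscr{Y}$, while Lemma~\ref{L:rank2,3} supplies the Hessian metric $g_{ij}=\partial_{i}\partial_{j}\Phi$ and the totally symmetric rank-three tensor $A_{ijk}=\partial_{i}\partial_{j}\partial_{k}\Phi$ in the flat coordinates coming from the affine structure.

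With this data identified, the verification of axioms (i)--(v) of a potential pre-Frobenius manifold is exactly the content of Theorem~\ref{T:main}: affine flatness from (i), the Hessian metric from (ii), the rank-three Codazzi tensor with potential from (iii), the composition law $\partial_{a}\circ\partial_{b}=\sum_{c}A_{ab}^{c}\partial_{c}$ with $A_{ab}^{c}=\sum_{e}A_{abe}g^{ec}$ from (iv), and finally the Frobenius-type compatibility $A(X,Y,Z)=g(X\circ Y,Z)=g(X,Y\circ Z)$ from (v). Combining these yields the pre-Frobenius structure on $\mathscr{Y}$, which is exactly the claim of the corollary.

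The one point I would expect to be the main obstacle, or at least the only non-mechanical step, is regularity: Theorem~\ref{T:main} is stated for a smooth strictly convex potential on a strictly convex bounded domain, whereas the Brenier potential $U$ produced in Theorem~\ref{T:LG} is a priori only convex and Lipschitz. As noted at the end of the proof of Theorem~\ref{T:LG}, Caffarelli's regularity theory under the stated assumptions on the densities $f$ and $g$ makes $U$ smooth and strictly convex on the smooth locus of $\mathscr{Y}$, so that the derivatives $\partial_{i}\partial_{j}\Phi$ and $\partial_{i}\partial_{j}\partial_{k}\Phi$ exist and Theorem~\ref{T:main} applies. The statement of the corollary should therefore be read as holding on the open smooth locus of $\mathscr{Y}$, away from the possible singularities coming from the singular fibres of $\pi$, and once this is acknowledged the rest of the proof is a direct application of Theorem~\ref{T:main}.
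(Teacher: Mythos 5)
Your proposal is correct and follows the paper's own proof exactly: apply Theorem~\ref{T:LG} to identify $\mathscr{Y}$ as a real Monge--Ampère domain, then apply Theorem~\ref{T:main} to conclude it is a potential pre-Frobenius domain. Your additional care in identifying the Brenier/Caffarelli potential $U$ as the function $\Phi$ and flagging the regularity and smooth-locus caveat is more explicit than the paper's two-line argument, but it is the same route.
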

\begin{proof}
 The first step is to apply Theorem \ref{T:LG}. Indeed, this shows that the LG theory is parametrized by a Monge--Ampère domain. The second step is to apply Theorem \ref{T:main}, where it is shown that a Monge--Ampère domain is a potential pre-Frobenius domain. Therefore, the Hilbert space $\mathfrak{H}$ of the LG  theory is parametrized by a real potential pre-Frobenius domain.
\end{proof}    

We can now show that mirror dual Calabi--Yau manifolds $X$ and $X^\vee$, obtained via the construction of Berglund--H\"ubsch--Kotwitz are parametrised by the same Monge--Ampère manifold, which is a space of density of probabilities. 

\begin{thm}\label{T:FINAL}
There exists a real Monge--Ampère domain $\mathscr{Y}$ parametrizing a pair of mirror dual Calabi--Yau manifolds $X/${\bf G} and $X'/${\bf G'}, obtained from the Berglund--H\"ubsch--Kotwitz approach. This construction forms Lagrangian torus fibrations over $\mathscr{Y}$. 
\end{thm}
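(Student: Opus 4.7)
The plan is to combine Theorem~\ref{T:LG} with the Berglund--H\"ubsch--Krawitz construction as recalled in Section~\ref{S:Chiodo}, and then to restrict the torus fibration obtained over $\mathscr{Y}$ to each member of the mirror pair. First I would invoke Theorem~\ref{T:LG} to obtain the real Monge--Amp\`ere domain $\mathscr{Y}$, the Hilbert space $\mathfrak{H}$ arising from the LG/KvN construction, and the projection $\pi:\mathfrak{H}\to\mathscr{Y}$, together with the equivalence $\psi\sim e^{\imath\theta}\psi$ whose induced $\C^\times$--action on $\mathfrak{H}\setminus\{0\}$ yields the ambient weighted projective space $\mathbb{P}_{(w_1,\dots,w_n)}$ in which the BHK hypersurface will sit. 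At this stage the torus fiber of $\pi$ is identified with the orbit of the (compact) torus subgroup acting on each density level set, justifying the weighted-projective ambient.

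Second, I would fix a non-degenerate invertible quasi-homogeneous potential $W$ of Fermat, loop or chain atomic type with Calabi--Yau condition $\sum_i l_i=1$, and form $X_W/\mathbf{G}$ inside this weighted projective space; the BHK transposition together with Chiodo--Ruan's Theorem~4 in~\cite{CR} then produces the mirror dual $X_{W'}/\mathbf{G'}$ in the transposed weighted projective space, both being Calabi--Yau orbifolds away from the critical divisor. This gives both vertices of the diagram
\[
\begin{tikzcd}
&\mathfrak{H} \arrow[dr]\arrow[dl] &\\
X/\mathbf{G} \arrow[dr]\arrow[rr,"Mirror"] & & X^\vee/\mathbf{G'} \arrow[ll] \arrow[dl] \\
&\mathscr{Y}  &
\end{tikzcd}
\]
which is induced from that of the introduction.

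Third, I would restrict $\pi$ to $X_W/\mathbf{G}$ (and analogously to the mirror) and verify that the restriction is still a torus fibration over $\mathscr{Y}$. The key observation is that the torus action preserves the zero locus of the quasi-homogeneous $W$ by homogeneity, so the $\mathfrak{T}$-orbits inside $\mathfrak{H}$ either lie entirely in $\{W=0\}$ or are disjoint from it; consequently the fiber of $\pi|_{X_W/\mathbf{G}}$ over a generic point of $\mathscr{Y}$ is a real torus of half the real dimension of $X_W/\mathbf{G}$, matching the SYZ dimensional count $2\dim_\R(\mathscr{Y})=\dim_\R(X)$ coming from the Calabi--Yau condition. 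Singular fibers appear exactly over the discriminant where the $\mathfrak{T}$-orbit collapses or where the BHK quotient has orbifold points; these are absorbed into the singular locus already allowed in the Monge--Amp\`ere construction of Theorem~\ref{T:LG}.

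Finally, I would verify the Lagrangian property. The Hilbert-space level carries a natural K\"ahler form inherited from the Hermitian inner product on $\mathfrak{H}$, and the compact torus $\mathfrak{T}$ acts by K\"ahler isometries whose moment map is essentially the density function $\rho(\psi)=|\psi|^2$ providing the Monge--Amp\`ere potential $U$ via Brenier's factorisation used in Theorem~\ref{T:LG}. By the Marsden--Weinstein picture, level sets of the moment map modulo the torus are symplectic reductions, and the $\mathfrak{T}$-orbits inside them are isotropic of maximal dimension, i.e.\ Lagrangian; the Calabi--Yau hypersurface inherits this structure because the BHK potential $W$ is torus-equivariant. The main obstacle I anticipate is this last step: making fully precise the identification of the density of probability with a moment map on the ambient weighted projective space, checking equivariance against the BHK group $\mathbf{G}$, and handling the discriminant locus where the Lagrangian fibration degenerates. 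Once this is done, both $X/\mathbf{G}$ and $X^\vee/\mathbf{G'}$ sit over the same Monge--Amp\`ere base $\mathscr{Y}$ as Lagrangian torus fibrations, which is the assertion of the theorem.
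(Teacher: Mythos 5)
Your proposal follows essentially the same route as the paper's proof: invoke Theorem~\ref{T:LG} for the Monge--Amp\`ere base $\mathscr{Y}$ and the torus fibration $\pi:\mathfrak{H}\to\mathscr{Y}$, build the mirror pair via Berglund--H\"ubsch--Krawitz and Chiodo--Ruan's Theorem~4, and exhibit both hypersurface quotients as torus fibrations over $\mathscr{Y}$ by sending $(\psi_0,\dots,\psi_n)$ to $(|\psi_0|^2,\dots,|\psi_n|^2)$. Two differences are worth recording. First, the paper devotes the entire first half of its proof to arguing that a \emph{single} domain $\mathscr{Y}$ serves both members of the pair even though $X_W$ and $X_{W'}$ sit in differently weighted projective spaces: it observes that $\mathscr{Y}$, being the space of probability densities on the phase space $\mathcal{M}$, is independent of the weights, and then runs a short contradiction argument (two distinct bases would split $\mathfrak{H}$ into disjoint subspaces of wave functions with incompatible properties). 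You assert at the end that both sides lie over the same base but justify the mirror side only with ``analogously''; since the uniqueness of $\mathscr{Y}$ across the two weight systems is the actual content of the first claim of the theorem, you should add the weight-independence observation to close that step. Second, you attempt to verify the Lagrangian property via a moment-map/symplectic-reduction argument on $\mathfrak{H}$, correctly flagging the identification of $\rho(\psi)=|\psi|^2$ with a moment map as the delicate point; the paper's proof contains no verification of the Lagrangian condition at all, so on this point your sketch goes beyond the published argument (and your remark that quasi-homogeneity of $W$ makes $\{W=0\}$ invariant under the torus action, so that $\pi$ genuinely restricts to the hypersurface, is likewise more explicit than anything in the paper). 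Neither difference changes the overall architecture, and your sketch is at least as complete as the paper's own proof.
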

\begin{proof}
\begin{enumerate}
\item We show that the mirror pairs of Calabi--Yau manifolds $X/${\bf G} and $X'/${\bf G'} generated by the zero locus of $W$ and $W'$ are parametrised by a unique Monge--Ampère domain. We refer to \cite{BH,K} and Theorem  \cite{CR} for the method of construction of mirror Calabi--Yau pairs.

Suppose by contradiction that there exist two distinct Monge--Ampère domains $\mathscr{Y}$ and $\mathscr{Y'}$ parametrising respectively the weighted projective spaces  containing those two hypersurfaces. 

First, from the construction of the proof of Theorem \ref{T:LG}, one can deduce that $\mathscr{Y}$ is independent from the of weights of the projective spaces containing the hypersurfaces.

Secondly, the assumption that there exist two distinct spaces $\mathscr{Y}$ and $\mathscr{Y'}$ implies that there would exist two different spaces of distributions of probabilities defined on the phase space $\mathcal{M}$. Given that those densities of probabilities are generated by the wave functions of $\mathfrak{H}$ it implies that the Hilbert space $\mathfrak{H}$ of $L^2$ functions would be formed from two disjoint Hilbert subspaces of wave functions having very different properties. This is absurd. So, $\mathscr{Y}$ and $\mathscr{Y'}$ coincide. 

~

\item We prove now that this construction generates a torus fibration. 
\end{enumerate}
We consider a bundle constructed from the triple $(\mathfrak{H},\mathscr{Y},\pi)$, where  $\mathfrak{H}$ is the  Hilbert space  of $L^2$ functions; $\mathscr{Y}$ is the Monge--Ampère domain corresponding to space of density of probabilities over the phase space $\mathcal{M}$, such as defined by Koopman--von Neumann. This generates a principal fiber bundle given by $\C^{n+1} \to  \C^{n+1}/\mathfrak{T}$ where $\mathfrak{T}$ is a torus. 

Take a point on $X_W$ of coordinates $(\psi_0(x),\cdots,\psi_n(x))\in X_W\subset \C^{n+1},$ where $x\in \mathcal{M}$. This point of the total space is mapped to a point in the base space i.e. lying in the space of density of probabilities, with coordinates $\rho=(\rho_0,\cdots,\rho_n)(\psi(x))\in \R^{n+1} $, where $\rho_i=|\psi_i(x)|^2$. The fiber $L_0=\pi^{-1}(\rho(\psi))$ generates a torus via the fact that  $\pi^{-1}(\rho(\psi))\simeq (\C^\times)^{n+1}\times \{(\psi_0(x),\cdots,\psi_n(x))\}$  (recall that $\psi_i$ lies in the same equivalence class as $\exp{\imath \theta}\psi_i$).  The same type of construction holds for the mirror dual, given by $\{W'=0\}$. We refer to \cite{K} for a detailed exposition. 

We have thus the following diagram:

\[\begin{tikzcd}[column sep=tiny]
& X_W/{\bf G}   \ar[dl, "\pi_1"] 
&
&[1.5em] \\
  \mathscr{Y} 
    &
      & \C^{n+1}\setminus 0 /\C^\times  \ar[dl]  \ar[ul]
& \ar[l, dashed, ]\mathfrak{H} \ar[dll, "j_2"', bend left=20] \ar[ull, "j_1"', bend right=20]\\
& X_{W'}/{\bf G'} \ar[ul, "\pi_2"] 
&
&
\end{tikzcd}\]

In the Monge--Ampère domain $\mathscr{Y}$ we transport
the density of probabilities corresponding to those wave functions, whose complex values satisfy $\{W=0\}$ in a small neighborhood of $x$ in $\mathcal{M}$ to wave functions such that the complex values $\{W'=0\}$. This is resumed in the following diagram:
\[
\begin{tikzcd}
&\mathfrak{H}\arrow[d,dashed] \ar[ddddl,, bend right=60] \ar[ddddr, bend left=60]& \\
&\C^{n+1}\setminus 0 /\C^\times&\\
\arrow[hook]{ur}{} X_W\subset \C^n \arrow{rr}{} \arrow[swap]{d}{fiber\, \times(\C^\times)^n }& & \arrow{ll}{} X_{W'} \arrow[hook]{ul}{} \subset\C^n\arrow{d}{fiber\, \times(\C^\times)^n }{} \\
X_W/{\bf G } \arrow[rr,dashed,"Mirror\, dual"] \arrow{d}{\pi_1}  && X_{W'}/{\bf G '} \arrow{d}{\pi_2} \\
\rho\in \mathscr{Y} \arrow{rr}{Monge\, transport\, T}  & &\arrow{ll}\tilde{\rho}\in \mathscr{Y} 
\end{tikzcd}
\]

\end{proof}

\begin{rem}
Let $\rho$ be a point on the Monge--Ampère domain. Given a smooth fiber $\pi^{-1}(\rho)=L_0$ and a collection of loops $(\gamma_1,\cdots \gamma_n)$ forming a basis of the first cohomology group $H_1(L_0,\mathbb{Z})$ one can determine an affine (Monge-Ampère) chart, centered at $\rho$. Indeed, given a point in the base space $\rho\in \mathscr{Y}$ and a point $\tilde{\rho}$ in a small neighborhhod of $\rho$  producing a smooth fiber $L_1=\pi^{-1}(\tilde{\rho})$, one can isotope $L_0$ to $L_1$ among fibers of $\pi$. This isotopy is parametrised by a Monge--Ampère operator.  
Each loop $\gamma_i$ traces a cylinder $\Gamma_i$ with boundary in $L_0\cup L_1$. The affine coordinates associated to $\rho$ are the symplectic areas $(\int_{\Gamma_1}\omega,\cdots, \int_{\Gamma_n} \omega)$ where $\omega$ is the Kähler form on the considered (open) Calabi--Yau manifold lying in a weighted projective space.
 \end{rem}

We highlight that other objects discussed in \cite[Sec. 2]{AAK} such as the {\it phase function, "instanton corrections'',  tautologically unobstructed Lagrangians} and {\it walls} in the Monge--Ampere domain can be easily recovered, using our approach. In particular, the walls in the Monge--Ampere domain  will be naturally expressed in terms of probabilities. It will be the subject of a next article.

\section{LG Toy model}\label{S:ToyModel1}
 \subsection{}
 The state space of an $n$-dimensional quantum system is the set of all $n \times n$ positive semidefinite complex matrices of trace 1, known as {\it density matrices}. We consider as a toy model the space of such matrices. For simplicity, we do not normalise the matrices (i.e. the trace is not required necessarily to be equal to 1) and omit the boundary of the cone given by $x^TAx=0$, where $A$ is a symmetric square matrix.  
 
Those matrices play an important role in the LG theory (see Sec.\ref{S:LG1} and Sec.\ref{S:LG}). It is possible to identify the rank one matrices, corresponding to pure states, with nonzero vectors in a complex Hilbert space of dimension $n$. Notice that the same state is described by a vector $\psi$ and $\kappa \psi$, where $\kappa \neq 0$, which leads to projective geometry. 
In this paper, we do not consider the infinite-dimensional case, where density matrices are replaced by density operators and the space of pure states is the complex projective space over the infinite-dimensional Hilbert space.

The open cone of positive definite complex matrices forms a strictly convex self dual homogeneous cone. Given a Hilbert space $\mathfrak{H}$, one defines a self dual cone $\mathscr{P}$ in $\mathfrak{H}$ by the set  $\mathscr{P}=\{ \zeta\in \mathfrak{H} \,|\, \langle \zeta,\eta \rangle \geq 0, \forall \eta \in \mathscr{P} \}$. We know from the classical representation theorem of Jordan--von Neumann--Wigner that there are five classes of irreducible formally real Jordan algebras. The transitively homogeneous self dual cone associated to a given class of Jordan algebras is then the set of positive elements of the Jordan algebra, with the Hilbert structure given by the natural trace~\cite{Kos62,Maa,Vin,PS,CoMa}. In fact, by \cite{Connes}, the category of von Neumann algebras is equivalent to the category of self dual facially homogeneous complex cones. We refer to Appendix \ref{A:1} for details concerning those cones.

  \subsubsection{}
Let $\K$ be a real division algebra (i.e.  $\K$  is  $\R, \C, \hH$ or in $\oO$). The objects considered here are irreducible cones, represented as the space of positive definite symmetric $n\times n$ matrices with coefficients in $\K$. Note that case of octonions differs from the others in the sense that we only have the cone $\mathscr{P}_3(\oO)$ formed from $3\times 3$ matrices with coefficients in $\oO$. Whenever there is no source of confusion, we write $\mathscr{P}$ rather than  $\mathscr{P}_n(\K)$ to designate any cone depicted above. 

\subsection{Main statement for the Toy Model}

The aim of this section (and the following) is to prove the following fact. 
\begin{thm-non}
Let $\mathscr{P}$ be an irreducible strictly convex cone, defined as previously. Then:
\begin{enumerate}[label=\arabic*)]
\item  $\mathscr{P}$ is a non-compact symmetric space.  
\item $\mathscr{P}$ carries a pre-Frobenius structure. 
\item There exists a non-empty Frobenius locus $\F$ in $\mathscr{P}$, given by $\F=\exp{\frak{a}}$, where $\frak{a}\subset \ft$ is a Lie triple system i.e. satisfies $[[\frak{a},\frak{a}],\frak{a}]\subseteq \frak{a}$.
\end{enumerate}
\end{thm-non}

\begin{itemize}
\item[\ding{169}] Statement 1) follows from a well known classification of symmetric spaces given by Nomizu~\cite{No54}. We will recall related facts in Sec. \ref{S:3.1}. 

\item[\ding{169}]  Statement 2) has two different proofs. 
\begin{itemize}
\item The first proof relies on the method of elliptic Monge--Ampère equations, developed in Sec.\ref{S:GEMA}. It is given by  Proposition~\ref{P:pdsmPreF}.
\item The second proof is the subject of Sec. \ref{S:Pre-Frob}), namely Proposition \ref{P:pre-Fro}. It uses  methods from \cite[Chap. IV]{Hel}. The latter approach allows to obtain Statement 3).
\end{itemize}
\item[\ding{169}]  Statement 3) is obtained from two main results:
\begin{itemize}
\item There exists a totally geodesic submanifolds in $\mathscr{P}_n$ (see Sec. \ref{S:tgLietriple}, Proposition \ref{P:exist} and Proposition \ref{P:Matrix}) 
\item There exists a totally geodesic submanifold in $\mathscr{P}_n$ carrying a Frobenius structure (see Sec. \ref{S:Frob}, Theorem~\ref{T:Frobenius}).
\end{itemize}
\end{itemize}

\subsubsection{} We proceed with a first proof of Statement 2).
\begin{prop}\label{P:pdsmPreF}
Let $\mathscr{P}$ be an open cone of positive definite symmetric  square  matrices, with coefficients in a real division algebra $\K$. Then, $\mathscr{P}$  carries a pre-Frobenius structure. \end{prop}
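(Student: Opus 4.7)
The approach is to invoke Theorem~\ref{T:main} directly, so the real work is to exhibit on $\mathscr{P} = \mathscr{P}_n(\K)$ the data $(\mathscr{P},\Phi,g,A,\con{0})$ of a Monge--Ampère domain. The cone $\mathscr{P}$ sits as an open convex domain in the real vector space $V$ of symmetric (resp.\ Hermitian) $n\times n$ matrices over $\K$ (or the exceptional Jordan algebra $J_3(\oO)$ when $\K = \oO$), and the global linear coordinates on $V$ give $\mathscr{P}$ a flat torsionless affine connection $\con{0}$, in the sense of Sec.~\ref{S:1.1}. This takes care of item (i) of the pre-Frobenius data before we even invoke the Monge--Ampère picture.

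The key ingredient is the potential. The plan is to take
\[
\Phi(A) \;=\; -\log \det(A),
\]
where $\det$ denotes the ordinary determinant for $\K \in \{\R,\C,\hH\}$ and the Jordan algebra determinant (the norm form) for $J_3(\oO)$. Classical facts about the Koszul--Vinberg characteristic of a symmetric cone tell us that $\Phi$ is smooth and strictly convex on $\mathscr{P}$. A direct calculation then yields
\[
\mathrm{Hess}_A(\Phi)(X,Y) \;=\; \mathrm{tr}\!\left(A^{-1} X A^{-1} Y\right),
\]
which is the standard affine-invariant Riemannian metric on $\mathscr{P}$; it is positive definite everywhere, so the associated $g_{ij} = \partial_i\partial_j \Phi$ of Lemma~\ref{L:rank2,3} is a nondegenerate Hessian metric, and its third derivatives give the symmetric rank three tensor $A_{ijk} = \partial_i\partial_j\partial_k \Phi$. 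The Monge--Ampère equation itself is then verified by computing the determinant of the Hessian in linear coordinates, which comes out to a positive multiple of $\det(A)^{-(n+1)}$ (with the usual modifications in the Hermitian/quaternionic/octonionic cases), yielding a smooth positive function $f$ on $\mathscr{P}$ with $\det \mathrm{Hess}(\Phi) = f$.

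With the quintuple $(\mathscr{P},\Phi,g,A,\con{0})$ in place and all the hypotheses of the Monge--Ampère setup satisfied locally around every point of $\mathscr{P}$, Theorem~\ref{T:main} applies verbatim and produces the pre-Frobenius structure: the bilinear form $g$ from (ii), the symmetric $A$ from (iii), and the induced multiplication $\circ$ via partial dualization $A\cdot g^{-1}$ giving (iv) and (v). One can even write down the identity vector field, which will turn out in the toy model to be closely related to the identity matrix $I_n \in \mathscr{P}$.

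The only subtlety I expect to have to address carefully is that the GEMA definition in Sec.~\ref{S:PDE} is stated for a strictly convex \emph{bounded} domain with prescribed boundary values, whereas $\mathscr{P}$ is an unbounded self-dual cone whose boundary (the rank-deficient locus) contains line segments. This is not a genuine obstruction, because Theorem~\ref{T:main} only uses the existence of the convex smooth potential with nondegenerate Hessian and the induced flat affine structure; the boundedness and strict convexity hypotheses enter in Sec.~\ref{S:PDE} to guarantee existence and uniqueness of $\Phi$ from Dirichlet data, not to produce the pre-Frobenius structure. So the argument proceeds by noting that our $\Phi = -\log\det$ is an explicit global potential on the manifold $\mathscr{P}$, making $\mathscr{P}$ a Monge--Ampère manifold in the sense of Lemma~\ref{L:rank2,3} and the remark following it, after which Theorem~\ref{T:main} is applied directly.
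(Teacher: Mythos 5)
Your proposal is correct and reaches the conclusion by the same overall strategy as the paper --- exhibit $\mathscr{P}$ as a geometrized Monge--Amp\`ere domain and then feed it into Theorem~\ref{T:main} --- but the step that actually produces the Monge--Amp\`ere structure is done quite differently. The paper's proof is essentially non-constructive there: it identifies $\mathscr{P}$ with a cone of positive definite quadratic forms, invokes G\aa{}rding hyperbolicity of the determinant and the Dirichlet-problem literature \cite{RT,BT,Al,AV} to assert that the convex sets $\{\det \geq c\}$ carry a unique convex solution of $\det \mathrm{Hess}(\Phi)=c$, and concludes that $\mathscr{P}$ is a GEMA. You instead write down the explicit global potential $\Phi=-\log\det$, compute $\mathrm{Hess}_A\Phi(X,Y)=\mathrm{tr}(A^{-1}XA^{-1}Y)$ and $\det\mathrm{Hess}(\Phi)=c\,\det(A)^{-(n+1)}$ in linear coordinates, and read off the Monge--Amp\`ere equation directly. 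This buys two things: first, the metric, the rank three tensor and the multiplication become concrete, and your potential is (up to an additive constant) the logarithm of the Koszul--Vinberg characteristic function that the paper uses in its \emph{second} proof, Proposition~\ref{P:pre-Fro}, so your argument effectively bridges the two proofs the paper offers; second, you explicitly confront the mismatch between the GEMA definition (bounded strictly convex domain with prescribed boundary values) and the unbounded cone, and resolve it by noting that Theorem~\ref{T:main} only consumes the quintuple $(\mathscr{D},\Phi,g,A,\con{0})$ and not the boundedness hypotheses --- a point the paper leaves implicit, since its sets $\{\det\geq c\}$ are likewise unbounded. The one place where you defer work that the paper outsources to a citation is $\K=\oO$: there the ``determinant'' must be the cubic norm form of the Albert algebra and the Hessian computation has to be redone in that setting, which you flag but do not carry out (the paper covers this case by citing \cite{AV}).
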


\begin{proof}
First note that those open cones can be identified with the space of positive definite quadratic forms in $n$ variables. Applying this, we can rely on  Gårding--hyperbolic polynomials defined on a vector space of quadratic forms on $\R^n$ (\cite{Ga}). 
For each homogeneous polynomial $P$ on this vector space one considers the associated (non- linear) partial differential operator defined by $P (\mathrm{Hess} (\Phi)).$ If $P$ is the determinant, the associated operator is the real Monge--Ampère operator.

By taking $P$ to be the determinant, one can enlarge the construction above to the classes corresponding to the division algebras, i.e. $\R,\C,\hH$ and $\oO$.
The construction for  real numbers is given in \cite{RT}. The complex case is investigated in \cite{BT}. The quaternion and  octonion cases have been considered in  \cite{Al,AV}. 

In fact, symmetric matrices with determinant greater or equal to $c>0$ form a convex set. To every such convex set corresponds an elliptic Monge--Amp\`ere operator, given by $\det \mathrm{Hess}(\Phi)=c>0$, where $\Phi$ is unique. One can see the details of the construction in \cite{Ga,BT}. In \cite{RT,Al,AV} a generalisation to other fields such as $\C, \hH$ and $\oO$ is given.

So, the domain $\mathscr{P}_n$ comes equipped with an elliptic Monge--Amp\`ere operator and is a GEMA. Therefore, by Theorem~\ref{T:preF} it has a pre-Frobenius structure. 
\end{proof}

\begin{rem} A method to check whether  $\mathscr{P}_n$  is a Frobenius manifold or contains one is to compute the curvature tensors.  From those calculations, it follows that the sectional curvature of  $\mathscr{P}_n$ is non-positive. Therefore  $\mathscr{P}_n$  is a pre-Frobenius domain. Since there exists a non-empty locus where the sectional curvature vanishes this implies the existence of a Frobenius locus in $\mathscr{P}_n$. We have thus an explicit Monge--Ampère domain containing a Frobenius manifold. 
\end{rem}
\subsection{An application to Calabi--Yau manifolds}

Consider Siegel's upper half plane $\mathscr{H}_n$. Given a positive integer $n$, this space is defined as 
$$\mathscr{H}_n:=\{Y\in Mat_{n\times n}(\C)\, |\, Y=Y^t,\, Im(Y)>0\, \},$$
where $Mat_{n\times n}(\C)$ denotes the set of all matrices of size $n\times n$ with entries in $\C$; the notation $Im(Y)>0$ means positive definite. 

Viewed as a symmetric space it is identified with the quotient of Lie groups $Sp_{2n}(\R)/U_n,$ where $Sp_{2n}(\R)$ is the symplectic group. The symmetric space $\mathscr{H}_n$ contains a symmetric subspace 
$$\mathscr{P}_n(\R)=\{Y \in Mat_{n\times n}(\R)\, | \, Y=Y^t > 0\}$$ the (open cone) of symmetric positive definite matrices. 

The space $\mathscr{H}_n$ parametrizes an important arithmetic variety: the moduli of principally polarized abelian varieties of dimension $n$, denoted $\mathscr{T}_n^\C$. On the other hand, $\mathscr{P}_n(\R)$ parametrizes principally polarized real tori of dimension $n$, by \cite{Ya15}.

Therefore, the Monge--Ampère domain $\mathscr{P}_n(\R)$ parametrizes the space $\mathscr{T}_n^\R$ of principally polarized real tori of dimension $n$ Indeed, to each equivalence class $[Y]\in GL(n, \mathbb{Z})\curvearrowright\mathscr{P}_n$, where $Y\in \mathscr{P}_n$ (and where the notation $\curvearrowright$ means ``acting on'') one associates a principally polarized real torus $T_Y=\R^n/\Lambda_Y$ , where $\Lambda_Y=Y \mathbb{Z}^n$ is a lattice in $\R^n$.

Naturally, we can summarise this in the following diagram:
\[\begin{tikzpicture}[every node/.style={midway}]
  \matrix[column sep={4em,between origins}, row sep={2em}] at (0,0) {
    \node(R) {$\mathscr{T}_n^\C$}  ; & \node(S) {$\mathscr{T}_n^\R$}; \\
    \node(R/I) {$\mathscr{H}_n$}; & \node (T) {$\mathscr{P}_n(\R)$};\\
  };
  \draw[<-] (R/I) -- (R) node[anchor=east]  {};
  \draw[->] (R) -- (S) node[anchor=south] {};
  \draw[->] (S) -- (T) node[anchor=west] {};
  \draw[->] (R/I) -- (T) node[anchor=north] {};
\end{tikzpicture}\]
where:
\begin{itemize}
    \item $\mathscr{H}_n$ is Siegel's upper half plane.
        \item $\mathscr{P}_n(\R)$ forms the imaginary part of $\mathscr{H}_n$. It is identified with real cone of symmetric positive definite matrices of size $n\times n$.
    \item $\mathscr{T}_n^\R$ denotes the space of principally polarized real tori of dimension $n$ (polarized real abelian varieties).
    \item $\mathscr{T}_n^\C$ denotes the space of principally polarized complex tori of dimension $n$ (polarized  abelian varieties).

\end{itemize}
This illustrates an example of \cite[Sec. 8]{KoS01}, in which complex tori are an example of Calabi-Yau manifolds. In particular, this provides a torus fibration. 

\medskip 

In the next sections we provide the building blocks for giving a geometric proof of the existence of a isometrically immersed Frobenius manifold in $\mathscr{P}$. This Frobenius manifold forms an {\it algebraic} torus. 


\subsection{Geometry of $\mathscr{P}$}
In Appendix \ref{A:1} we recall all the necessary information relating symmetric cones to quotients of Lie groups.

\subsection{Totally geodesic submanifolds of $\mathscr{P}$}
\subsubsection{Lie triple systems}\label{S:tgLietriple} We prove in this subsection that there exists in $\mathscr{P}_n(\K)$ a flat totally geodesic submanifold. 

\begin{thm}\label{T:Hel}
\begin{enumerate}~

    \item The curvature tensor $R$ evaluated at $T_p\mathscr{P}$ is given by 
\[R(X,Y)Z=-[[X,Y],Z],\, \text{for}\quad  X,Y,Z\, \in\,  T_p\mathscr{P}. \]

\item Consider the submanifold $\F\subset \mathscr{P}$. Let $p$ be a point in $\mathscr{P}$. Identifying the tangent space $T_p \mathscr{P}$ with $\ft$, let $\frak{a}\subseteq \ft$ be a Lie triple system contained in $\ft$. 
Put $\F:=\exp{\frak{a}}$. Then, $\F$ has a natural differentiable structure in which it is a  totally geodesic submanifold of $\mathscr{P}$ satisfying $T_p\mathscr{F}=\frak{a}$. On the other hand, if $\F$ is a totally geodesic submanifold of $\mathscr{P}$ then the subspace $\frak{a}=T_p\mathscr{F}$ of $\ft$ is a Lie triple system.  

\end{enumerate}
\end{thm}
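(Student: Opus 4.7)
The plan is to view $\mathscr{P} = \mathscr{P}_n(\K)$ as the non-compact symmetric space $G/K$ (for instance $GL_n(\R)/O_n(\R)$ in the real case, and its analogues over $\C$, $\hH$, $\oO$), and then deduce both statements from the general theory of symmetric spaces via the Cartan decomposition $\fg = \fk \oplus \ft$, where $\ft$ is identified with $T_p\mathscr{P}$ at the basepoint $p$. The involution $\sigma$ of $G$ fixing $K$ induces the decomposition $[\fk,\fk]\subseteq \fk$, $[\fk,\ft]\subseteq \ft$, $[\ft,\ft]\subseteq \fk$, and the symmetric pair structure provides the canonical torsion-free affine connection on $\mathscr{P}$ whose parallel transport is realised by the action of $G$.

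For part (1), I would first work at the basepoint $p$. Using the symmetric pair $(G,K)$ and Nomizu's construction of the canonical connection on a reductive homogeneous space, the connection satisfies $\con{}_X Y = \tfrac12 [X,Y]_\ft$ for left-invariant vector fields, but because $[\ft,\ft]\subseteq \fk$ this component vanishes on $\ft$, yielding that $\con{} = 0$ on $\ft$-valued fields at $p$. The curvature at $p$ is then computed directly from the formula $R(X,Y)Z = \con{}_X\con{}_Y Z - \con{}_Y\con{}_X Z - \con{}_{[X,Y]}Z$, and using that $[X,Y]\in \fk$ one finds that $\con{}_{[X,Y]}Z = [[X,Y],Z]$ (the action of $\fk$ on $\ft$ is by Lie bracket). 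The first two terms vanish at $p$, so $R(X,Y)Z = -[[X,Y],Z]$ as claimed. The minus sign is the standard sign for non-compact type; translating by $G$ extends the formula from $p$ to all of $\mathscr{P}$.

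For part (2), the reverse direction is quickest: if $\F\subseteq \mathscr{P}$ is totally geodesic and $\frak{a} = T_p\F$, then the Riemann tensor of $\mathscr{P}$ restricted to tangent vectors of $\F$ agrees with the Riemann tensor of $\F$, so $R(X,Y)Z \in \frak{a}$ for $X,Y,Z \in \frak{a}$. By part (1) this reads $[[X,Y],Z]\in \frak{a}$, i.e. $\frak{a}$ is a Lie triple system. For the forward direction, given a Lie triple system $\frak{a}\subseteq \ft$, form $\fg(\frak{a}) := [\frak{a},\frak{a}] \oplus \frak{a}\subseteq \fg$; the Lie triple condition guarantees that this is a Lie subalgebra, and the involution $\sigma$ restricts to it. Let $G(\frak{a})$ be the connected Lie subgroup of $G$ with Lie algebra $\fg(\frak{a})$; then $G(\frak{a})\cdot p$ is an immersed submanifold of $\mathscr{P}$ whose tangent space at $p$ is $\frak{a}$, and as a set it coincides with $\exp(\frak{a})$ because $\frak{a}$ parametrizes the coset $G(\frak{a})/(G(\frak{a})\cap K)$ via the exponential of a symmetric pair. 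The geodesics of $\mathscr{P}$ through $p$ are of the form $t\mapsto \exp(tX)\cdot p$ for $X\in\ft$, so geodesics starting in $\F$ with initial velocity in $\frak{a}$ remain in $\exp(\frak{a})$; translating by $G(\frak{a})$ shows this at every point of $\F$, which is the defining property of a totally geodesic submanifold.

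The main obstacle is the forward direction of (2): verifying smoothly that $\exp(\frak{a})$ is a submanifold and not merely a subset, and matching it with the orbit $G(\frak{a})\cdot p$. This requires care with the possibly non-injective exponential and with the fact that $\F$ inherits the symmetric-space structure of the sub-pair $(G(\frak{a}), G(\frak{a})\cap K)$; once one notes that $(G(\frak{a}), G(\frak{a})\cap K)$ is itself a Riemannian symmetric pair of non-compact type, local injectivity of $\exp|_\frak{a}$ follows from the standard Cartan--Hadamard-type argument, and the totally geodesic property is then immediate from the second fundamental form vanishing at $p$ together with $G(\frak{a})$-homogeneity.
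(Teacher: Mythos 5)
Your proposal is correct and takes essentially the same route as the paper: the paper's proof of this theorem consists entirely of the citation to Helgason's Theorems IV.4.2 and IV.7.2, and your argument is the standard proof of precisely those two theorems via the Cartan decomposition of the symmetric pair $(G,K)$, including the subalgebra $[\frak{a},\frak{a}]\oplus\frak{a}$ in the forward direction of (2). The only small inaccuracy is the remark that the minus sign in $R(X,Y)Z=-[[X,Y],Z]$ is ``the standard sign for non-compact type'' --- this formula holds at the basepoint of any Riemannian symmetric space regardless of type; only the sign of the resulting sectional curvature depends on compactness.
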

\begin{proof}
This follows essentially from \cite[Thm IV.4.2. and Thm IV.7.2]{Hel}.
\end{proof}

Assuming that there exists such a totally geodesic submanifold in the symmetric space $\mathscr{P}$,  we focus on those submanifolds which are of the highest dimension. 
A {\it maximally $r$-flat totally geodesic submanifold} is a totally geodesic submanifold such that $r$ is the biggest natural number where the totally geodesic submanifold is isometric to $\mathbb{R}^r$.
\begin{rem} Assume $\frak{a}\subseteq \ft$ is a maximal abelian subspace of dimension $r$. Then by Theorem~\ref{T:Hel}, $\F=\exp{\frak{a}}$ is a maximally $r$-flat totally geodesic submanifold in the symmetric space $\mathscr{P}$.\end{rem}

\subsubsection{Maximally flat totally geodesic submanifolds} We show the explicit existence of flat submanifolds in symmetric cones $\mathscr{P}_n$. By Theorem~\ref{T:Hel}, if there exists a Lie triple system $\frak{a}\subset \frak{t}$ one has a totally geodesic submanifold  defined by $\F=\exp{\frak{a}}$. For classes described in Table~\ref{T:table1}, there exists a non-empty class of Lie triple systems. A class of such  Lie triple systems are given by Cartan subalgebras of $\frak{t}$. 

\begin{prop}\label{P:exist}
  Assume $\mathscr{P}$ is one of the non-compact symmetric spaces in Table \ref{T:table1}. There exist totally geodesic submanifolds given by 
     $ \F=\exp{\frak{a}},$
  where $\frak{a}\subset \fg$  is Cartan subalgebra. 
\end{prop}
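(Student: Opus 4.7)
The plan is to reduce the statement to Theorem~\ref{T:Hel} by observing that every Cartan (i.e.\ maximal abelian) subspace trivially satisfies the Lie triple system condition. The existence part then follows from classical structure theory for each of the algebras attached to the cones in Table~\ref{T:table1}.

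First I would fix the notation carried over from Appendix~\ref{A:1}: the symmetric space $\mathscr{P}$ corresponds to the Cartan decomposition $\fg = \fC \oplus \ft$ of the relevant Lie algebra, where $\ft$ is identified with the tangent space $T_p\mathscr{P}$ at a basepoint $p$. A Cartan subspace $\frak{a}\subset\ft$ is by definition a maximal abelian subspace, so that $[X,Y]=0$ for all $X,Y\in\frak{a}$. The crucial but elementary observation is then that any abelian subspace is automatically a Lie triple system: for all $X,Y,Z\in\frak{a}$ one has $[[X,Y],Z]=[0,Z]=0 \in\frak{a}$, which is exactly the required inclusion $[[\frak{a},\frak{a}],\frak{a}]\subseteq \frak{a}$.

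Next I would invoke part (2) of Theorem~\ref{T:Hel} directly: since $\frak{a}$ is a Lie triple system inside $\ft \cong T_p\mathscr{P}$, the subset $\F = \exp(\frak{a})$ inherits a natural differentiable structure turning it into a totally geodesic submanifold of $\mathscr{P}$ with $T_p\F=\frak{a}$. Because $\frak{a}$ is abelian, the curvature formula of part (1) of Theorem~\ref{T:Hel} gives $R(X,Y)Z=-[[X,Y],Z]=0$ for $X,Y,Z\in\frak{a}$, so $\F$ is in fact flat; this is the desired maximally flat totally geodesic submanifold, consistent with the remark immediately preceding the statement.

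The only remaining point is to check that such a Cartan subspace exists for each of the four classes $\mathscr{P}_n(\K)$ with $\K\in\{\R,\C,\hH,\oO\}$ (the last one restricted to $n=3$). I would rely on standard Lie theory: the non-compact symmetric spaces in Table~\ref{T:table1} all admit an Iwasawa-type decomposition, and a canonical choice of $\frak{a}$ is the subspace of real diagonal matrices sitting inside $\ft$, whose exponential exhausts the positive diagonal matrices in $\mathscr{P}$. Verifying case by case that this choice is indeed maximal abelian in $\ft$ is the main concrete work, but it is routine for $\R$, $\C$, $\hH$, and handled separately for the exceptional case $\mathscr{P}_3(\oO)$ via the structure of the Albert algebra, which is where the only genuine subtlety lies. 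Once existence of $\frak{a}$ is secured in each class, the preceding two paragraphs apply uniformly and conclude the proof.
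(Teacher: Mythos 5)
Your proposal is correct and follows essentially the same route as the paper: both reduce to Theorem~\ref{T:Hel} via the observation that a maximal abelian subspace is automatically a Lie triple system, and both secure existence by standard structure theory of the Lie algebras attached to the cones in Table~\ref{T:table1}. If anything, your version is slightly more careful than the paper's, since you explicitly place $\frak{a}$ inside $\ft\cong T_p\mathscr{P}$ (as Theorem~\ref{T:Hel} requires) and spell out the computation $[[X,Y],Z]=0$, whereas the paper asserts these points and defers the concrete case-by-case description of $\frak{a}$ to Proposition~\ref{P:Matrix}.
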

\begin{proof}
Assume $\mathscr{P}$ is one of the non-compact symmetric spaces in Table \ref{T:table1}. 
Then the Lie algebra $\fg=\frak{gl}_n$ associated to $G=GL_n$ is a semisimple Lie algebra. 
There exists a class of Lie subalgebras $\frak{a}\subset\fg$  which are Cartan subalgebras of $\fg$. 
Cartan subalgebras are maximal abelian subalgebras of $\fg$ and form a Lie triple system.  The existence of totally geodesic submanifolds  is ensured by Theorem \ref{T:Hel}.
\end{proof}
\subsubsection{Classification of flat totally geodesic submanifolds in $\mathscr{P}_n$}
We prove algebraically that in $\mathscr{P}_n(\K)$ there exist totally geodesic submanifolds with vanishing sectional curvature.

\smallskip 

\begin{prop}\label{P:Matrix}
Assume $\mathscr{P}_n(\K)$ is a cone described in Table~\ref{T:table1}. 
For every such cone, there exists an $(n-1)$-dimensional totally geodesic submanifold $\F$ given by
 $\F:=\exp{\tilde{\frak{a}}},$ where ${\tilde{\frak{a}}}$ is the Cartan subalgebra of $\frak{gl}_n(\K)$ given by
${\tilde{\frak{a}}}=c I_n\oplus \frak{a}$, $c\in\K$ and $\frak{a}$ is described as follows:

\begin{table}[ht]
    \centering
   
    \begin{tabular}{|c|l|}
       \hline
    $\K$  & $\frak{a}$ is represented by \\
      & \\
    \hline
    $\R$   & \text{all diagonal matrices with real diagonal entries and such that the trace is 0} \\
    & \\
    
     $\C$    & \text{all diagonal matrices with diagonal entries $a+\imath b$ and such that the trace is 0}\\
       & \\
     $\hH$ & \text{ all diagonal matrices of} 
      $\left\{\left(\begin{smallmatrix}
        X& -\overline{Y}\\
        Y & \overline{X}
    \end{smallmatrix}\right)\, \mid \, \Re Tr X=0 \right\}$\\
      & \\
    $\oO$ & \text{all diagonal $(3\times 3)$ matrices with diagonal entries $a+\imath b$} \\
    &  \text{and diagonal matrices of a Cartan subalgebra of} $\frak{g}_{2}$
    \\
      & \\
      \hline
    \end{tabular}
    \caption{Some Cartan subalgebras}
    \label{T:Cartansubalgebra}
\end{table}
\end{prop}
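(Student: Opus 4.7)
The plan is to verify case by case, for each $\K \in \{\R, \C, \hH, \oO\}$, that the subspace $\tilde{\frak{a}} = cI_n \oplus \frak{a}$ described in Table~\ref{T:Cartansubalgebra} is a Cartan subalgebra of $\frak{gl}_n(\K)$, and then invoke Proposition~\ref{P:exist} together with Theorem~\ref{T:Hel} to conclude that $\F := \exp \tilde{\frak{a}}$ is totally geodesic in $\mathscr{P}_n(\K)$. The structural backbone in every case is the Cartan decomposition $\fg = \frak{k} \oplus \ft$ of the symmetric pair $(G,K)$ with $\mathscr{P}_n(\K) \cong G/K$: one exhibits the candidate as a subspace of $\ft$, checks commutativity, and verifies maximal abelianness against the known real rank of $\mathscr{P}_n(\K)$.

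For $\K = \R$ the decomposition is $\frak{gl}_n(\R) = \frak{o}_n(\R) \oplus \mathrm{Sym}_n(\R)$, and the space of real diagonal matrices lies inside $\mathrm{Sym}_n(\R)$. It is manifestly abelian, of rank $n$, and maximality is the standard fact that any symmetric matrix centralising a diagonal matrix with distinct eigenvalues is itself diagonal. Decomposing off the trace yields $\tilde{\frak{a}} = \R I_n \oplus \frak{a}$ with $\frak{a}$ the traceless diagonals, as tabulated. The case $\K = \C$ is formally identical after replacing $\mathrm{Sym}_n(\R)$ by the Hermitian matrices and $O_n$ by $U_n$; the diagonal entries are then general complex numbers $a + \imath b$ and the trace-zero condition again picks out $\frak{a}$.

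For $\K = \hH$ I would use the standard embedding $\hH \hookrightarrow M_2(\C)$ sending $a + bj$ to the $2 \times 2$ block $\left(\begin{smallmatrix} a & -\bar{b} \\ b & \bar{a} \end{smallmatrix}\right)$, realising $\frak{gl}_n(\hH)$ inside $M_{2n}(\C)$. The relevant symmetric pair is $(GL_n(\hH), Sp(n))$; a maximal abelian subspace of the quaternion-Hermitian part then consists of block-diagonal matrices of the form displayed in Table~\ref{T:Cartansubalgebra}, with commutativity and maximality following from the same diagonalisation argument as before, and the real-trace condition $\Re \, \mathrm{Tr} \, X = 0$ isolating the center.

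The main obstacle is the exceptional case $\K = \oO$, $n = 3$: since $\oO$ is non-associative, $\mathscr{P}_3(\oO)$ is not a classical matrix cone but the positive cone of the Albert (exceptional Jordan) algebra $\frak{h}_3(\oO)$, and the associated symmetric space is $E_{6(-26)}/F_4$, of real rank $3$. One cannot simply diagonalise; instead I would invoke the Tits--Kantor--Koecher construction linking $\frak{h}_3(\oO)$ to $\frak{e}_6$, together with Freudenthal's structure theory, to split a Cartan of $\frak{e}_{6(-26)}$ relative to $F_4$. This produces the three complex-diagonal degrees of freedom (corresponding to the $\C$-entries $a + \imath b$ along the diagonal) together with the rank-$2$ Cartan of $\frak{g}_2 = \mathrm{Der}(\oO)$, reproducing the description in Table~\ref{T:Cartansubalgebra}. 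Verifying maximal abelianness and matching Cartan ranks in this exceptional setting is where the argument demands the most care.
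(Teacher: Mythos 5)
Your proposal follows essentially the same route as the paper: identify, case by case, the Cartan decomposition $\fg=\frak{k}\oplus\ft$ for each division algebra, exhibit the abelian subspace $\frak{a}$ of diagonal matrices as a Lie triple system inside $\ft$, and invoke Theorem~\ref{T:Hel} (Helgason) to conclude that $\exp{\tilde{\frak{a}}}$ is totally geodesic. The only differences are ones of detail: you spell out the maximality argument (centraliser of a diagonal matrix with distinct eigenvalues) and propose Freudenthal/Tits--Kantor--Koecher machinery for the octonionic case, whereas the paper simply asserts the form of the Cartan subalgebra in each of the four cases.
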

\begin{proof}
By  \cite[Thm IV.7.2 ]{Hel}, totally geodesic submanifolds are of the form $\exp{\frak{a}}$, where $\frak{a}\subset \ft$ is a Lie triple system. 

The non-compact symmetric spaces considered here (Table \ref{T:table1}), lead to restricting attention only to the case of semi-simple Lie algebras. The question is to investigate the existence of Cartan subalgebras. 
In the case considered, those are maximal abelian subalgebras. The Cartan involution is given by $ X\mapsto -X^t$.

For real division algebras $\K$, the Cartan subalgebras of $\frak{sl}_n(\K)$ are  outlined below. 
\begin{enumerate}
    \item Assume $\K=\R$. The Lie algebra attached to the symmetric space has the Cartan decomposition $$\frak{sl}_n(\R)=\frak{so}_n\oplus \frak{s}ym_0(n),$$ where $\frak{s}ym_0(n)$ is the set of symmetric matrices of trace 0 with entries in $\R$. The maximal abelian subspace $\frak{a}$ of $\frak{s}ym_0(n)$ is given by the set of diagonal matrices of null trace.
Therefore, the maximally flat totally geodesic submanifold is given by  
\[\F=\exp{\frak{a}}=\{Diag(\lambda_1,\cdots, \lambda_n): \,  \lambda_i=\exp(t_i)\in\, \R,\,  \prod_{i=1}^n \lambda_i=1\}.\]

  \item Assume $\K=\C$. The Lie algebra $\frak{a}$ is given by diagonal matrices with complex entries. Therefore, the maximally flat geodesic submanifold is
\[\F=\exp{\frak{a}}=\{Diag(\lambda_1,\cdots, \lambda_n): \,  \lambda_i=\exp({a_i+\imath b_i})\in\, \C,\, \prod_{i=1}^n \lambda_i=1.\}\]

  \item  Assume  $\K=\hH$. The Lie algebra  $\frak{sl}_n(\hH)$ is given by diagonal matrices in $$\left\{\left(\begin{smallmatrix}
        X& -\overline{Y}\\
        Y & \overline{X}
    \end{smallmatrix}\right)\, |\, \Re Tr X=0 \right\}.$$ To obtain $\frak{a}$, take  the diagonal matrices in that set.

  \item Assume $\K=\oO$. The Lie algebra  $\frak{a}$ is given by all  diagonal $(3\times 3)$ matrices with diagonal entries $a+\imath b$ and diagonal matrices of a Cartan subalgebra of $\frak{g}_{2}$.
\end{enumerate}
So, we have given explicit examples of Lie triple systems $\frak{a}$, which generate totally geodesic submanifolds in $\mathscr{P}$. 
\end{proof}

\begin{lem}
The tangent space $T_p\F$ carries the structure of a commutative associative, unital algebra.
\end{lem}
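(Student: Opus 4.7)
The plan is to transfer the natural associative commutative algebra structure on $\tilde{\frak{a}}$, inherited from matrix multiplication in $\frak{gl}_n(\K)$, to $T_p\F$ via the canonical identification $T_p\F \cong \tilde{\frak{a}}$ coming from the abelian Lie group structure on $\F = \exp \tilde{\frak{a}}$. Since $\tilde{\frak{a}}$ is a Cartan subalgebra, it is abelian as a Lie subalgebra, so $\F$ is an algebraic torus; left translation then provides the desired linear isomorphism between $T_p\F$ and $\tilde{\frak{a}}$ for every $p \in \F$.

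The core of the argument is to verify that the restriction of the matrix product to $\tilde{\frak{a}}$ satisfies the three required properties. For \textbf{closure}, I would appeal to the explicit description in Proposition~\ref{P:Matrix}: every element of $\tilde{\frak{a}}$ is a diagonal matrix (possibly in the block form used for $\hH$), and the product of two such diagonal matrices is again diagonal of the same prescribed form. For \textbf{commutativity}, it suffices to observe that $XY - YX = [X,Y] = 0$ for all $X,Y \in \tilde{\frak{a}}$, by the abelian nature of the Cartan subalgebra. For the \textbf{unit}, the identity matrix $I_n$ belongs to $\tilde{\frak{a}} = cI_n \oplus \frak{a}$ by construction. \textbf{Associativity} follows from the associativity of matrix multiplication in $Mat_n(\K)$ whenever $\K \in \{\R, \C, \hH\}$.

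The main obstacle is the octonionic case $\K = \oO$, where the $3\times 3$ matrix product is not associative in general. The resolution is structural: by the last row of Table~\ref{T:Cartansubalgebra}, the diagonal entries of elements of $\tilde{\frak{a}}$ can be taken to lie in a fixed associative subalgebra of $\oO$ (for instance a copy of $\C \subset \oO$), so restricted associativity persists. As a cleaner alternative, one can replace the ordinary matrix product by the Jordan product $X \circ Y = \tfrac{1}{2}(XY + YX)$, which is automatically commutative and becomes associative on any family of pairwise-commuting elements; this applies in particular to $\tilde{\frak{a}}$, and agrees with the ordinary product on commuting matrices, thereby unifying all four cases under one structure.
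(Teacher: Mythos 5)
Your proof follows the same route as the paper: identify $T_p\F$ with the Cartan subalgebra $\tilde{\frak{a}}$ of diagonal matrices described in Proposition~\ref{P:Matrix} and endow it with the ordinary matrix product. You are in fact more careful than the paper's one-line argument, notably in flagging the non-associativity of $\oO$ (the paper silently relies on the diagonal entries lying in an associative subalgebra $\C\subset\oO$, which is exactly the point you make explicit, and your Jordan-product alternative matches the multiplication actually used later in Proposition~\ref{L:6}).
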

\begin{proof}
The algebra $\tilde{\frak{a}}$ corresponds to $T_p\F$.
The algebra $\tilde{\frak{a}}$ is generated by diagonal matrices. So,  this generates an associative, commutative and unital algebra on $T_p\F$ for the usual matrix multiplication product.\end{proof}
\section{Pre-Frobenius cone and algebraic torus}\label{S:Pre-Frob}
We give a second proof that the cones $\mathscr{P}$ carry a pre-Frobenius structure and find a Frobenius manifold isometrically immersed in $\mathscr{P}$. This approach does not rely on elliptic Monge--Ampère equations. It allows a more detailed study of the geometry of the cones since we construct explicitly the metric, the rank 3 symmetric tensor and the potential function. 

\subsection{}
Let $\mathscr{A}$ be an $n$-dimensional  algebra over $\K$ (possibly with unit ${\bf 1}_\mathscr{A}$). Denote the basis by $\{e_i\}_{i=1}^n$. The structure constants  $C^i_{jk}$ are components of the (1,2)-tensor\, $\circ: \mathscr{A} \times \mathscr{A} \to \mathscr{A}$ such that: $e_i\circ e_j=\sum_s C_{ij}^s e_s,\quad i,j,s\in \{1,\cdots, n\}.$

\medskip 
\begin{itemize}
    \item  If $\mathscr{A}$ is associative then 
\begin{equation}\label{E:Assoc}
    \sum_c C_{ab}^cC_{cd}^f=    \sum_c C_{ad}^cC_{cb}^f.
\end{equation} 
    \item  If $\mathscr{A}$ is commutative then  \begin{equation}\label{E:comm} C^s_{ab}=C^s_{ba}.\end{equation}
\end{itemize}

\subsection{Koszul--Vinberg algebra}\label{S:KV}

By Koszul \cite{Kos59,Kos61,Kos62,Kos68A,Kos68B,Kos85}, important relations can be established between the space of connections on an affine flat manifold and the Koszul--Vinberg (KV) algebra, also known as a pre-Lie algebra. 

Let us introduce a multiplication operation $\circ$ on the tangent sheaf $\cT_M$, where $M$ is an affine flat manifold $M$
such that: \begin{equation}\label{E:Koszul} 
X\circ Y:=\nabla_X(Y), \end{equation} where $X,Y$ are vector fields. This algebraic structure $(\cT_M,\circ)$ forms a KV algebra. 

Consider $X,Y$ two vector fields in $\cT_M$. If $M$ carries an affine flat structure then the following relations are satisfied:
 \begin{equation}
\nabla_X(Y)-\nabla_Y(X)-[X,Y]=0,
\end{equation}
also, written as:
\[\nabla_X\nabla_Y-\nabla_Y\nabla_X-\nabla_{[X,Y]}=0,\]
where $X,Y\in \cT_M$.

One can easily check that the  multiplication operation $``\circ"$  given by $X\circ Y=\nabla_X(Y)$ defines a commutative algebra satisfying the relation:

\[a\circ (b\circ c)- b\circ (a\circ c) = (a\circ b)\circ c- (b\circ a)\circ c,\]
for $a,b,c\in T_M$

 An algebra $(\mathscr{A},\circ)$ is a {\it pre-Lie algebra (or Koszul--Vinberg KV algebra)} if for all $ a,b,c\in \mathscr{A},$ we have $(a \, \circ \,  b) \,  \circ \,  c-a \, \circ \,  (b \, \circ  \, c)=(b \, \circ \,  a) \, \circ  \, c-b \, \circ  \, (a \, \circ \,  c).$

This is also known as a Lie-admissible algebra. Each Lie algebra with affine structure is derived from a Lie-admissible algebra. 

\smallskip 

We illustrate this below in the context of $\mathscr{P}$.

\begin{dfn}\label{P:FA}
Let $\mathscr{P}$ be as in Table~\ref{T:table1}.
For any point $x\in \mathscr{P}$, the tangent sheaf $(T\mathscr{P},\circ)$ forms a  pre-Lie algebra, where  the multiplication is given in local affine coordinates as follows:

\begin{equation}\label{E:circ}
(X\circ Y)^i=-\sum_{j,k}\Gamma^{i}_{jk}(x)X^jY^k\quad 1\leq i\leq n,\end{equation}
where:
\begin{itemize}
\item $X,Y\in T_\mathscr{P}$ are vector fields; 
    \item the structure constants of the algebra are given by $\Gamma^{i}_{jk}$, where $\Gamma^{i}_{jk}=\frac{1}{2}\sum_l\partial_{jkl}\Phi g^{li}$ 
and $\Phi$ is a potential function;
     
    \item $\partial_{jkl}\Phi=A_{jkl}$ forms a rank three symmetric tensor. 
\end{itemize}
\end{dfn}

This multiplication operation is  commutative, because the connection is torsionless.  
We denote this KV algebra by $(T_\mathscr{P},\circ)_{KV}$.

\subsection{Formally real Jordan algebra}\label{S:4.3}
The classification of real Jordan algebras associated to cones of Table~\ref{T:table1} is well known. This classification goes back to Jordan, von Neumann and Wigner (1934). A real Jordan algebra  $\cJ$ attached to a given cone $\mathscr{P}$ is obtained by introducing an operation $X\bullet Y =L(X)Y,$ where $L(X)$ is an endomorphism (see \cite[Theorem III.3.1]{FK} \cite[Ch.II, paragraphs 4--5]{Ko}). 
\subsubsection{}
By Vinberg's construction \cite[Ch.III]{Vin}, there exists a relation between the algebra $(T_\mathscr{P},\circ)_{KV}$ and the formally real Jordan algebra $(\cJ,\bullet)$ so that the algebra $(T_\mathscr{P},\circ)_{KV}$ generates a real Jordan algebra  $(\cJ,\bullet)$. Recall this construction.

\smallskip 

 Let $(\mathscr{U},\star)$ be a $T-$algebra of rank $m$ such that:
\begin{itemize}
\item $\mathscr{U}$ is bigraded by subspaces $\mathscr{U}_{ij}$, where $i,j= 1,\cdots, n$ such that $\mathscr{U}_{ij}\mathscr{U}_{jk}\subset \mathscr{U}_{ik}$ and $\mathscr{U}_{ij}\mathscr{U}_{lk}=0$, if $j\neq l$; 
\item the symbol $\star$ stands for an involute anti-automorphism. It is a linear mapping of $\mathscr{U}$ onto itself satisfying the following conditions: $a^{\star\star}=a;$ $(ab)^\star=b^\star a^\star $ and $\mathscr{U}_{ij}^\star\subset \mathscr{U}_{ji}.$
\end{itemize}

By \cite[Ch.III, paragraph 1]{Vin}, the algebra $\mathscr{U}$ can be decomposed into a pair of subspaces denoted $\mathscr{X}$ and $\Theta$ such that $\mathscr{U}=\mathscr{X} \oplus \Theta$,
where: \begin{itemize} \item $\mathscr{X} $ is the subspace of Hermitian matrices $\mathscr{X} :=\{ X\in \mathscr{U}\,|\,X^\star=X \}$ and 
\item $\Theta$ is the subspace of skew-Hermitian matrices $\Theta= \{ X\in \mathscr{U}\,|\,X^\star= -X \}$.\end{itemize} 
For every convex homogeneous cone $\mathscr{P}$ there exists a unique $T$-algebra $\mathscr{U}$, up to isomorphism, such that the cones $\mathscr{P}$ and $\mathscr{P}(\mathscr{U})$ are isomorphic, by \cite[Theorem 4]{Vin}.

Let $T=T(\mathscr{U})$ be a connected Lie group of upper triangular matrices in $\mathscr{U}$ with positive diagonal elements. Let $\ft$ be the associated Lie algebra to $T(\mathscr{U})$. Every Hermitian matrix can be written in the form $XX^\star $, where $X\in T(\mathscr{U})$. Therefore, the homogenous convex cone can be expressed in the form $\mathscr{P}(\mathscr{U})=\{XX^\star\, |\, X\in T(\mathscr{U})\}$ (see \cite[Ch.III, paragraph 3]{Vin}), where the Lie group $T(\mathscr{U})$ acts linearly and simply transitively on this homogeneous cone.  Let $e$ be a unit matrix of $T(\mathscr{U})$. 

Consider the map 
\begin{align*}
F: \ft \to &\quad \Xi\\
X\mapsto& \quad XX^\star.\\
\end{align*}

Then, the following isomorphism exists: 

\begin{equation}\label{E:Iso}
\begin{split}
dF: \ft \to & \quad \Xi \\
X\mapsto  & \quad X+X^\star.
\end{split}
\end{equation}

Given that the tangent space to the homogeneous cone at the origin can be identified naturally with $\mathscr{X}$ and also with the Lie algebra $\ft$ associated to the Lie group $T$ (see \cite[Ch.III, paragraph 2]{Vin}),  the isomorphism  $dF$ endows  $\ft$ with a multiplication operation $\bullet$.  This follows from the fact that the multiplication $\circ$ is defined on the tangent space to the cone $\mathscr{P}$.

Therefore, for every $a, b \in \ft$:  
\begin{equation}\label{E:bullet}
a \bullet b= dF^{-1}(dF(a) \circ dF(b)).
\end{equation}

In particular, for every $a, b, \in \ft$ we obtain
\begin{equation}\label{E:Jordan}
a \bullet b=  \frac{1}{2}(dF^{-1}(dF(a)dF(b) +  dF(b)dF(a))), 
\end{equation}
where $dF(a), dF(B)\in \mathscr{X}$ are hermitian matrices (i.e. elements $T_p\mathscr{P}$).

\subsubsection{} Given an associative algebra $(\mathscr{A},\cdot)$, a standard method provides a Jordan algebra, where the Jordan product can be given by $X\bullet Y= \frac{1}{2}(X\cdot Y+Y\cdot X)$, where $X,Y\in \mathscr{A}$ and $\cdot$ is the product operation in $\mathscr{A}$.
The tangent space at the origin $e$ to $\mathscr{P}$ is represented by the space of symmetric  matrices  (or hermitian if $\mathbb{K}$ is $\C, \hH$ or $\oO$) (see Table~\ref{tab:Jordan}). The space of symmetric matrices  (or hermitian) of a given size equipped with the standard matrix multiplication forms an associative algebra. Therefore, for every cone $\mathscr{P}$ there exists a real Jordan algebra associated to it. It is obtained by introducing the following operation 
$X\circ Y= \frac{1}{2}(XY+YX),$ where $XY$ is the classical matrix product and $X,Y$ are symmetric (or hermitian) matrices.

We invoke the following statement concerning those real Jordan algebras.

\begin{lem}\cite[Ch. VI, Theorem 12, paragraph 4, p.118]{Ko}\label{L:Jordan}
Let $\cJ$ be a real Jordan algebra associated to a cone $\mathscr{P}$ (see Table \ref{T:table1}). Then the following statements hold:

\begin{itemize}
\item $\cJ$ is formally real;
\item there exists a positive definite bilinear form $\langle-,- \rangle$ satisfying 
\begin{equation}\label{E:a}
\langle x,y\circ z \rangle=\langle x\circ y,z \rangle, \quad x,y,z \in \cJ.\end{equation}
\end{itemize}
\end{lem}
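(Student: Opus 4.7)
The plan is to use the concrete realization of $\cJ$ given in Section~\ref{S:4.3}: the Jordan algebra attached to the cone $\mathscr{P}_n(\K)$ is the space of symmetric (respectively hermitian) $n\times n$ matrices over $\K$, equipped with the product $X\circ Y=\tfrac{1}{2}(XY+YX)$. Both assertions then reduce to the spectral theory of hermitian matrices plus cyclicity of trace, and I would treat the two bullets separately.

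For the first claim (formally real), suppose $\sum_{i=1}^{k}X_i\circ X_i=0$ with $X_i\in\cJ$. Since each $X_i$ is hermitian, $X_i\circ X_i=X_i^{2}$ is positive semi-definite with non-negative real eigenvalues. Taking the real trace gives
\[
\sum_{i=1}^{k}\Re\mathrm{Tr}(X_i^{2})=0,
\]
and each summand is itself the sum of squares of the real eigenvalues of $X_i$. Non-negativity of the summands forces every eigenvalue of every $X_i$ to vanish, and hermiticity then gives $X_i=0$. In the exceptional Albert case $\K=\oO$, $n=3$, one replaces the elementary spectral decomposition by Freudenthal's spectral theorem, which still provides a real eigenvalue decomposition of a hermitian $3\times 3$ octonionic matrix, so the same argument applies.

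For the second claim, I would set $\langle X,Y\rangle:=\Re\mathrm{Tr}(X\circ Y)$. Positive definiteness is immediate: $\langle X,X\rangle=\Re\mathrm{Tr}(X^{2})$ equals the sum of squares of the real eigenvalues of $X$, hence is strictly positive unless $X=0$. The invariance identity $\langle x,y\circ z\rangle=\langle x\circ y,z\rangle$ follows by expanding both sides as
\[
\tfrac{1}{2}\Re\mathrm{Tr}\bigl(x(yz+zy)\bigr)\quad\text{and}\quad\tfrac{1}{2}\Re\mathrm{Tr}\bigl((xy+yx)z\bigr),
\]
and then matching the four monomials pairwise via $\Re\mathrm{Tr}(AB)=\Re\mathrm{Tr}(BA)$.

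The main obstacle is the octonionic case. Over $\oO$ matrix multiplication is non-associative, so the manipulation $(xy)z=x(yz)$ used implicitly above is no longer automatic. The way around this is to invoke the fact, specific to hermitian $3\times 3$ octonion matrices, that $\Re\mathrm{Tr}\bigl((XY)Z\bigr)=\Re\mathrm{Tr}\bigl(X(YZ)\bigr)$ still holds despite the failure of associativity in $\oO$; this is precisely the identity that distinguishes the Albert algebra as one of the five exceptional formally real Jordan algebras in the Jordan--von Neumann--Wigner classification. Alternatively, one can appeal directly to that classification and to Koecher's construction to obtain the invariant bilinear form abstractly, avoiding matrix manipulations altogether.
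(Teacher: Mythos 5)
Your proof is correct, but it takes a different route from the paper: the paper offers no argument at all for this lemma, simply citing it as \cite[Ch.~VI, Theorem~12]{Ko} from Koecher's general theory of formally real Jordan algebras, whereas you unpack the statement concretely in the matrix realization $X\circ Y=\tfrac12(XY+YX)$ with the trace form $\langle X,Y\rangle=\Re\mathrm{Tr}(X\circ Y)$. This is consistent with how the paper itself uses the form later (Eq.~\eqref{E:bili} is exactly $\Re\,Tr(XY)$), so your explicit verification is the natural one. Two points deserve care and you handle both: over $\hH$ cyclicity of the trace fails without taking real parts, and you correctly use $\Re\mathrm{Tr}(AB)=\Re\mathrm{Tr}(BA)$; over $\oO$ associativity fails and you correctly isolate the needed identity $\Re\mathrm{Tr}((XY)Z)=\Re\mathrm{Tr}(X(YZ))$ for hermitian $3\times 3$ octonionic matrices (equivalently, associativity of the trace form on the Albert algebra), falling back on Freudenthal's spectral theorem for formal reality. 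The only cosmetic looseness is the phrase ``sum of squares of the real eigenvalues'' in the quaternionic case, where eigenvalue language is delicate; the cleaner observation is that $\Re\mathrm{Tr}(X^2)=\Re\mathrm{Tr}(XX^\star)=\sum_{i,j}|x_{ij}|^2$ for hermitian $X$, which gives both formal reality and positive definiteness at once. What the paper's citation buys is uniformity (it covers all formally real Jordan algebras abstractly, including spin factors not realized as matrix algebras); what your argument buys is a self-contained, checkable proof for precisely the four cones listed in Table~\ref{T:table1}, which is all the paper needs.
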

This is an important statement because it implies the existence of a positive definite bilinear form satisfying associativity on $\cJ$. 

\subsection{Pre-Frobenius cones}
The following statement provides a new proof of the fact that the cones $\mathscr{P}$ carry a pre-Frobenius structure. This allows not only to prove the existence and explicitly describe the locus of a Frobenius manifold in $\mathscr{P}$. We provide explicitly the potential function, Hessian metric and rank three symmetric tensor as well as the multiplication operation on the tangent sheaf for the pre-Frobenius structure. The proof of Theorem~\ref{T:main} does not allow such a degree of precision. 

\begin{prop}\label{P:pre-Fro}
Let  $\mathscr{P}$ be a cone such as in Table~\ref{T:table1}. Then, it carries a pre-Frobenius structure. 
\end{prop}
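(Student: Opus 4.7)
The plan is to verify the five data (i)--(v) of a pre-Frobenius structure directly from the ambient affine and Jordan-algebraic geometry of $\mathscr{P}$, without appealing to Monge--Amp\`ere theory. Since $\mathscr{P}$ is an open subset of the real vector space $\mathrm{Sym}_n(\K)$ of symmetric (or Hermitian) matrices, the tautological flat torsionless affine connection on $\mathrm{Sym}_n(\K)$ restricts to $\mathscr{P}$; the matrix entries provide global affine coordinates, furnishing $\con{0}$ and condition (i). For the remaining data I would take as potential the characteristic function $\Phi(x) = -\log\det x$ of the cone, which by the classical Koecher--Vinberg theory is smooth and strictly convex on $\mathscr{P}$. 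Setting $g_{ij} := \partial_i\partial_j \Phi$ and $A_{ijk} := \partial_i\partial_j\partial_k \Phi$ in these coordinates simultaneously produces a positive definite Hessian metric compatible with $\con{0}$, giving (ii), and a totally symmetric rank three tensor $A$ with $A(X,Y,Z) = (XYZ)\Phi$ on flat vector fields, giving (iii); the symmetry of $A$ and the Codazzi identity are automatic from smoothness of $\Phi$, as in Sec.~\ref{S:Codazzi}. For (iv) I would use the Koszul--Vinberg multiplication $\circ$ on $\cT_\mathscr{P}$ of Definition~\ref{P:FA}: torsion-freeness of $\con{0}$ makes $\circ$ commutative, and the structure constants $\Gamma^{i}_{jk} = \tfrac{1}{2}\sum_{l} A_{jkl} g^{li}$ are exactly the partial dualization $A\cdot g^{-1}$ required.

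The decisive step is (v), namely $A(X,Y,Z)=g(X\circ Y,Z)=g(X,Y\circ Z)$. For this I would invoke Vinberg's construction from Sec.~\ref{S:4.3}: the isomorphism $dF$ of Eq.~\eqref{E:Iso} transports the KV product at the unit $e\in\mathscr{P}$ to the Jordan product $\bullet$ on the formally real Jordan algebra $\cJ$ attached to the cone, as in Eq.~\eqref{E:Jordan}. Lemma~\ref{L:Jordan} then provides a positive definite bilinear form on $\cJ$ satisfying $\langle x\bullet y,z\rangle=\langle x,y\bullet z\rangle$, which upon pullback by $dF$ is precisely the symmetry asserted in (v). Since the Lie group $T(\mathscr{U})$ acts simply transitively on $\mathscr{P}$ and intertwines $\con{0}$, $g$, and $\circ$ with its own left translations, it suffices to check (v) at the single point $e$: homogeneity then propagates the identity to every point of $\mathscr{P}$, completing the five axioms.

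The main obstacle is the identification step hidden in the previous paragraph. One must verify that the positive definite form furnished by Lemma~\ref{L:Jordan} agrees, under $dF$, with the Hessian metric of $\Phi=-\log\det$ at $e$, and that the KV product built from $\con{0}$ and $\Phi$ agrees at $e$ with the Jordan product pulled back through Eq.~\eqref{E:bullet}. The first matching is essentially classical: up to a positive scalar per irreducible factor, the trace form $\mathrm{tr}(xy)$ is the unique associative inner product on $\cJ$, and a direct computation shows that the Hessian of $-\log\det$ at the unit reproduces $\mathrm{tr}(xy)$. The second reduces to checking that, in the $T$-algebra normal form of Sec.~\ref{S:4.3}, the Christoffel symbols of $\con{0}$ with respect to a $T(\mathscr{U})$-invariant frame compute precisely to the Jordan structure constants of Eq.~\eqref{E:Jordan}; once this local comparison is in place, Proposition~\ref{P:pre-Fro} follows, and the reader also gains an explicit description of the metric, the rank three tensor, and the multiplication on $\cT_\mathscr{P}$.
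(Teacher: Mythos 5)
Your proposal is correct in substance and covers all five axioms, but it takes a genuinely different route from the paper at the decisive step (v), and the detour creates the only real difficulty in your write-up. The paper's proof establishes (i) via the Koszul/pre-Lie correspondence for convex homogeneous cones, takes as potential the logarithm of the Koszul--Vinberg characteristic function $\Phi=\ln\chi$ (which on these symmetric cones agrees with your $-\log\det$ up to a positive scalar and an additive constant, so (ii)--(iv) come out the same), and then verifies (v) by a two-line computation in flat coordinates: since $\circ$ is \emph{defined} by the partial dualization $A_{ab}^c=\sum_e A_{abe}g^{ec}$, one has $g(\partial_a\circ\partial_b,\partial_c)=\sum_{e,f}A_{abf}g^{fe}g_{ec}=A_{abc}$, and total symmetry of $A$ gives the other equality. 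In other words, (v) is essentially tautological once (ii)--(iv) are in place; no Jordan-algebraic input is needed. Your route instead transports the problem through Vinberg's isomorphism $dF$ to the Jordan product of Eq.~\eqref{E:Jordan} and invokes the associative trace form of Lemma~\ref{L:Jordan}, which forces you to verify the two identifications you flag as ``the main obstacle'' (Hessian of $-\log\det$ at $e$ versus $\Re\,\mathrm{tr}(xy)$, and KV structure constants versus Jordan structure constants). Both identifications are true and classical, but you leave them as reductions rather than completing them, and they are entirely avoidable here. The paper reserves exactly this Jordan-algebra machinery for the genuinely harder later step --- Proposition~\ref{L:6} and Theorem~\ref{T:Frobenius}, where one needs \emph{associativity} of $\circ$ on the flat torus $\F=\exp\tilde{\frak{a}}$, i.e.\ the sixth (Frobenius) axiom, not the fifth. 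What your approach buys is an early, explicit link between the pre-Frobenius data and the Jordan structure; what it costs is importing an identification that the pre-Frobenius claim does not require. I would recommend replacing your paragraph on (v) by the direct coordinate computation and keeping the Jordan-algebra comparison as a remark pointing forward to the Frobenius locus.
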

\begin{proof}
\begin{enumerate}[label=\alph*)]
\item Let us prove that (i) of Sec. \ref{Enum} is satisfied. Assume $\mathscr{P}$ is a cone from Table~\ref{T:table1}. Then, the cone $\mathscr{P}$ carries an affine flat structure. 
\item[]
\item[] The proof of the existence of an affine flat structure follows from works of Koszul (see for example~\cite{Kos61}). A more modern summary can be found encapsulated in~\cite[Theorem 2.22]{Bu}. This theorem states that 
there is a one-to-one correspondence between $n$-dimensional convex symmetric cones and $n$-dimensional pre-Lie algebras. In other words, $\mathscr{P}$ comes equipped with a torsionless affine flat connection $\con{0}$ (see Sec. \ref{S:KV} and Definition \ref{P:FA} for details on the construction). 

\item To prove that (ii) and (iii) of Sec.~\ref{Enum}, we need to introduce a potential function, defined everywhere locally on the cones. There exists an explicit construction of a potential function $\Phi$ defined everywhere locally on the cones. This is obtained as follows. Consider the Koszul--Vinberg (KV) function $\chi(x)$: 

\begin{equation}
\chi(x)=\int_{\mathscr{P}^*}\exp{-\langle x,a^*\rangle} da^*
\end{equation} 
where $da^*$ is a volume form invariant under translations in the dual cone $\mathscr{P}^*$ (see Appendix \ref{A:1} for details on the dual cone). The potential function $\Phi=\ln\chi(x)$ is defined everywhere on the open cone. Properties of the KV-function are enumerated as follows:
\begin{itemize}
\item The KV-function tends to infinity on the boundary of the cone. 
\item From the definition of $\chi$ we have that for any $x\in \mathscr{P}$ and any $g\in GL_n,$ one has $$\chi(gx)=|\det g|^{-1}\chi(x).$$ The differential form $\alpha=d\chi/\chi$ is invariant under $GL_n$. 
\item  We have that $\Phi=\ln\chi$ is strictly convex (by H\"older's inequality).  
\end{itemize}

 Let us prove that (ii) of Sec.~\ref{Enum} is satisfied. There exists a class of Riemannian metrics compatible with $\con{0}$ and forming a Hessian metric. The canonical Riemannian metric attached to the cone is constructed from 
$\mathrm {Hess}(\ln\chi(x))$. It is invariant under the automorphism group $GL_n.$ 

\item[]

\item[] Therefore, in local coordinates, assuming $(x^1,\cdots, x^n)$ forms an affine coordinate system on $\mathscr{P}$, the convex homogeneous domain  admits an invariant volume element defined as  $\Phi dx^1\wedge \cdots \wedge dx^n$, where $\Phi=\ln\chi(x)$.  The canonical bilinear form is: 
\begin{equation}\label{E:Riem}
g=\sum_{i,j}\frac{\partial^2\Phi}{\partial x^i\partial x^j}dx^idx^j,
\end{equation} 
where $\Phi=\ln\chi(x)$ is a real-valued convex function. The  canonical bilinear form $g$ is positively definite. This gives the Riemannian metric and defines the Hessian structure.

    \item Let us prove that (iii) of Sec.~\ref{Enum} is satisfied. There exists a symmetric rank three tensor on $\mathscr{P}.$
Because of the existence of the potential function $\Phi$ one can introduce a rank 3 symmetric tensor $A$ given by $$A_{ijk}=\partial_i\partial_j\partial_k\Phi.$$ So, we have proved that (iii) of Sec.~\ref{Enum} is satisfied.

    \item Let us prove that (iv) of Sec.~\ref{Enum} is satisfied. The multiplication operation on the tangent sheaf is precisely defined in Definition~\ref{E:circ}, in Sec. \ref{S:KV}.

    \item Let us prove that (v) of Sec.~\ref{Enum} is satisfied. In local coordinates, we have:

  \begin{align*}
g(\partial_a \circ \partial_b,\partial_c)= & g\left(\sum_{e,f}\frac{1}{2}\partial_{abf}\Phi g^{fe}p\partial_e,\partial_c\right)= \sum_{e,f}\frac{1}{2}\partial_{abf}\Phi g^{fe}g(\partial_e,\partial_c) \\
 &=\sum_{e,f}\partial_a\partial_b\partial_f\Phi g^{fe} g_{ec}=A_{abc}= \partial_a\partial_b\partial_c\Phi.\\
\end{align*}

 On the other side, 
 
 \begin{align*}
 g(\partial_a, \partial_b \circ\partial_c)=& g\left(\partial_a,\sum_{e,f}\frac{1}{2}\partial_{bcf}\Phi g^{fe}\partial_e \right)=\sum_{e,f}\frac{1}{2}\partial_{bcf}\Phi g^{fe}g(\partial_a,\partial_e)\\
 &=\sum_{e,f}\partial_b\partial_c\partial_f\Phi  g^{fe} g_{ea}=  A_{bca}=\partial_b\partial_c\partial_a\Phi.\\
   \end{align*}
    
So, by symmetry of  $A$, this gives the following  relation: $$A(X,Y,Z)=g(X\circ Y, Z)=g(X,Y\circ Z),$$ for  flat vector fields $X,Y,Z$. 
\end{enumerate}

We have thus proved that the manifold $\mathscr{P}$ is pre-Frobenius.

 \end{proof}

\subsection{Frobenius manifold in $\mathscr{P}$}\label{S:Frob}


\begin{lem}\label{L:ConnFrob}
Let $\cJ$ be a formally real Jordan algebra. Assume that there exists a subalgebra $ \mathscr{A} \subset \cJ$
which is associative, commutative and unital. Then, $\mathscr{A}$ is a Frobenius algebra. 
\end{lem}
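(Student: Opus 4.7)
The plan is to deduce everything by restricting structures from the ambient formally real Jordan algebra $\cJ$ to the subalgebra $\mathscr{A}$. By hypothesis, $\mathscr{A}$ is finite dimensional (as a subalgebra of the finite dimensional Jordan algebra attached to a cone in Table~\ref{T:table1}), associative, commutative, and unital, so what remains is to equip $\mathscr{A}$ with a non-degenerate symmetric bilinear form satisfying the invariance relation $\langle x\circ y,z\rangle=\langle x,y\circ z\rangle$.

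First, I would invoke Lemma~\ref{L:Jordan} to obtain on $\cJ$ a positive definite symmetric bilinear form $\langle-,-\rangle$ satisfying Eq.~\eqref{E:a}, namely $\langle x,y\circ z\rangle=\langle x\circ y,z\rangle$ for all $x,y,z\in\cJ$. Then I would take the restriction $\langle-,-\rangle|_{\mathscr{A}\times\mathscr{A}}$. Symmetry is immediate, and positive definiteness on $\cJ$ restricts to positive definiteness on the linear subspace $\mathscr{A}$, which in particular forces non-degeneracy on $\mathscr{A}$. This takes care of the bilinear form axiom.

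Next I would check the invariance relation on $\mathscr{A}$. Because $\mathscr{A}$ is a subalgebra, the Jordan multiplication $\circ$ restricts to a multiplication on $\mathscr{A}$ that coincides with the (associative, commutative, unital) product assumed on $\mathscr{A}$. Consequently, for any $x,y,z\in\mathscr{A}\subset\cJ$, Eq.~\eqref{E:a} applied in $\cJ$ yields
\begin{equation*}
\langle x\circ y,z\rangle=\langle x,y\circ z\rangle,
\end{equation*}
which is precisely the invariance axiom in the definition of a Frobenius algebra.

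Combining these observations, $\mathscr{A}$ satisfies all the axioms of a Frobenius algebra. The main (and essentially only) subtlety is verifying that the product used in the hypothesis on $\mathscr{A}$ really is the restriction of the Jordan product $\circ$ on $\cJ$, so that the invariance relation transfers verbatim; since $\mathscr{A}$ is said to be a subalgebra of $\cJ$, this identification is automatic, and the proof amounts to restriction of a positive definite invariant inner product.
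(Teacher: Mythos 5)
Your proposal is correct and follows essentially the same route as the paper: both invoke Lemma~\ref{L:Jordan} to obtain the invariant positive definite form on $\cJ$ and then restrict it to the subalgebra $\mathscr{A}$. The paper's own proof is a two-line version of your argument; you merely spell out the (correct) verifications that positive definiteness survives restriction and that the invariance identity transfers because $\mathscr{A}$ is a subalgebra.
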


\begin{proof}

Assume there exists a subalgebra $\mathscr{A}$ of $\cJ$ which is  commutative, associative and unital. By Lemma~\ref{L:Jordan}\, , $\cJ$  is equipped with a symmetric bilinear form satisfying the associativity condition in Eq.~\eqref{E:a}. So, it implies that $\mathscr{A}$ is a Frobenius algebra.
\end{proof}

\begin{prop}\label{L:6}
The algebra $(T_p\F,\circ)$ forms a Frobenius algebra. 
\end{prop}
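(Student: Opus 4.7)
The plan is to identify $(T_p\F,\circ)$ with a commutative, associative, unital subalgebra of the formally real Jordan algebra $\cJ$ attached to the cone $\mathscr{P}$, and then invoke Lemma~\ref{L:ConnFrob}. The starting point is the fact, established just before, that $T_p\F$ is a commutative, associative, unital algebra with respect to the ordinary matrix multiplication of diagonal matrices (the Cartan subalgebra $\tilde{\frak{a}}=cI_n\oplus\frak{a}$ from Proposition~\ref{P:Matrix}).

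The first step is to reconcile two a priori different multiplications on $T_p\F$: the one inherited from the pre-Frobenius structure of $\mathscr{P}$ via $X\circ Y=-\sum_{j,k}\Gamma^{i}_{jk}X^jY^k$ in affine coordinates (Definition~\ref{P:FA}), and the one coming from ordinary matrix multiplication of diagonal matrices. Using Vinberg's isomorphism $dF:\ft\to\mathscr{X}$ in Eq.~\eqref{E:Iso}, the product $\circ$ on $T_p\mathscr{P}\cong\ft$ corresponds under $dF$ to the Jordan product $\bullet=\frac{1}{2}(XY+YX)$ on hermitian matrices, as recalled in Eq.~\eqref{E:Jordan}. Restricted to the diagonal Cartan subalgebra, the matrices commute, so the Jordan symmetrization collapses to ordinary matrix multiplication:
\[
X\bullet Y=\tfrac{1}{2}(XY+YX)=XY,\qquad X,Y\in \tilde{\frak{a}}.
\]
Hence the $\circ$ product on $T_p\F$ coincides (via $dF$) with the diagonal matrix product, and $(T_p\F,\circ)$ is a genuine subalgebra of $(\cJ,\bullet)$ that is commutative, associative, and unital (the unit being $I_n$, corresponding to $e$).

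The second step is to extract the non-degenerate invariant bilinear form. By Lemma~\ref{L:Jordan}, the Jordan algebra $\cJ$ carries a positive definite bilinear form $\langle-,-\rangle$ satisfying the associativity $\langle x,y\bullet z\rangle=\langle x\bullet y,z\rangle$; this is precisely the Hessian metric $g$ given by Eq.~\eqref{E:Riem} evaluated at $p$, and its restriction to the subspace $T_p\F$ remains non-degenerate since it is positive definite. Thus $(T_p\F,\circ,g|_{T_p\F})$ satisfies exactly the hypotheses of Lemma~\ref{L:ConnFrob}, and therefore is a Frobenius algebra. The only step that requires care is the compatibility between the KV-multiplication $\circ$ on $T_p\mathscr{P}$ and the Jordan product $\bullet$ on $\cJ$; once one observes that on the abelian Cartan piece the Jordan symmetrization is trivial, the remainder of the argument is a direct application of the preceding lemmas.
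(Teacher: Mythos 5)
Your proposal is correct and follows essentially the same route as the paper's own proof: both identify $T_p\F$ with the Cartan subalgebra $\tilde{\frak{a}}$ of diagonal matrices, observe that the Jordan product $\frac{1}{2}(XY+YX)$ collapses to ordinary matrix multiplication there, transfer the structure through the isomorphism of Eq.~\eqref{E:Iso}--\eqref{E:bullet}, and conclude via Lemma~\ref{L:Jordan} and Lemma~\ref{L:ConnFrob}. Your explicit remark that the KV-product $\circ$ and the Jordan product $\bullet$ must be reconciled before the lemmas apply is exactly the content of part (c) of the paper's argument, so no genuinely new or missing ideas are involved.
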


\begin{proof}
\begin{enumerate}[label=\alph*)]
    \item First, let us prove that $(T_p\F, \circ)$ is an associative, commutative and unital algebra. The Cartan algebra $\tilde{\frak{a}}$  described in Proposition \ref{P:Matrix} corresponds to the tangent space $T_p\F$. The Lie algebra $\tilde{\frak{a}}$ is represented by diagonal matrices. Consider the algebra $(\tilde{\frak{a}},\bullet)$. If we use the multiplication operation such as described in Eq.~\eqref{E:Jordan}, we get that $X\bullet Y= \frac{1}{2}(XY +YX)$, where $X,Y$ are a pair of diagonal matrices. In that case, the multiplication $X\bullet Y$ coincides with the classical matrix multiplication. Therefore, it is clear that $(\tilde{\frak{a}},\bullet)$ forms an associative, commutative and unital algebra. 
    \item Results of Sec. \ref{S:4.3} and  Lemma \ref{L:ConnFrob} imply the existence of a symmetric bilinear form on this algebra, satisfying the associativity condition of Eq.~\eqref{E:a}. So, we have outlined the structure of a Frobenius algebra on $(\tilde{\frak{a}},\bullet,\langle -,-\rangle)$. 

\item By Equations~\ref{E:Iso} and~\ref{E:bullet}, there exists an isomorphism of algebras 
$(\,  \tilde{\frak{a}},\bullet \, ) \longrightarrow (\,  T_p\F, \circ\,).$ Then, $(\tilde{\frak{a}},\bullet)$ is associative if and only if $(T_p\F, \circ)$ is associative. In particular, given that $(\tilde{\frak{a}},\bullet)$ is associative, this implies that $(T_p\F, \circ)$ is associative. The algebra $(T_p\F, \circ)$ is also commutative (the connection is torsionless). Therefore, the triple $(T_p\F, \circ)$ forms an associative, commutative and unital algebra. This algebra is equipped with a symmetric bilinear form $(-,-)$ given by the Riemannian metric $g$. This bilinear form satisfies the associativity relation, as shown in part (6) of the proof of Proposition~\ref{P:pre-Fro}. So, we have proved that  $(T_p\F, \circ)$ is a Frobenius algebra, as wanted.
\end{enumerate}\end{proof}

\begin{thm}\label{T:Frobenius}
The manifold $\F$ defined in Proposition \ref{P:Matrix} is a Frobenius manifold. 
\end{thm}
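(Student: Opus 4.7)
The plan is to bootstrap from the three ingredients already assembled in the excerpt: the ambient pre-Frobenius structure on $\mathscr{P}$ (Proposition \ref{P:pre-Fro}), the flatness of $\F$ coming from the abelian nature of the Cartan subalgebra $\tilde{\frak{a}}$, and the pointwise Frobenius-algebra structure on $T_p\F$ (Proposition \ref{L:6}). I will then invoke Theorem \ref{T:preF} or equivalently Definition \ref{D:preF+F} to conclude.

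First I would show that $\F = \exp \tilde{\frak{a}}$ inherits a potential pre-Frobenius structure from $\mathscr{P}$. Since $\F$ is totally geodesic in $\mathscr{P}$ (Theorem \ref{T:Hel} and Proposition \ref{P:Matrix}), the torsionless flat affine connection $\con{0}$ on $\mathscr{P}$ restricts to a torsionless flat affine connection on $\F$, verifying axiom (i). The Hessian metric $g$ of Eq.~\eqref{E:Riem}, the rank-three symmetric tensor $A_{ijk} = \partial_i\partial_j\partial_k \Phi$ with $\Phi = \ln \chi(x)$, and the KV multiplication $\circ$ from Definition \ref{P:FA} all restrict to $\F$; the fact that they restrict correctly uses that $T_p\F = \tilde{\frak{a}}$ is closed under $\circ$, which is precisely the content of Proposition \ref{L:6}. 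This yields axioms (ii)--(v) on $\F$ (with the restricted potential $\Phi|_\F$).

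Second I would establish the flatness of $\F$. By part (1) of Theorem \ref{T:Hel}, the Riemann curvature tensor at a point $p \in \mathscr{P}$ is
\[
R(X,Y)Z = -[[X,Y],Z], \quad X,Y,Z \in T_p\mathscr{P}.
\]
Restricting to vector fields tangent to $\F$, one has $X, Y \in T_p\F = \tilde{\frak{a}}$; but $\tilde{\frak{a}}$ is a Cartan subalgebra, hence abelian, so $[X,Y] = 0$ and therefore $R|_{\F} \equiv 0$. Thus $\F$ is a flat locus inside the pre-Frobenius manifold $\mathscr{P}$.

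The conclusion then follows immediately: Theorem \ref{T:preF} tells us that a non-empty flat locus inside a pre-Frobenius manifold satisfies the WDVV/Associativity Equation, hence forms a Frobenius locus. Alternatively, one combines Lemma \ref{L:1.5.1} (a flat Hessian manifold satisfies the Associativity Equation) with Proposition \ref{L:6} (pointwise Frobenius algebra structure on $T_p\F$) and Definition \ref{D:preF+F} to deduce that $\F$ is a Frobenius manifold. The main subtlety, and the one place where care is needed, is to ensure that the KV multiplication $\circ$ defined from the ambient Christoffel symbols actually preserves $T\F$, i.e.\ that $\tilde{\frak{a}}$ is a subalgebra of the ambient KV algebra; this is guaranteed by $\F$ being totally geodesic together with the fact that the diagonal matrices forming $\tilde{\frak{a}}$ close under the Jordan product $X \bullet Y = \tfrac{1}{2}(XY + YX)$, which reduces to ordinary diagonal matrix multiplication. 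Once this compatibility is in hand, the three ingredients assemble and the theorem follows.
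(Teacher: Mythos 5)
Your proposal is correct, and all of its ingredients appear in the paper, but your main line of argument reaches the conclusion by a genuinely different mechanism than the paper's proof. The paper first restricts the pre-Frobenius structure of $\mathscr{P}$ to $\F$ (as you do), and then derives everything from the associativity of the algebra $(T_p\F,\circ)$ established in Proposition~\ref{L:6}: either by manipulating the structure constants $\Gamma^s_{ij}=\tfrac12\sum_l\Phi_{ijl}g^{ls}$ directly until the WDVV equation appears, or by invoking Manin's criterion that a pre-Frobenius manifold is Frobenius iff the pencil of connections is flat, together with the identity $R_{X,Y}(Z)=X\circ(Y\circ Z)-Y\circ(X\circ Z)$, so that associativity \emph{implies} the relevant flatness. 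You instead establish Riemannian flatness of $\F$ first, purely Lie-theoretically: $R(X,Y)Z=-[[X,Y],Z]$ from Theorem~\ref{T:Hel} together with the fact that the Cartan subalgebra $\tilde{\frak{a}}$ is abelian forces $R|_{\F}\equiv 0$ (this is the standard maximal-flat phenomenon, which the paper only records as a remark after Theorem~\ref{T:Hel}), and you then feed this into Theorem~\ref{T:preF} to get WDVV. Your route makes the geometric origin of the flatness transparent and is arguably cleaner; the paper's route stays inside the algebraic framework, exhibits WDVV explicitly from the structure constants, and separately verifies the invariance axiom $\langle X\circ Y,Z\rangle=\langle X,Y\circ Z\rangle$ via Eq.~\eqref{E:bili}. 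Two small points you should make explicit: (1) Theorem~\ref{T:preF} is stated for flatness of the Hessian metric $g=\mathrm{Hess}(\Phi)$ with $\Phi=\ln\chi$, whereas your curvature computation concerns the $G$-invariant symmetric-space metric, so you need the (true, and stated in the proof of Proposition~\ref{P:pre-Fro}) fact that these coincide; and (2) the closure of $T_p\F$ under $\circ$ is not quite ``the content of'' Proposition~\ref{L:6} --- that proposition presupposes closure and asserts the Frobenius-algebra axioms --- rather, closure follows from $\F$ being totally geodesic and $\tilde{\frak{a}}$ being a subalgebra, as you correctly note at the end.
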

\begin{proof}
Let $\F\subset \mathscr{P}$ be the totally geodesic submanifold of the cone $\mathscr{P}$ defined in Proposition~\ref{P:Matrix}. 
By Theorem~\ref{T:main} and Proposition~\ref{P:pre-Fro}, the cone $\mathscr{P}$ has a pre-Frobenius structure. Therefore, $\F$ carries also a pre-Frobenius structure. We show in two ways that $\F$ carries a Frobenius manifold structure. 
\smallskip 

\begin{enumerate}[label=\alph*)]
    \item  One way goes as follows. By Proposition \ref{L:6}, the algebra $(T_p\F,\circ)$ forms a Frobenius algebra. 
    If an algebra $\mathscr{A}$ is commutative and associative,
then the structure constants of $\mathscr{A}$ satisfy the following relations:

\begin{align*}
\text{Associativity} \quad  Eq. ~\eqref{E:Assoc}: & \quad  \Gamma_{ij}^s\Gamma_{sk}^r=\Gamma_{ik}^s\Gamma_{sj}^r,  \\ 
\text{Commutativity} \quad Eq. ~\eqref{E:comm}:& \quad  \Gamma_{ij}^s= \Gamma_{ji}^s.\\ 
\end{align*}

Using the definition of the constant structures $\Gamma_{ij}^s$ (in Definition \ref{P:FA}), the associativity relation from Eq. ~\eqref{E:Assoc} becomes:
\smallskip 

$$\sum_{l,p} \Phi_{ijl}g^{ls}\Phi_{skp}g^{pr}=
\sum_{t,w} \Phi_{ikt}g^{ts}\Phi_{sjw}g^{wr}=$$
$$=\sum_{l,p} \Phi_{jil}g^{ls}\Phi_{ksp}g^{pr}=
\sum_{t,w} \Phi_{kit}g^{ts}\Phi_{jsw}g^{wr}$$
$$=\sum_{l,p} \Phi_{jil}g^{ls}\Phi_{ksp}g^{pr}=\sum_{t,w} 
\Phi_{ikt}g^{ts}\Phi_{jsw}g^{wr}$$
$$=\sum_{l,p} \Phi_{jil}g^{ls}\Phi_{skp}g^{pr}=\sum_{t,w} 
\Phi_{ikt}g^{ts}\Phi_{sjw}g^{wr}.$$

We find that this leads to the WDVV equation. 
\smallskip 

\item The second way, is to show that $\F$ has vanishing curvature. According to \cite[Theorem 1.5]{Man99}, a pre-Frobenius manifold $(\F,g,A)$ is a Frobenius manifold if and only if the pencil of connections on $\F$ is flat.

\item[] Let $R_{X,Y}(Z)$ be the curvature tensor, where $X,Y,Z \in T_p\F$ are vector fields. The following statement holds: $ R_{X,Y}(Z)= X \circ (Y \circ Z)-Y \circ (X \circ Z)$, where $\circ$ is defined as in Eq. \eqref{E:Koszul}.
Therefore, $R_{X,Y}(Z)=0$ if and only if $X \circ (Y \circ Z)=Y \circ (X \circ Z).$

\item[] In other words,  $R_{X,Y}(Z)=0$ if and only if $X \circ (Y \circ Z)=(X \circ Y)\circ  Z$ i.e. the algebra is associative. By Proposition~\ref{L:6} the algebra is associative. Therefore, the curvature tensor vanishes on $\F$.

\item  There exists a symmetric bilinear form given by
Eq. \eqref{E:bili}. On $T_p\F$, this satisfies the associativity type of relation: $\langle X\circ Y,Z\rangle= \langle X,Y\circ Z\rangle,$ where $X,Y,Z\in T_p\F$. This concludes the proof. 
\end{enumerate}
\end{proof}

\subsection{Some remarks}
A Lie algebra $(\mathscr{A},[-,-])$ is said to be {\it Lie associative} if $[[Y,Z],X]=[Y,[Z,X]]$, for given $X,Y,Z\in \mathscr{A}$.
\begin{lem}
Assume $\F=\exp{\tilde{\frak{a}}}$ is a flat totally geodesic submanifold in a symmetric space $\mathscr{P}$, as defined in Proposition \ref{P:Matrix}. Then, the Lie algebra $(\tilde{\frak{a}},[-,-])$ is Lie associative.
\end{lem}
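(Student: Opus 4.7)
The plan is to exploit the fact that in Proposition \ref{P:Matrix} the algebra $\tilde{\frak{a}}$ was explicitly identified as a Cartan subalgebra of $\frak{gl}_n(\K)$ of the form $\tilde{\frak{a}} = cI_n \oplus \frak{a}$, and in each of the four cases $\K \in \{\R,\C,\hH,\oO\}$ the description in Table \ref{T:Cartansubalgebra} is through diagonal matrices. I would therefore first observe that $\tilde{\frak{a}}$ is abelian: for $\K = \R, \C, \hH$ diagonal matrices commute under ordinary matrix multiplication, so the Lie bracket $[X,Y] = XY - YX$ vanishes identically on $\tilde{\frak{a}}$; for $\K = \oO$ the relevant piece involves a Cartan subalgebra of $\frak{g}_2$, but any Cartan subalgebra of the real semisimple Lie algebra $\frak{g}_2$ is abelian (a 2-dimensional toral subalgebra), so the same conclusion holds.

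From this abelianness, the Lie associativity identity $[[Y,Z],X] = [Y,[Z,X]]$ becomes a trivial tautology: both sides reduce to $0$ because the inner brackets $[Y,Z]$ and $[Z,X]$ already vanish in $\tilde{\frak{a}}$. Explicitly,
\begin{equation*}
[[Y,Z],X] = [0,X] = 0 = [Y,0] = [Y,[Z,X]]
\end{equation*}
for every $X,Y,Z \in \tilde{\frak{a}}$, which is precisely the Lie associative property.

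There is essentially no obstacle here; the only point worth stressing in the write-up is the conceptual one, namely that Lie associativity $[X,[Y,Z]] + [Y,[Z,X]] = 0$ is strictly stronger than the Jacobi identity (it forces $[Z,[X,Y]]=0$ when combined with Jacobi), and so in general it would fail; what makes it automatic in our setting is precisely that $\tilde{\frak{a}}$ was constructed as a maximal abelian subalgebra giving rise to the maximally flat totally geodesic submanifold $\F = \exp \tilde{\frak{a}}$. The lemma can thus be read as a consistency check: the flatness of $\F$ (already used in Theorem \ref{T:Frobenius} through the vanishing of the sectional curvature, equivalently the vanishing of $R(X,Y)Z = -[[X,Y],Z]$ on $T_p\F$) is reflected algebraically in $\tilde{\frak{a}}$ being abelian, hence trivially Lie associative.
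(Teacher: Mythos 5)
Your proof is correct, but it takes a genuinely different route from the paper's. The paper does not invoke abelianness of $\tilde{\frak{a}}$ at all: it uses the curvature formula $R_0(X,Y)Z=-[[X,Y],Z]$ from Theorem \ref{T:Hel}, deduces from the flatness of $\F$ that the \emph{triple} bracket $[[X,Y],Z]$ vanishes for $X,Y,Z\in T_p\F$, and then rearranges the Jacobi identity $[[Y,Z],X]+[[Z,X],Y]=-[[X,Y],Z]=0$ to obtain $[[Y,Z],X]=[Y,[Z,X]]$. That argument only needs the weaker hypothesis $[[\tilde{\frak{a}},\tilde{\frak{a}}],\tilde{\frak{a}}]=0$ (the inner bracket $[X,Y]$ lands in $\textgoth{k}$, not in $\tilde{\frak{a}}$, and need not vanish a priori), so it applies verbatim to any flat totally geodesic submanifold of a symmetric space. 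Your argument instead uses the explicit description of $\tilde{\frak{a}}$ as a Cartan subalgebra, hence abelian, which kills both sides of the identity outright; this is shorter and your remark that Lie associativity is strictly stronger than Jacobi and fails generically is a worthwhile conceptual point the paper does not make. One small caution: your phrase ``diagonal matrices commute under ordinary matrix multiplication'' is false for genuinely quaternionic diagonal entries, since the entries themselves need not commute; what saves the $\hH$ case is that the Cartan subalgebra in Table \ref{T:Cartansubalgebra} is presented inside the complex $2n\times 2n$ realization (with $Y=0$ and $X$ complex diagonal), or more simply that a Cartan subalgebra is by definition maximal abelian. With that phrasing tightened, both proofs are sound, and each has its own merit: yours is more elementary, the paper's is more robust to the geometry.
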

\begin{proof}
   By  Theorem \ref{T:Hel}, the curvature tensor $R_0$ evaluated at $T_p\mathscr{P}$ is given by 
\begin{equation}\label{E:courb}
    R_0(X,Y)Z=-[[X,Y],Z], \quad X,Y, Z \in T_p\mathscr{P}.
\end{equation}
By Theorem \ref{T:Frobenius}, we can assume that the totally geodesic submanifold $\F$ is flat. So, restricting attention to $\F$, the left hand side of Eq. \eqref{E:courb} is 0. Thus, we have
\begin{equation}\label{E:0}
0=-[[X,Y],Z],
\end{equation} where $X,Y,Z \in T_p\F$. Now, by Jacobi's identity: 
\[[X,[Y,Z]]+[Y,[Z,X]]+[Z,[X,Y]]=0.\] Rewriting it, one gets $[[Y,Z],X]+[[Z,X],Y]=-[[X,Y],Z]$.
So, inputting this relation in Eq. \eqref{E:0} one gets
$[[Y,Z],X]=[Y,[Z,X]]$ for $X,Y,Z \in T_p\F$. Thus, $(\tilde{\frak{a}},[-,-])$ satisfies the Lie associativity.  \end{proof}

\subsection{Final remark}
The cones discussed in Table \ref{T:table1}  are also examples of Cartan--Hadamard spaces (see \cite{Shi84}, for an introduction). We conclude as follows: 
\begin{cor}\label{T:Final}
Let $\mathscr{P}$ be a non-compact symmetric space from Table~\ref{T:table1}. 
There exists an isometric immersion of 
 a flat torus being a totally geodesic submanifold of $\mathscr{P}$. It carries the structure of a Frobenius manifold; all geodesics of $\mathscr{P}$ lie in that subspace. The homogeneous Cartan--Hadamard space has  negative sectional curvature and carries a pre-Frobenius structure. 
\end{cor}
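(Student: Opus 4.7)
The plan is to assemble the corollary directly from results already established in Sections~5--6; no essentially new ingredient is required. The four assertions to dispatch are: (a) existence of a flat torus $\F$ isometrically immersed in $\mathscr{P}$, (b) its being totally geodesic, (c) its carrying a Frobenius structure, and (d) the pre-Frobenius/Cartan--Hadamard description of the ambient $\mathscr{P}$.

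First I would invoke Proposition~\ref{P:Matrix} to produce the Cartan subalgebra $\tilde{\frak{a}} = cI_n \oplus \frak{a} \subset \frak{gl}_n(\K)$ in the form dictated by $\K$, and set $\F := \exp\tilde{\frak{a}}$. Theorem~\ref{T:Hel} then gives at once that $\F$ is a totally geodesic submanifold of $\mathscr{P}$ with $T_p\F = \tilde{\frak{a}}$. Flatness of $\F$ is an immediate consequence of the curvature formula $R(X,Y)Z = -[[X,Y],Z]$ of Theorem~\ref{T:Hel}(1), since $\tilde{\frak{a}}$ is abelian. The image of the abelian subalgebra under the matrix exponential is precisely the group of invertible diagonal matrices with the appropriate positivity/norm conditions, i.e.\ an algebraic torus; the inclusion is isometric because the Hessian metric of $\mathscr{P}$ restricts to the flat metric inherited from $\tilde{\frak{a}} \simeq \R^n$ in the coordinates $(t_1,\dots,t_n)$ with diagonal entries $\exp(t_i)$.

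Second, Theorem~\ref{T:Frobenius} directly yields (c), so the torus $\F$ carries the Frobenius-manifold structure explicitly constructed there (with potential $\Phi = \ln \chi$, canonical bilinear form $g = \mathrm{Hess}(\Phi)$ and rank-three tensor $A = \partial_i\partial_j\partial_k \Phi$). For claim (d), the cones of Table~\ref{T:table1} are Riemannian symmetric spaces of non-compact type such as $GL_n(\K)/O_n(\K)$; combined with $R(X,Y)Z = -[[X,Y],Z]$ and the positivity of the Killing form on $\ft$, they are simply connected, complete and of non-positive sectional curvature, hence Cartan--Hadamard spaces. The pre-Frobenius structure is then either Proposition~\ref{P:pdsmPreF} (via the elliptic Monge--Amp\`ere route) or Proposition~\ref{P:pre-Fro} (via the Koszul--Vinberg construction).

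The only non-routine point is the clause ``all geodesics of $\mathscr{P}$ lie in that subspace,'' which I would interpret as the totally geodesic property (every geodesic of $\F$ is a geodesic of $\mathscr{P}$) together with the standard fact that every geodesic of a non-compact symmetric space is contained in some maximal flat, any two of which are conjugate under the isotropy action. This point aside, there is no computational obstacle; the entire corollary is a citation-driven assembly, and once the geodesic clause is read in the sense above, every step is a direct appeal to a result already proved in the paper.
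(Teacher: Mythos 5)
Your proposal is correct and follows essentially the same route as the paper's own proof: both are citation-driven assemblies of Proposition~\ref{P:Matrix} (together with Proposition~\ref{P:exist} and Theorem~\ref{T:Hel}) for the flat totally geodesic torus $\F=\exp\tilde{\frak{a}}$, Theorem~\ref{T:Frobenius} for the Frobenius structure, and Proposition~\ref{P:pre-Fro} for the pre-Frobenius Cartan--Hadamard description of the ambient cone, with the paper merely framing the non-positive-curvature discussion via Nomizu's trichotomy where you use the curvature formula $R(X,Y)Z=-[[X,Y],Z]$ directly. If anything, your reading of the clause ``all geodesics of $\mathscr{P}$ lie in that subspace'' (which is false if taken literally for a single flat) as the totally geodesic property plus the standard fact that every geodesic lies in \emph{some} maximal flat, all conjugate under the isotropy action, is more careful than the paper, whose proof does not address that clause at all.
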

    \begin{proof}
By Nomizu~\cite{No54}, every irreducible symmetric space $G/K$ belongs to one the three main classes: euclidean, compact or non-compact. 
\begin{itemize}
        \item  It is euclidean if the curvature is 0. It is therefore isometric to a Euclidean space.
  
    \item   It is compact if the sectional curvature is non-negative (but not identically zero).

      \item  It is non-compact if the sectional curvature is non-positive (but not identically zero).       
   \end{itemize}
The symmetric spaces considered in Table~\ref{T:table1} have non-positive scalar curvature.
We decompose those symmetric spaces into a union of subspaces, where one subspace has flat sectional curvature and the other one has negative sectional curvature. It turns out from Proposition \ref{P:exist} that there exists a totally geodesic submanifold with vanishing curvature. We can refine our statement, by using Proposition \ref{P:Matrix}: those subspaces form a flat torus and are Frobenius manifolds Theorem \ref{T:Frobenius}.

The space $\mathscr{P}$ 
is a homogeneous cone, with negative sectional curvature. This is a homogeneous Cartan--Hadamard cone. It carries the pre-Frobenius structure (see Proposition~\ref{P:pre-Fro}).      
\end{proof}

\section{Conclusion}
 
$\diamond$ By Theorem \ref{T:LG} the LG theory furnishes a Hilbert space which is parametrised by a real Monge--Ampère domain which is  a potential pre-Frobenius domain.  This Hilbert space has the structure of a weighted projective space. Therefore, using the result of \cite{CR} we can show that one can build two mirror dual Calabi--Yau  hypersurfaces which are parametrised by this Monge--Ampère domain (in Theorem \ref{T:FINAL}). Therefore, our approach allows to create a bridge between the LG theory and the mirror symmetry \`a la Kontsevich--Soibelman.

$\diamond$ Our method can be considered as a possible tool for finding new sources of Frobenius manifolds, since we have shown that the Elliptic Monge--Ampère equation and the Associativity Equations relate. By Theorem \ref{T:main} the Elliptic Monge--Ampère manifold is a pre-Frobenius manifold.

$\diamond$ Our study of the LG theory implies considering a toy model formed from cones of positive definite symmetric matrices with coefficients in a division algebra $\mathbb{K}$. These cones turned out to form a Monge--Ampère domain and to form thus a pre-Frobenius domain. Given that these cones parametrise manifolds of probability distributions obeying to Wishart laws (see ~\cite{AnWo,Wi28} and \cite{Co63,Co66,Ja,CoMu72,CoMu76}), this strengthens the result \cite{CoMa} in information geometry. 

$\diamond$ If the cone is real it provides a nice example of a Monge--Ampère manifold parametrizing Calabi--Yau manifolds (torus/abelian variety), illustrating thus  \cite{KoS01} and torus fibrations. 

$\diamond$ Other applications in number theory of this example follow from the works of Minkowski, Siegel~\cite{Sie35,Sie44}, Maass~\cite{Maa}, Piateski--Shapiro~\cite{PS} (and many others).

$\diamond$ Applying the main statement of \cite{Connes} and our results, it follows that if the cone is complex one deduces a direct relation from pre-Frobenius manifolds and Monge--Ampère domains to von Neumann algebras.

\appendix 
\section{Convex symmetric cones}\label{A:1}

\subsubsection{Notations}
Let $\mathscr{P} \subset V$ be a convex cone in an $n$-dimensional vector space $V$, over the real number field. 
$\K$ is a division algebra; $\mathscr{P}_n(\K)$ is the irreducible symmetric cone of $n\times n$  positive definite matrices with coefficients in $\K$. 

\subsubsection{Strictly convex cones} 
In the following we always consider {\it strictly convex cones}. Note that for brevity we simply refer to them as {\it convex cones.} 
Recall some elementary notions (see \cite{FK} for further information). 

\begin{dfn}
Let $V$ be a finite dimensional real vector space. Let $\langle-,-\rangle$ be a non-singular symmetric bilinear form on $V$.
A subset $\mathscr{P} \subset V$ is a convex cone if and only if $x,y \in \mathscr{P}$ and $\lambda,\mu >0$ imply $\lambda x+\mu y \in \mathscr{P}$.
\end{dfn} 

\subsubsection{Homogeneous cones} 
The automorphism group $G(\mathscr{P})$ of an open convex cone $\mathscr{P}$ is defined by 
\[G(\mathscr{P})=\{g\in GL(V)\, |\, g\mathscr{P}=\mathscr{P}\}\]
An element $g\in GL(V)$ belongs to $G(\mathscr{P})$ iff $g\overline{\mathscr{P}}=\overline{\mathscr{P}}$ \cite{FK}. So, $G(\mathscr{P})$ is a closed subgroup of $GL(V)$ and forms a Lie group. 
The cone $\mathscr{P}$ is said to be {\it homogeneous} if $G(\mathscr{P})$ acts transitively upon $\mathscr{P}$.

\smallskip 

\subsubsection{Symmetric cones} 
From homogeneous cones one can construct symmetric convex cones. Let us introduce the definition of an open dual/polar cone. An open dual/polar cone $\mathscr{P}^*$ of an open convex cone is defined by $\mathscr{P}^*=\{y\in V\, |\, \langle x,y \rangle>0,\, \forall\, x\in \overline{\mathscr{P}}\setminus 0 \}$. A homogeneous convex cone $\mathscr{P}$ is symmetric if $\mathscr{P}$ is self-dual i.e. $\mathscr{P}^*=\mathscr{P}$. Note that if $\mathscr{P}$ is homogeneous then so is $\mathscr{P}^*$. 
 
 \begin{rem} 
 If $\mathscr{P}$ is a symmetric open cone in $V$,  then $\mathscr{P}$ is a symmetric Riemann space.    
\end{rem}

\smallskip

\subsubsection{Automorphism group of symmetric cones}  Let us go back to the automorphism group of $\mathscr{P}$. This discussion relies on \cite[Prop I.1.8, Proposition I.1.9]{FK}.

\smallskip 

Let $\mathscr{P}$ be a symmetric cone in $V$.  For any point $a\in \mathscr{P}$ the stabilizer of $a$ in $G(\mathscr{P})$ is given by 
\[G_a=\{g\in G(\mathscr{P})\, |\, ga=a\}.\]

By \cite[Prop I.1.8]{FK}, if $\mathscr{P}$ is a proper open homogeneous convex cone then for any $a$ in $\mathscr{P}$, $G_a$ is compact. Now, if $H$ is a compact subgroup of $G$ then $H\subset G_a$ for some $a$ in $\mathscr{P}$. This means that the groups $G_a$ are all maximal compact subgroups of $G$ and that if $\mathscr{P}$ is homogeneous then all these subgroups are isomorphic. 

By \cite[Proposition I.1.9]{FK}, if $\mathscr{P}$ is a symmetric cone, there exist points $e$ in $\mathscr{P}$ such that $G(\mathscr{P})\cap O(V)\subset G_e$, where $O(V)$ is the orthogonal group of $V$. For every such $e$ one has $G_e=G\cap O(V)$ 

Suppose $\mathscr{P}$ is a convex homogeneous domain in $V$. Assume that
\begin{itemize}
   \item[---]   $G(\mathscr{P})$ is the group of all automorphisms;
   \item[---]   $G_e=K(\mathscr{P})$ is the stability subgroup for some point $x_0\in \mathscr{P}$;
   \item[---]   $T(\mathscr{P})$ is a maximal connected triangular subgroup of $G(\mathscr{P}).$ 
\end{itemize}

\smallskip 

Following \cite[Theorem 1]{Vin}, we have: \[G(\mathscr{P})=K(\mathscr{P})\cdot T(\mathscr{P}),\] where $K(\mathscr{P}) \cap T(\mathscr{P}) = e$ and the group  $T$ acts simply transitively. 

This decomposition on the Lie group side leads naturally to its Lie algebra. Cartan's decomposition for the Lie algebra tells us that $\fg=\textgoth{k} \oplus\ft,$ 

where:

\begin{itemize}
   \item[---]   $\frak{t}$ can be identified with the tangent space of $\mathscr{P}$ at $e$. 

   \item[---]   $\textgoth{k}$ is the Lie algebra associated to $K(\mathscr{P})$
\end{itemize}
 and
\[[\ft,\ft]\subset \textgoth{k},\quad [\textgoth{k},\ft]\subset \ft.\]

\begin{thm}\cite[Theorem 3.3]{Hel}

\begin{enumerate}
\item Let $M$ be a Riemannian globally symmetric space and $p$ is any point in $M$. If $G$ is a Lie transformation group of $M$ (a Lie group) and $K$ is the subgroup of $G$ which leaves $p$ fixed, then $K$ is a compact subgroup of $G$ 
and $G/K$ is analytic diffeomorphic to $M$ under the mapping $gK\to g\cdot p$, $g\in G$. 
\item The mapping $\sigma$ is an involutive automorphism of $G$ such that $K$ lies between the closed subgroup $K_\sigma$ of all fixed points of $\sigma$ and the identity component of $K_{\sigma}$. The group $K$ contains no normal subgroup of $G$ other than $\{e\}$.
\item Let $\fg$ and $\textgoth{k}$ denote the Lie algebras of $G$ and $K$, respectively. Then $\textgoth{k} =\{ X\in \fg: (d\sigma)_eX=X\} $ and if $\ft=\{X\in \fg\, |\, (d\sigma)_eX=-X\}$ we have $\fg=\textgoth{k} \oplus \ft$. Le $\pi$ be the natural mapping $g\to g\cdot p$ of $G$ onto $M$.
Then $(d\pi)_e$ maps $\textgoth{k}$ into $\{0\}$ and $\ft$ isomorphically onto $T_pM$. If $X\in \ft$ then the geodesic emanating from $p$ with tangent vector $(d\pi)_eX$ is given by $\exp{tX}\cdot p$. Moreover, if $Y\in T_pM$, then $(d\exp{tX})_p(Y)$ is the parallel translate of Y along the geodesic. 
\end{enumerate}
\end{thm}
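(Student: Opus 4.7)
The plan is to exploit the symmetric-space structure $\mathscr{P} = G/K$ with Cartan decomposition $\fg = \textgoth{k} \oplus \ft$ (satisfying $[\textgoth{k},\textgoth{k}] \subseteq \textgoth{k}$, $[\textgoth{k},\ft] \subseteq \ft$ and $[\ft,\ft] \subseteq \textgoth{k}$) and the identification $T_p\mathscr{P} \cong \ft$ via the differential at the identity of the orbit map $g \mapsto g\cdot p$. Throughout, $R$ denotes the curvature of the $G$-invariant Riemannian metric on $\mathscr{P}$ (not the flat affine $\con{0}$).

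For part (1), the driving facts are that the geodesic symmetry $s_p$ is an isometry with $(ds_p)_p = -\mathrm{Id}$, forcing $\nabla R \equiv 0$ at $p$ and, by $G$-homogeneity, everywhere; and that for $X \in \ft$ the curve $t \mapsto \exp(tX)\cdot p$ is a geodesic whose parallel transport coincides with the induced isometry $\exp(tX)$. Given $X, Y, Z \in \ft$, I would extend them to Killing vector fields $X^*, Y^*, Z^*$ generated by the one-parameter isometry groups $\exp(tX), \exp(tY), \exp(tZ)$. The bracket identity $[X^*, Y^*] = -[X, Y]^*$, the fact that $[X,Y] \in \textgoth{k}$ (so $([X,Y]^*)_p = 0$ since $\textgoth{k}$ is the kernel of the orbit-map differential at $p$), and the Koszul formula applied to the $G$-invariant metric together yield $(\nabla_{X^*} Y^*)_p = 0$. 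Plugging into $R(X^*, Y^*) Z^* = \nabla_{X^*}\nabla_{Y^*} Z^* - \nabla_{Y^*}\nabla_{X^*} Z^* - \nabla_{[X^*, Y^*]} Z^*$ and evaluating at $p$, the only surviving term is $-\nabla_{[X^*, Y^*]} Z^*$ evaluated at $p$, which unwinds to $-[[X,Y],Z]$ using $[\textgoth{k}, \ft] \subseteq \ft$ to land back in $T_p\mathscr{P}$.

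For the forward direction of part (2), given a Lie triple system $\frak{a} \subseteq \ft$, set $\textgoth{k}' := [\frak{a}, \frak{a}] \subseteq \textgoth{k}$ and $\fg' := \textgoth{k}' \oplus \frak{a}$. Using the hypothesis $[[\frak{a},\frak{a}], \frak{a}] \subseteq \frak{a}$ together with the Jacobi identity applied to three elements of $[\frak{a},\frak{a}]$, one checks that $\fg'$ is closed under the Lie bracket and that $\fg' = \textgoth{k}' \oplus \frak{a}$ is itself a Cartan decomposition. Let $G' \subseteq G$ be the connected Lie subgroup with Lie algebra $\fg'$ and $K' := G' \cap K$. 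Then $\F := G' \cdot p$ inherits a smooth manifold structure from $G'/K'$ with $T_p\F = \frak{a}$, and coincides set-theoretically with $\exp(\frak{a}) \cdot p$. Every geodesic of $\mathscr{P}$ with initial point $p$ and initial velocity $X \in \frak{a}$ is of the form $t \mapsto \exp(tX)\cdot p$ and hence lies in $\F$; by $G'$-transitivity the same holds at every point of $\F$, proving total geodesicity. For the converse, if $\F$ is totally geodesic with $T_p\F = \frak{a}$, the vanishing of the second fundamental form combined with the Gauss equation forces $R(X, Y) Z \in \frak{a}$ for all $X, Y, Z \in \frak{a}$; substituting the formula from part (1) gives $[[\frak{a}, \frak{a}], \frak{a}] \subseteq \frak{a}$, as required.

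The main obstacle is the curvature calculation in part (1): Killing-field extensions of tangent vectors are not covariantly constant, so several intermediate terms in the Koszul formula and in the definition of $R$ must be shown to vanish at $p$ without error in the sign conventions. The cleanest route is to invoke the geodesic symmetry at the outset, which kills all odd-order terms in the Taylor expansion of $R$ at $p$, and then to exploit the reductive splitting $[\ft, \ft] \subseteq \textgoth{k}$ to see that $(\nabla_{X^*} Y^*)_p = 0$, so that only the $[[X,Y],Z]$ contribution arising from $\nabla_{[X^*, Y^*]} Z^*$ survives. Once part (1) is in hand, part (2) is essentially formal manipulation of the subalgebra $\fg'$ and the Gauss equation.
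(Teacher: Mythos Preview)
Your proposal does not address the stated theorem. The statement you were given is the structural theorem quoted from \cite[Theorem 3.3]{Hel} in the appendix: part (1) is the identification $M\cong G/K$ with $K$ compact, part (2) concerns the involutive automorphism $\sigma$ and the inclusions $(K_\sigma)_0\subseteq K\subseteq K_\sigma$, and part (3) is the Cartan decomposition $\fg=\textgoth{k}\oplus\ft$ together with the description of geodesics and parallel transport via $\exp(tX)\cdot p$. None of these three items is the curvature formula $R(X,Y)Z=-[[X,Y],Z]$, and none of them is the Lie-triple-system/totally-geodesic correspondence. What you have written is a sketch of Theorem~\ref{T:Hel} (equivalently Helgason~IV.4.2 and~IV.7.2), not of the appendix theorem.

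As for the paper's own treatment: the paper gives no proof of the quoted Helgason Theorem~3.3 at all---it is stated as a citation and used as background. (Likewise, the paper's proof of Theorem~\ref{T:Hel} is a one-line reference to \cite[Thm~IV.4.2 and Thm~IV.7.2]{Hel}.) So there is nothing in the paper to compare your argument against for the stated result. If the intended target was actually Theorem~\ref{T:Hel}, then your outline is the standard Helgason argument and is essentially correct in strategy, though you should note that you are implicitly \emph{using} part (3) of the appendix theorem (geodesics are $t\mapsto\exp(tX)\cdot p$ and parallel transport is $(d\exp tX)_p$) as an input to your curvature computation, not proving it.
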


\subsubsection{Classification}
Any symmetric cone (i.e. homogeneous and self-dual) $\mathscr{P}$ is in a unique way isomorphic to the direct product of irreducible symmetric cones $\mathscr{P}_i$ (cf. \cite[Proposition III.4.5]{FK}). 
\medskip 

\begin{prop}~\label{P:Vclass}
Each irreducible homogeneous self--dual cone belongs to one
of the following classes:

\smallskip

\begin{table}[ht]
    \centering
    \begin{tabular}{|c|c|l|}
  \hline
Nb & Symbol & Irreducible symmetric cones \\
 \hline
 1. &      $ \mathscr{P}_n(\R)$ &  Cone of $n \times n$ positive definite symmetric real matrices. \\
     &  & \\
      
    2.  &      $ \mathscr{P}_n(\C)$ &  Cone of $n \times n$ positive definite self-adjoint complex matrices. \\
&  & \\
       3.  &    $ \mathscr{P}_n(\hH)$ &  Cone of $n \times n$ positive definite self-adjoint quaternionic matrices. \\
           &   & \\
        4. &   $ \mathscr{P}_3(\oO)$ & Cone of $3 \times 3$  positive definite self-adjoint octavic matrices. \\
           &   & \\
    5. &    $\Lambda_n$    & Lorentz cone  given by $x_0>\sqrt{\sum_{i=1}^n x_i^2}$ (spherical cone). \\
        &   & \\
       \hline
    \end{tabular}
    \caption{Irreducible symmetric cones}
    \label{tab:cones}
\end{table}
\end{prop}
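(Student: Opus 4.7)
The plan is to reduce the claim to the Jordan--von Neumann--Wigner classification of simple Euclidean Jordan algebras through the Koecher--Vinberg correspondence between symmetric cones and formally real Jordan algebras. First, I would attach to an arbitrary irreducible symmetric cone $\mathscr{P}\subset V$ a Euclidean Jordan algebra structure $(V,\bullet)$ whose unit is any point $e\in\mathscr{P}$ fixed by the isotropy subgroup $K(\mathscr{P})$. Following the construction already sketched in Sec.~\ref{S:4.3}, the product $\bullet$ is transported from the multiplication $\circ$ on $T_e\mathscr{P}$ via the isomorphism $dF$ of Eq.~\eqref{E:Iso}, equivalently via the quadratic representation $P(x)$ coming from the differentiated simply transitive action of $T(\mathscr{P})$ on $\mathscr{P}$.

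Next, I would verify that $(V,\bullet)$ is Euclidean: commutativity and unitality are immediate from the definition, the Jordan identity $x\bullet(x^{2}\bullet y)=x^{2}\bullet(x\bullet y)$ follows from the $K(\mathscr{P})$-invariance of $\bullet$ together with the self-duality of $\mathscr{P}$, and formal reality ($x_1^{2}+\cdots+x_k^{2}=0\Rightarrow x_i=0$) is equivalent to the positive-definiteness of the trace form $\tau(x,y)=\mathrm{Tr}\,L(x\bullet y)$, itself forced by the self-duality of $\mathscr{P}$ with respect to $\tau$. In this correspondence $\mathscr{P}$ is recovered as the interior of the cone of squares, and, crucially, $\mathscr{P}$ is irreducible (does not split as a direct product of symmetric cones) if and only if $(V,\bullet)$ is simple.

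With the correspondence in hand, I would invoke the Jordan--von Neumann--Wigner theorem (1934): every simple Euclidean Jordan algebra is isomorphic to one of $\mathrm{Sym}_n(\R)$, $\mathrm{Herm}_n(\C)$, $\mathrm{Herm}_n(\hH)$, $\mathrm{Herm}_3(\oO)$ (where the restriction $n=3$ is forced because the Jordan identity fails in $\mathrm{Herm}_n(\oO)$ for $n\geq 4$ due to non-associativity of $\oO$), or the spin factor $\R\oplus\R^{n}$ with product $(s,v)\bullet(t,w)=(st+\langle v,w\rangle,\, sw+tv)$. Taking the open cone of positive invertible squares in each case reproduces precisely the five families $\mathscr{P}_n(\R)$, $\mathscr{P}_n(\C)$, $\mathscr{P}_n(\hH)$, $\mathscr{P}_3(\oO)$ and $\Lambda_n$ listed in the table.

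The main obstacle is the Koecher--Vinberg direction: starting from purely metric and group-theoretic data (self-duality and homogeneity of $\mathscr{P}$, together with the Cartan decomposition $\fg=\textgoth{k}\oplus\ft$ recalled in the appendix), one must recover an honest algebraic Jordan structure on $V$, and in particular verify the Jordan identity. Once this is achieved, the proposition becomes a direct translation of the Jordan--von Neumann--Wigner classification, whose own nontrivial content is the exclusion of octavic Hermitian algebras in dimension $\geq 4$; the spin-factor case matches the Lorentz cone by direct inspection of squares.
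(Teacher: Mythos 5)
Your proposal is correct and follows exactly the route that the paper itself relies on: the paper states this proposition without proof, citing \cite{FK} and the Jordan--von Neumann--Wigner representation theorem, and your outline (Koecher--Vinberg correspondence between irreducible symmetric cones and simple Euclidean Jordan algebras, followed by the 1934 classification, with the spin factor yielding the Lorentz cone $\Lambda_n$) is precisely the standard argument behind those citations. You correctly identify the one genuinely hard step --- extracting the Jordan identity from self-duality and homogeneity --- which is carried out in \cite[Ch.~III]{FK}; nothing in your sketch would fail.
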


 \subsubsection{Symmetric cones}\label{S:3.1}
The cones of positive definite quadratic forms are non-compact symmetric spaces. A {\it symmetric space} is a Riemannian space, which can be written as the quotient of Lie groups $G/K$, where $G$ is a connected Lie group with an involutive automorphism whose fixed point set is essentially the compact subgroup $K\subset G.$ 

The pair $(G,K)$ is a {\it symmetric pair} provided that there exists an involution $s\in G$, such that  $(K_{s})_{0} \subset K \subset K_{s}$, where $K_{s}$ is the set of fixed points of $s$ and $(K_{s})_{0}$ is the identity component of $K_{s}$. See  \cite[Chp.IV, paragraphs 3, 4 ]{Hel} for a detailed exposition.
\subsubsection{Classification Table}
One identifies the space of  $n\times n$ symmetric (resp. hermitian) positive definite matrices over a real division algebra with the following non-compact symmetric spaces $GL_n(\R)/O_n$ (resp. $GL_n(\C)/U_n$, $GL_n(\hH)/Sp_n$ $GL_3 (\oO)/F_4$) (see \cite[p.97]{FK}). This is summarised in the following classification table. 
\begin{table}[ht]
    \centering
\begin{tabular}{|c|c|c|c|c|c|}
     \hline
Cone  & $G/K$ & $T_p\mathscr{P}_n$  & $\frak{g}$ & $\frak{k}$  \\
 \hline
 $\mathscr{P}_n(\R)$ & $GL_n(\R)/O_n $& $Sym(n,\R)$ & $ \frak{sl}(n,\R)\oplus \R$ & $\frak{o}$(n) \\
 $\mathscr{P}_n(\C)$ &  $GL_n(\C)/U_n$ &$Herm(n,\C)$  &  $ \frak{sl}(n,\C)\oplus \R$ & $\frak{su}$(n)  \\
 $\mathscr{P}_n(\hH)$ & $GL_n(\hH)/Sp_n$ & $Herm(n,\hH)$ &  $\frak{sl}(m,\hH)\oplus \R$ & $\frak{su}(n,\hH$) \\
 $\mathscr{P}_3(\oO)$ & $GL_3 (\oO)/F_4$  &$ Herm(3,\oO)$ & $ \frak{e}_{(-26)}\oplus \R$ & $\frak{f}_4$ \\
  \hline
\end{tabular}
\caption{}\label{T:table1}
\end{table}

To clarify the notations:
\begin{itemize}
 \item $T_p\mathscr{P}_n$ is the tangent space to the cone at a point $p$ in $\mathscr{P}_n$.
\item  $Sym(n,\mathbb{K})$ stands for the space of $n\times n$ symmetric matrices defined over $\mathbb{K}$;
\item  $Herm(n,\mathbb{K})$ denotes the space of $n\times n$ self-adjoint matrices  defined over $\mathbb{K}$; 
 \item $\frak{g}$ is the Lie algebra associated to $G$;
 \item $\frak{k}$  is the Lie algebra associated to $K$;
\end{itemize}

The tangent space to $\mathscr{P}$ at a point carries a Jordan algebra structure. We recall this: 
\begin{table}[ht]
    \centering
    \begin{tabular}{|c|c|c|c|}
     \hline
    &&& \\
 Irreducible  & Formally real simple  & $dim \cJ $ & rank $\cJ $  \\
 symmetric cone & Jordan algebras $\cJ$ & & \\
& && \\
 \hline
& && \\
$ \mathscr{P}_n(\R)$  &  Jordan algebra of $n\times n$ self-adjoint real matrices & $\frac{1}{2}n(n+1)$ & $ n$  \\

& && \\
    $ \mathscr{P}_n(\C)$      &  Jordan algebra of $n\times n$  self-adjoint complex matrices & $n^2$ & $ n$    \\
   & && \\
    
    $ \mathscr{P}_n(\hH)$     &   Jordan algebra of $n\times n$  self-adjoint quaternionic matrices & $n(2n-1)$ & $n$   \\
  & && \\
    
     $ \mathscr{P}_3(\oO)$    &  Jordan algebra of $3\times 3$ self-adjoint octonionic matrices: &   27 & 3   \\
       &  Albert algebra. &  &\\
      & && \\
      \hline
    \end{tabular}
    \caption{Simple formally real Jordan algebras}
    \label{tab:Jordan}
\end{table}

\subsubsection{Symmetric bilinear forms}
It is a well known fact that if $G$ is semi-simple, the Killing bilinear form is non-degenerate on $\fg$. The symmetric bilinear form is given by:  
$$\langle X,Y\rangle=-Tr(ad X\, ad Y),$$ where $ad\, Z(\xi)=[Z,\xi]$ and $Z \in \fg$. Therefore, we may state the following:

\begin{prop}
Let  $\mathscr{P}$ be a cone listed in Table \ref{T:table1}. Then, this irreducible cone comes equipped with a $G$-invariant metric and with a symmetric bilinear
form given by 
\begin{equation}\label{E:bili}
    \langle X,Y\rangle=\Re\, Tr(XY),
\end{equation} where $X,Y\in T_p(Gl_n(\K)/K)\cong \ft\subset \fg$,  where $Tr(\cdot)$ stands for the trace operator and $\Re$ is the real part.
\end{prop}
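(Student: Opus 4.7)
The plan is to construct the claimed bilinear form directly on $\ft$ and then promote it to a $G$-invariant Riemannian metric on $\mathscr{P} = G/K$ via the homogeneous-space construction. Under the Cartan decomposition $\fg = \textgoth{k} \oplus \ft$ recorded in Table~\ref{T:table1}, the subspace $\ft$ is the $(-1)$-eigenspace of the Cartan involution $X \mapsto -X^{*}$, hence consists of symmetric (resp.\ Hermitian) matrices over $\K$. Under the natural identification $T_p\mathscr{P} \cong \ft$ coming from $d\pi_{e}$, where $\pi : G \to G/K$ is the projection, the proposed form $\langle X,Y\rangle = \Re\,\mathrm{Tr}(XY)$ is an expression involving only self-adjoint matrices, so it makes sense to evaluate it on tangent vectors at $p$.

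The first step is to check that $\langle -,-\rangle$ is a well-defined, symmetric, positive-definite bilinear form on $\ft$. Symmetry follows from $\mathrm{Tr}(XY) = \mathrm{Tr}(YX)$ (which holds in the associative cases $\K = \R,\C,\hH$ after taking real parts, and in the Albert-algebra case via the Jordan trace form recorded in Table~\ref{tab:Jordan}). Non-degeneracy and positive-definiteness follow by diagonalising $X \in \ft$ (possible since $X$ is self-adjoint), which yields real eigenvalues $\lambda_1,\dots,\lambda_n$; then $\langle X,X\rangle = \sum_i \lambda_i^2 \geq 0$ with equality iff $X = 0$. This confirms that $\langle -,-\rangle$ defines an inner product on $\ft$.

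The second step is to upgrade this to a $G$-invariant metric. I would first check $K$-invariance at the basepoint: for $k \in K$, the adjoint action $\mathrm{Ad}(k)$ maps $\ft$ to $\ft$, and since $K \subset G(\mathscr{P}) \cap O(V)$ consists of isometries preserving the trace pairing, cyclicity of $\mathrm{Tr}$ in the associative cases gives
\begin{equation*}
\Re\,\mathrm{Tr}\bigl((kXk^{-1})(kYk^{-1})\bigr) \;=\; \Re\,\mathrm{Tr}(kXYk^{-1}) \;=\; \Re\,\mathrm{Tr}(XY),
\end{equation*}
while in the octonionic case the group $F_4$ acts precisely as the automorphism group of the Albert algebra and preserves its trace form by construction. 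With $\mathrm{Ad}(K)$-invariance in hand, the standard homogeneous-space recipe defines a $G$-invariant metric on $\mathscr{P} = G/K$: at a point $g\cdot p$, declare $d(L_g)_p$ to be an isometry onto $T_{g\cdot p}\mathscr{P}$, so the pullback of the metric at $g \cdot p$ to $\ft$ equals $\langle -,-\rangle$. This is well-defined precisely because replacing $g$ by $gk$ changes the identification by $\mathrm{Ad}(k^{-1})$, which preserves the form.

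Finally, I would remark that on the semisimple factor $\frak{sl}_n(\K) \subset \fg$ this form agrees up to a positive constant with the Killing form $-\mathrm{Tr}(\mathrm{ad}\,X\,\mathrm{ad}\,Y)$ restricted to $\ft$, by the classical computation relating the Killing form of $\frak{sl}_n$ to the trace form; the center $\R \cdot I$ contributes the remaining piece, and the whole thing remains positive-definite on $\ft$ since $\ft$ lies entirely in the non-compact part of the decomposition. The main subtlety is the octonionic case $\mathscr{P}_3(\oO)$: non-associativity prevents a naive cyclic-trace argument, and one must instead invoke the Jordan-algebraic trace on $Herm(3,\oO)$ together with the fact that $F_4 = \mathrm{Aut}(\cJ)$ preserves it; once this replacement is made, the rest of the argument goes through uniformly across all rows of Table~\ref{T:table1}.
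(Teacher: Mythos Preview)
Your argument is correct and considerably more detailed than what the paper actually does. The paper's own proof is a single sentence: it observes that the statement follows from the existence of the Killing form and cites \cite[p.~46, Proposition~III.1.5]{FK} for the precise formulation. In other words, the paper treats this as a known structural fact about symmetric cones and simply imports it from Faraut--Koranyi.

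By contrast, you give a direct construction: you verify by hand that $\Re\,\mathrm{Tr}(XY)$ is symmetric and positive-definite on $\ft$, then establish $\mathrm{Ad}(K)$-invariance (via cyclicity of trace in the associative cases and via the automorphism property of $F_4$ on the Albert algebra in the octonionic case), and finally propagate it to all of $G/K$ by left translation. You relegate the Killing-form relationship to a closing remark, whereas for the paper it is the entire argument. Your approach has the advantage of being self-contained and of making the octonionic subtlety explicit; the paper's approach is shorter but presupposes that the reader is willing to extract the statement from \cite{FK}.
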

\begin{proof}
This statement follows from the existence of the Killing form. See \cite[p. 46, Proposition III.1.5]{FK} for a precise statement. 
\end{proof}

\section{Landau--Ginzburg model}\label{S:LG}
We compare this construction with the LG model. Mathematically the LG model is summarised as a non-compact K\"ahler manifold and a holomorphic Morse function.  To improve the mathematical understanding, we propose to recall additional information which are present in the original construction. 

\subsubsection{}
 
In 1957, Bardeen, Cooper, and Schrieffer (BCS) introduced the foundation for a quantum theory of superconductivity. This gave the BCS theory.
\begin{itemize}
    \item There exists an important object:  the (BCS) {\it wave function} $\psi$. It is a function of two-particle system $k \uparrow$ and -$k \downarrow$ ($\uparrow$ and $\downarrow$ stand for the spin). 
    \item In the BCS state, one-particle orbitals are occupied in pairs: if an orbital with wave vector $k$ and spin up is occupied, the orbital with wave vector $k$ and spin down is also occupied. If $k \uparrow$ is vacant, then so is $ k \downarrow$ vacant.
    \end{itemize}
The pairs are called {\it Cooper pairs} and they have spin zero. 

For a complete set of states of a two-electron system satisfying periodic boundary conditions in a cube of unit volume, take the plane wave product functions $\varphi(k_1,k_2;r_1,r_2)=\exp(\imath(k_1\cdot r_1+k_2\cdot r_2))$. It is assumed that the electrons are of opposite spin.
So, the plane wave can be expressed as $\varphi(K,k;R,r)=\exp(\imath K\cdot R)\exp(\imath k\cdot r),$ where we have:

 the barycentre $R$ of $r_1$ and $r_2$, $R=\frac{1}{2}(r_1+r_2)$ and we write $r=r_1-r_2$; Reciprocally,
$K=k_1+k_2$ and $k=\frac{1}{2}(k_1-k_2)$.

\subsubsection{}
Let us introduce the order parameter $\psi(r)$ such that  $\overline{\psi(r)}\psi(r)=|\psi(r)|^2=n(r)$. This is a complex valued function. It corresponds to the local concentration of superconducting electrons. It represents a condensed wave function which is a single quantum state occupied by a large fraction of Cooper pairs. In other words, this generates the {\it density of probability} of finding Cooper pairs in a given domain.

\subsubsection{}
 We explain how the holomorphic Morse function mentioned above is obtained. It corresponds to the free energy density, expressed as a function of $\psi(r)$ as follows: 

\begin{equation}
F_s(r)=F_0-\alpha |\psi|
^2 +\frac{\beta}{2}|\psi|^4 + \frac{1}{2m}|-\imath \hbar  \nabla- q\frac{A}{c}\psi |^2 - \int_0^{B_a} M \cdot dB_a
\end{equation}
where:
\begin{itemize}
\item $\alpha, \beta,m$ are positive constants;
\item $F_0$ is the free energy density of the normal state;
\item $- \alpha |\psi |
^2 +\frac{\beta}{2}|\psi|^4$ is a Landau form for the expansion of free energy vanishing at a second-order phase transition;
\item the term in $| \nabla |\psi |^2$ represents an increase in energy caused by a spatial variation of the order parameter. 
\item the term  $ \int_0^{B_a} M \cdot dB_a$ represents the increase in the superconducting free energy.
\end{itemize}
\subsubsection{Landau--Ginzburg equation}
In order to obtain the Landau--Ginzburg equation it is necessary to minimise the total free energy $\int F_s(r)dV$ w.r.t $\psi(r)$.
We get 
\[\delta F_s(r)=[-\alpha \psi + \beta |\psi |^2 \psi +\frac{1}{2m}(-\imath \hbar \nabla-q\frac{A}{c})\delta\overline{\psi}+c.c.]\]
If $\delta \overline{\psi}$ vanishes on the boundaries, it follows that 
$\int \delta F_s(r)dV=\int dV \delta\overline{\psi} [-\alpha \psi + \beta |\psi |^2\psi +\frac{1}{2m}(-\imath \hbar \nabla-q\frac{A}{c})]+c.c$.
The integral vanishes if the term in the bracket is null. That is if
\begin{equation}
    [\frac{1}{2m}(-\imath \hbar \nabla-q\frac{A}{c})^2 -\alpha \psi + \beta | \psi |^2]\psi=0
\end{equation} 
This last equation is precisely the LG equation. 

By minimising the free energy $F_s$ wrt $\delta A$ one gets a gauge-invariant expression for the super current flux:
\[j_s(r)=\frac{\imath g\hbar}{2m}[\overline{\psi},\nabla\psi] -\frac{q^2}{mc}|\psi|^2A.\]

\vfill\eject

\end{document}